\documentclass[journal]{IEEEtran}

\usepackage{amsmath,amssymb,bm,bbm,mathrsfs,amscd}
\usepackage{color}
\usepackage{amsthm}
\usepackage{dsfont}
\usepackage{graphicx}
\usepackage{epsfig}
\usepackage{subfigure}
\usepackage{algorithm}
\usepackage[noend]{algpseudocode}
\usepackage{tikz}
\usepackage{enumitem}
\usepackage{stackrel}
\usepackage{pifont}
\usepackage{hhline}
\usepackage{booktabs}
\usepackage{balance}

\newtheorem{theorem}{Theorem}
\newtheorem{definition}{Definition}
\newtheorem{proposition}{Proposition}
\newtheorem{lemma}{Lemma}
\newtheorem{corollary}{Corollary}
\newtheorem{remark}{Remark}

\newtheorem{assumption}{Assumption}


\newcommand{\RR}{{\mathbb{R}}}
\newcommand{\NN}{{\mathbb{N}}}
\newcommand{\EE}{{\mathbb{E}}}
\newcommand{\PP}{{\mathbb{P}}}
\newcommand{\JJ}{{\mathbb{J}}}
\newcommand{\FF}{{\mathbb{F}}}

\newcommand{\mc}{\mathcal}
\newcommand{\norm}[1]{\|#1\|}
\newcommand{\normsq}[1]{\|#1t\|^2}

\newcommand{\EEk}[1]{\EE\left[#1|\mc F_k\right]}
\newcommand{\EEx}[1]{\EE\left[#1\right]}
\newcommand{\op}{\operatorname}

\newcommand{\prox}{\operatorname{prox}}

\newcommand{\bs}{\boldsymbol}
\newcommand{\fineass}{\hfill\small$\square$}
\newcommand{\cmark}{\ding{51}}%
\newcommand{\xmark}{\ding{55}}%

\DeclareSymbolFont{myletters}{OML}{ztmcm}{m}{it}
\DeclareMathSymbol{\upomega}{\mathord}{myletters}{"21}

%
\ifCLASSINFOpdf
\else
\fi
%
%

\hyphenation{op-tical net-works semi-conduc-tor}

\begin{document}

\title{Stochastic generalized Nash equilibrium seeking in merely monotone games}

\author{Barbara Franci and Sergio Grammatico
\thanks{The authors are in the Delft Center for System and Control, TU Delft, The Netherlands. E-mail addresses:
        {\tt\small \{b.franci-1, s.grammatico\}@tudelft.nl}.}
\thanks{This work was partially supported by NWO under research projects OMEGA (613.001.702) and P2P-TALES (647.003.003), and by the ERC under research project COSMOS (802348).}}

\markboth{Journal of \LaTeX\ Class Files,~Vol.~14, No.~8, August~2015}%
{Shell \MakeLowercase{\textit{et al.}}: Bare Demo of IEEEtran.cls for IEEE Journals}

\maketitle

\begin{abstract}
We solve the stochastic generalized Nash equilibrium (SGNE) problem in merely monotone games with expected value cost functions. 
Specifically, we present the first distributed SGNE seeking algorithm for monotone games that requires one proximal computation (e.g., one projection step) and one pseudogradient evaluation per iteration.

Our main contribution is to extend the relaxed forward--backward operator splitting by Malitsky (Mathematical Programming, 2019) to the stochastic case and in turn to show almost sure convergence to a SGNE when the expected value of the pseudogradient is approximated by the average over a number of random samples.



\end{abstract}

\begin{IEEEkeywords}
Stochastic generalized Nash equilibrium problems, stochastic variational inequalities.
\end{IEEEkeywords}

\IEEEpeerreviewmaketitle

\section{Introduction}

\IEEEPARstart{I}{n} a generalized Nash equilibrium problem (GNEP), some agents interact with the aim of minimizing their individual cost functions under some joint feasibility constraints. Due to the presence of the shared constraints, computing a GNE is usually hard. Despite this challenge, GNEPs has been studied extensively within the system and control community, for their wide applicability, e.g., in energy markets \cite{yi2019,kulkarni2012,pavel2019,kanzow2019,belgioioso2017}.

Unfortunately, the stochastic counterpart of GNEP is not studied as much \cite{ravat2011,koshal2013,yu2017,lei2020sync}. 
A stochastic GNEP (SGNEP) is a constrained equilibrium problem where the cost functions are expected value functions. Such problems arise when there is some uncertainty, expressed through a random variable with an unknown distribution. 
For instance, networked Cournot games with market capacity constraints and uncertainty in the demand can be modelled as SGNEPs \cite{demiguel2009,abada2013}.
Other examples arise in transportation systems \cite{watling2006}, and in electricity markets \cite{henrion2007}. 

If the random variable is known, the expected value formulation can be solved with a standard technique for the deterministic counterpart.
In fact, one possible approach for SGNEPs is to recast the problem as a stochastic variational inequality (SVI) through the use of the Karush-Kuhn-Tucker conditions. Then, the problem can be written as a monotone inclusion and solved via operator splitting techniques. To find a zero of the resulting operator, we propose a stochastic relaxed forward-backward (SRFB) algorithm. Our iterations are the stochastic counterpart of the golden ratio algorithm \cite{malitsky2019} for deterministic variational inequalities, which reduces to a stochastic relaxation of a forward-backward algorithm when applied to non-generalized Nash equilibrium problems.

Besides the shared constraints, the additional difficulty in \textit{stochastic} GNEPs is that the pseudogradient mapping is usually not directly accessible, for instance because the expected value is hard to compute. For this reason, often the search for a solution of a SVI relies on samples of the random variable.
Depending on the number of samples, there are two main methodologies available: stochastic approximation (SA) and sample average approximation (SAA). In the SA scheme \cite{robbins1951}, each agent samples one or a finite number of realizations of the random variable. While it can be computationally light, it may also require stronger assumptions on the mappings and on the parameters involved \cite{koshal2013,yousefian2017,yousefian2014}. To weaken the assumptions, it is often used in combination with the so-called variance reduction \cite{staudigl2019, iusem2017}, taking the average over an increasing number of samples. This approach, although it may be computationally costly, is used, for instance, in machine learning problems, where there is a huge number of data available. If the average is taken over an infinite number of samples instead, we have the SAA scheme \cite{shapiro2003}.

Independently of the approximation scheme, it is desirable to obtain distributed iterations, where each agent knows only its cost function and constraints \cite{yi2019,franci2019ecc}. In a full-decision information setting, the agents have access to the decisions of the other agents that affect their cost functions \cite{yi2019}, while in a partial-decision information setting the shared information is even more limited \cite{pavel2019}. In both cases, the key aspect is that the agents can communicate without the need for a central coordinator. Alternatively, a payoff-based information setup has been considered in \cite{tatarenko2019, bravo2018}, where the agents have access to the values of their own cost functions.

Besides being distributed, an algorithm for SGNEPs should converge under mild monotonicity assumptions and it should be relatively fast. For SVIs, there exist several methods that may be used for SGNEPs. Among others, one can consider the stochastic preconditioned forward--backward algorithm (SpFB) \cite{franci2019ecc,franci2019} for its convergence speed and low computational cost. The downside of this algorithm is that the pseudogradient mapping must be (monotone and) cocoercive \cite{rosasco2016,franci2019}, strongly monotone \cite{franci2019ecc,lei2020} or satisfy the variational stability \cite{mertikopoulos2019}. Similarly, one can consider the stochastic projected reflected gradient scheme (SPRG) \cite{cui2016,cui2019} that is fast but requires the weak sharpness property (implied by cocoercivity) which is, however, hard to check on the problem data. Nonetheless, weakening the assumption on the pseudogradient to mere monotonicity translates into having computationally expensive algorithms. In this case, one could apply the extragradient (EG) scheme \cite{iusem2017,kannan2019} with two projection steps per iteration or the forward--backward--forward (FBF) algorithm \cite{staudigl2019} that has one projection but two evaluations of the pseudogradient for each iteration. Recently, the stochastic subgradient EG (SSE) algorithm have been considered, that use one proximal step, but still two computations of the approximated pseudogradient \cite{cui2019}.
These considerations are summarized in Table \ref{table_algo}, where we consider variance-reduced schemes with \textit{fixed step sizes} for SGNEPs in comparison with our proposed SRFB algorithm. 
Essentially, for merely monotone games, our SRFB algorithm is the only one to perform one proximal step and one stochastic approximation of the pseudogradient mapping with fixed step size.
\begin{table}[h]
\begin{center}
\begin{tabular}{lcccccc}
\toprule
        & SFBF & SEG  & SSE  & SPRG & SpFB & SRFB  \\ 
        &  \cite{staudigl2019} &  \cite{iusem2017} &  \cite{cui2019}  &  \cite{cui2019} &  \cite{franci2019}& \\
\midrule
\textsc{Mon.}   &   \cmark  &  \cmark & \cmark & \xmark   &    \xmark  &   \cmark \\\midrule

\# $\mathrm{prox}$   & 1   & 2  & 1& 1 & 1    & 1\\ \midrule
\# $F$    & 2   & 2  & 2 &1  & 1    & 1\\ 
\bottomrule
\end{tabular}
\end{center}
\caption{The algorithms for SGNEPs that converge under only monotonicity (Mon.) are marked with \cmark. \# prox and \# F indicate the number of proximal steps and the number of computations of the stochastically approximated pseudogradient per iteration, respectively. All these algorithms use the variance reduction and a fixed step size.
}\label{table_algo}
\end{table}\vspace{-.6cm}

Another option that uses one projection and one computation of the pseudogradient is the iterative Tikhonov regularization \cite{koshal2013} which however is not proven to converge with variance reduction in SGNEPs and uses vanishing step sizes and vanishing regularization coefficients. Other algorithms have been proposed for saddle points problems (without coupling constraints) \cite{mokhtari2019}, convergent in the stochastic case if the mapping is strongly monotone \cite{hsieh2019,mokhtari2020}. 

In light of the above considerations, our main contributions in this paper are summarized next:
\begin{itemize}
\item In the context of (non-strictly/strongly monotone, non-cocoercive) monotone stochastic generalized Nash equilibrium problems, we propose the first distributed algorithm with a single proximal computation (e.g., projection) and a single stochastic approximation of the pseudogradient per iteration (Section \ref{sec_algo_sgnep}).
\item We show that our algorithm converges almost surely to a stochastic generalized Nash equilibrium under monotonicity of the pseudogradient with the SA scheme and the variance reduction (Section \ref{sec_conv_vr}).
\item For the stochastic non-generalized Nash equilibrium problem, we show convergence with and without the variance reduction and under several variants of monotonicity (Section \ref{sec_snep}). 
\end{itemize}

We emphasize that, unlike \cite{koshal2013,yu2017,franci2019}, we do not assume that the pseudogradient mapping is strictly/strongly monotone nor cocoercive or similar.


\section{Notation and preliminaries}

\subsection{Notation} Let $\RR$ indicate the set of real numbers and let $\bar\RR=\RR\cup\{+\infty\}$.
$\langle\cdot,\cdot\rangle:\RR^n\times\RR^n\to\RR$ denotes the standard inner product and $\norm{\cdot}$ represents the associated euclidean norm. We indicate that a matrix $A$ is positive definite, i.e., $x^\top Ax>0$, with $A\succ0$. Given a symmetric $\Phi\succ0$, denote the $\Phi$-induced inner product, $\langle x, y\rangle_{\Phi}=\langle \Phi x, y\rangle$. The associated $\Phi$-induced norm, $\norm{\cdot}_{\Phi}$, is defined as $\norm{x}_{\Phi}=\sqrt{\langle \Phi x, x\rangle}$. 
$A\otimes B$ indicates the Kronecker product between matrices $A$ and $B$. ${\bf{0}}_m$ indicates the vector with $m$ entries all equal to $0$. Given $N$ vectors $x_{1}, \ldots, x_{N} \in \RR^{n}$, $\boldsymbol{x} :=\op{col}\left(x_{1}, \dots, x_{N}\right)=\left[x_{1}^{\top}, \dots, x_{N}^{\top}\right]^{\top}.$

$J_F=(\op{Id}+F)^{-1}$ is the resolvent of the operator $F:\RR^n\to\RR^n$ and $\op{Id}$ indicates the identity operator. The set of fixed points of the operator $F$ is $\op{fix}F=\{x\in\RR^n:x=F(x)\}$. 
For a closed set $C \subseteq \RR^{n},$ the mapping $\op{proj}_{C} : \RR^{n} \to C$ denotes the projection onto $C$, i.e., $\op{proj}_{C}(x)=\op{argmin}_{y \in C}\|y-x\|$. The residual mapping is, in general, defined as $\op{res}(x^k)=\norm{x^k-\op{proj}_{C}(x^k-F(x^k))}.$ Let $g$ be a proper, lower semi-continuous, convex function. We denote the subdifferential as the maximal monotone operator $\partial g(x)=\{u\in\Omega \mid (\forall y\in\Omega)\langle y-x,u\rangle+g(x)\leq g(y)\}$. The proximal operator is defined as $\prox_{g}(v)=\op{argmin}_{u\in\Omega}\{g(u)+\tfrac{1}{2}\norm{u-v}^{2}_{}\}=J_{\partial g}(v)$.
$\iota_C$ is the indicator function of the set C, that is, $\iota_C(x)=1$ if $x\in C$ and $\iota_C(x)=0$ otherwise. The set-valued mapping $\mathrm{N}_{C} : \RR^{n} \to \RR^{n}$ denotes the normal cone operator for the the set $C$ , i.e., $\mathrm{N}_{C}(x)=\varnothing$ if $x \notin C,\left\{v \in \RR^{n} | \sup _{z \in C} v^{\top}(z-x) \leq 0\right\}$ otherwise.
Given two sets $A$ and $B$, with a slight abuse of notation, we indicate with $\op{col}(A,B)$ or $\left[\begin{smallmatrix}A\\B\end{smallmatrix}\right]$ the Cartesian product $A\times B$. This notation is common in (S)GNEPs \cite{yi2019}.

\subsection{Operator theory}
Let us collect some notions on properties of operators. The definitions are taken from \cite{facchinei2007}. First, we recall that $F$ is $\ell$-Lipschitz continuous if, for $\ell>0$,
$\norm{F(x)-F(y)} \leq \ell\norm{x-y} \text { for all } x, y \in \op{dom}(F).$
\begin{definition}[Monotone operators]
Given a mapping $F:\op{dom}(F)\subseteq\RR^n\to\RR^n$, we say that: $F$ is (strictly) monotone if for all $x, y \in \op{dom}(F)$ $(x\neq y)$
$\langle F(x)-F(y),x-y\rangle (>) \geq0;$ $F$ is (strictly) pseudomonotone if for all $x, y \in \op{dom}(F)$ $(x\neq y)$
$\langle F(y),x-y\rangle\geq 0 \Rightarrow \langle F(x),x-y\rangle(>)\geq 0;$ $\beta$-cocoercive with $\beta>0$, if for all $x, y \in \op{dom}(F)$
$\langle F(x)-F(y),x-y\rangle \geq \beta\|F(x)-F(y)\|^{2};$ $F$ is firmly nonexpansive if for all $x, y \in \op{dom}(F)$
$\|F(x)-F(y)\|^{2} \leq\|x-y\|^{2}-\|(\mathrm{Id}-F) (x)-(\mathrm{Id}-F) (y)\|^{2}.$
\end{definition}
An example of firmly nonexpansive operator is the projection operator over a nonempty, compact and convex set \cite[Proposition 4.16]{bau2011}. We note that a firmly nonexpansive operator is also nonexpansive and firmly quasinonexpansive \cite[Definition 4.1]{bau2011}.
We note that if a mapping is $\beta$-cocoercive it is also $1/\beta$-Lipschitz continuous \cite[Remark 4.15]{bau2011}.

\section{Stochastic Generalized Nash equilibrium problems}\label{sec_GNEPs}
We consider a set of noncooperative agents $\mc I=\{1,\dots,N\}$, each of them choosing its strategy $x_i\in\RR^{n_i}$ with the aim of minimizing its local cost function within its feasible strategy set. 
The local decision set of each agent is indicated with $\Omega_i$, i.e., for all $i\in\mc I$, $x_i\in\Omega_i\subseteq\RR^{n_i}$.
Besides the local set, each agent is subject to some joint feasibility constraints, $g(x)\leq 0$. Let us set $n=\sum_{i=1}^Nn_i$ and $\bs\Omega=\prod_{i=1}^N\Omega_i$, then, the collective feasible set can be written as
\begin{equation}\label{collective_set}
\bs{\mc{X}}=\bs\Omega\cap\left\{\bs y \in\bs\RR^n\; | \;g(\bs y) \leq {\bf{0}}_{m}\right\},
\end{equation}
where $g:\RR^{nN}\to\RR^m$ \cite{belgioioso2017}. Let us also indicate with $\mc X_i(\bs x_{-i})$ the piece of coupling constraints corresponding to agent $i$, which is affected by the decision variables of the other agents $\bs x_{-i}=\op{col}((x_j)_{j\neq i})$.

\begin{assumption}[Constraint qualification]\label{ass_constr}
For each $i \in \mc I,$ the set $\Omega_{i}$ is nonempty, closed and convex.
The set $\bs{\mc{X}}$ satisfies Slater's constraint qualification. 
\fineass\end{assumption}
\begin{assumption}[Separable convex coupling constraints] 
The mapping $g$ in \eqref{collective_set} has a separable form, $g(\boldsymbol{x}):=\sum_{i=1}^{N} g_{i}\left(x^{i}\right)$, for some convex differentiable functions $g_{i}:\mathbb{R}^{n} \rightarrow \mathbb{R}^{m}, i\in\mc I$ and it is $\ell_{\mathrm{g}}$-Lipschitz continuous. Its gradient $\nabla g$ is bounded, i.e., $\op{sup}_{\bs x\in\bs{\mc X}}\|\nabla g(\boldsymbol{x})\| \leq B_{\nabla \mathrm{g}}$.
\end{assumption}

The local cost function of agent $i$ is defined as 
\begin{equation}\label{eq_cost_stoc}
\JJ_i(x_i,\bs{x}_{-i})=\EE_\xi[J_i(x_i,\bs{x}_{-i},\xi(\varpi))]+f_i(x_i),
\end{equation}
for some measurable function $J_i:\mc \RR^{n}\times \RR^d\to \RR$. The cost function $\JJ_i$ of agent $i\in\mc I$ depends on the local variable $x_i$, the decisions of the other players $\bs x_{-i}$ and the random variable $\xi:\Xi\to\RR^d$ that express the uncertainty.
$\EE_\xi$ represent the mathematical expectation with respect to the distribution of the random variable $\xi(\varpi)$\footnote{From now on, we use $\xi$ instead of $\xi(\varpi)$ and $\EE$ instead of $\EE_\xi$.} in the probability space $(\Xi, \mc F, \PP)$. We assume that $\EE[J_i(\bs{x},\xi)]$ is well defined for all feasible $\bs{x}\in\bs{\mc X}$ \cite{ravat2011}. Moreover, the cost function presents the typical splitting in a smooth part and a nonsmooth part. The latter is indicated with $f_i:\RR^{n_i}\to\bar{\RR}$ and it can represent a local cost or local constraints via the indicator function, i.e. $f_i(x_i)=\iota_{\Omega_i}(x_i)$.

\begin{assumption}[Cost function convexity]\label{ass_J}
For each $i\in\mc I$, the function $f_i$ in \eqref{eq_cost_stoc} is lower semicontinuous and convex and $\op{dom}(f_i)=\Omega_i$. 
For each $i \in \mc I$ and $\boldsymbol{x}_{-i} \in \bs{\mc{X}}_{-i}$ the function $\JJ_{i}(\cdot, \boldsymbol{x}_{-i})$ is convex and continuously differentiable.
\fineass\end{assumption}

Given the decision variables of the other agents $\bs{x}_{-i}$, each agent $i$ aims at choosing a strategy $x_i$, that solves its local optimization problem, i.e.,
\begin{equation}\label{eq_game}
\forall i \in \mc I: \quad\left\{\begin{array}{cl}
\min_{x_i \in \Omega_i} & \JJ_i\left(x_i, \bs{x}_{-i}\right)\\ 
\text { s.t. } & g(x_i,\bs x_{-i})\leq0.
\end{array} \right.
\end{equation}
From a game-theoretic perspective, the solution concept that we are seeking is that of stochastic generalized Nash equilibrium (SGNE).
\begin{definition}\label{def_GNE}
A Stochastic Generalized Nash equilibrium is a collective strategy $\bs x^*\in\bs{\mc X}$ such that for all $i \in \mc I$
$$\JJ_i(x_i^{*}, \boldsymbol x_{-i}^{*}) \leq \inf \{\JJ_i(y, \boldsymbol x_{-i}^{*})\; | \; y \in \mc{X}_i(\bs x_{-i})\}.$$
\end{definition}

In other words, a SGNE is a set of strategies where no agent can decrease its objective function by unilaterally deviating from its decision.
To guarantee that a SGNE exists, we make further assumptions on the cost functions \cite{ravat2011}.
\begin{assumption}[Convexity and measurability]\label{ass_J_exp}
For each $i\in\mc I$ and for each $\xi \in \Xi$, the function $J_{i}(\cdot,\boldsymbol x_{-i},\xi)$ is convex, Lipschitz continuous, and continuously differentiable. The function $J_{i}(x_i,\bs x_{-i},\cdot)$ is measurable and for each $\boldsymbol x_{-i}$, the Lipschitz constant $\ell_i(\boldsymbol x_{-i},\xi)$ is integrable in $\xi$.
\fineass\end{assumption}

Existence of a SGNE of the game in \eqref{eq_game} is guaranteed, under Assumptions \ref{ass_constr}--\ref{ass_J_exp}, by \cite[Section 3.1]{ravat2011} while uniqueness does not hold in general \cite[Section 3.2]{ravat2011}.
Within all the possible Nash equilibria, we focus on those that corresponds to the solution set of an appropriate stochastic variational inequality. 
To this aim, let us denote the pseudogradient mapping as
\begin{equation}\label{eq_grad}
\FF(\bs{x})=\op{col}\left(\EE[\nabla_{x_{1}} J_{1}(x_{1}, \bs{x}_{-1})], \dots, \EE[\nabla_{x_{N}} J_{N}(x_{N}, \bs{x}_{-N})]\right)
\end{equation} 
and let
$\partial f(\bs x)=\op{col}(\partial f_1(x_1),\dots,\partial f_N(x_N)).$
The possibility to exchange the expected value and the pseudogradient $\FF$ in \eqref{eq_grad} is guaranteed by Assumption \ref{ass_J_exp}. Then, the associated stochastic variational inequality (SVI) reads as
\begin{equation}\label{eq_SVI}
\langle \FF(\bs x^*),\bs x-\bs x^*\rangle+\sum_{i\in\mc I}\left\{f_i(x_i)-f_i(x^*_i)\right\}
\geq 0,\text { for all } \bs x \in \bs{\mc X},
\end{equation}
where $\bs{\mc X}$ is the intersection of the local and coupling constraints as in \eqref{collective_set}.
When Assumptions \ref{ass_constr}--\ref{ass_J_exp} hold, any solution of $\op{SVI}(\bs{\mc X} , \FF)$ in \eqref{eq_SVI} is a SGNE of the game in (\ref{eq_game}) while vice versa does not hold in general. This is because a game may have a Nash equilibrium while the corresponding VI may have no solution \cite[Proposition 12.7]{palomar2010}. 
\begin{assumption}[Existence of a variational equilibrium]\label{ass_sol}
The SVI in \eqref{eq_SVI} has at least one solution, i.e., $\op{SOL}(\bs{\mc X},\FF)\neq\varnothing$.\fineass
\end{assumption}
\begin{remark}\label{remark_set}
Assumption \ref{ass_sol} is satisfied if the sets $\Omega_i$, $i\in\mc I$, are compact \cite[Corollary 2.2.5]{facchinei2007}.\fineass
\end{remark}
We call variational equilibria (v-SGNE) the SGNE that are also solution of the associated SVI, namely, the solution of the $\op{SVI}(\bs{\mc X} , \FF)$ in (\ref{eq_SVI}) with $\FF$ in (\ref{eq_grad}) and $\bs{\mc X}$ in (\ref{collective_set}). 


In the remaining part of this section, we recast the SGNEP as a monotone inclusion, i.e., the problem of finding a zero of a set-valued monotone operator. To this aim, we characterize the SGNE of the game in terms of the Karush--Kuhn--Tucker (KKT) conditions of the coupled optimization problems in (\ref{eq_game}). 
Let us define the Lagrangian function, for each $i\in\mc I$, with
$$\mc L_i\left(\bs{x}, \lambda_i\right) :=\JJ_i\left(x_i, \bs{x}_{-i}\right)+f_i\left(x_i\right)+\lambda_i^{\top}g(x_i^*,\bs x_{-i}^*),$$
where $\lambda_i \in \RR_{ \geq 0}^{m}$ is the Lagrangian dual variable associated with the coupling constraints. 
Then, a set of strategies $\bs x^{*}$ is a SGNE if and only if the KKT conditions are satisfied \cite[Theorem 4.6]{facchinei2010}.
Moreover, according to \cite[Theorem 3.1]{facchinei2007vi}, \cite[Theorem 3.1]{auslender2000}, the v-SGNE are those equilibria such that the shared constraints have the same dual variable for all the agents, i.e. $\lambda_i=\lambda$ for all $i\in\mc I$, and solve the $\op{SVI}(\bs{\mc X},\FF)$ in \eqref{eq_SVI}. Thus, $\bs x^*$ is a v-SGNE if the following KKT inclusions, for all $i\in\mc I$, are satisfied for some $\lambda\in\RR^m_{\geq0}$:
\begin{equation}\label{eq_KKT_VI}
\begin{cases}
0 \in \EE[\nabla_{x_i} J_i(x_i^{*}, \bs{x}_{-i}^{*},\xi)]+\partial f_i\left(x_i^{*}\right)+\nabla g(x_i^*,\bs x_{-i}^*)^\top \lambda\\ 
0\in -g(\bs x^*)+N_{\RR^m_{\geq 0}}(\lambda).
\end{cases}
\end{equation}

\section{Distributed stochastic relaxed forward--backward algorithm}\label{sec_algo_sgnep}
In this section we describe the details that lead to the distributed iterations in Algorithm \ref{algo_i} which include an averaging step \eqref{eq_ave} and a proximal step \eqref{eq_prox}. The averaging step induces some inertia but it allows us to prove convergence under mild monotonicity assumptions. Moreover, for the decision variable $x_i$, the proximal update guarantees that the local constraints are always satisfied while the coupling constraints are enforced asymptotically through the dual variable $\lambda_i$, which should be nonnegative. The variable $z_i$ is an auxiliary variable to force consensus on the dual variables. We note that to update the primal variable we use an approximation $\hat F$ of the pseudogradient mapping $\FF$, characterized in Section \ref{sec_approx}.
\begin{algorithm}[t]
\caption{Stochastic Relaxed Forward Backward (SRFB)}\label{algo_i}
Initialization: $x_i^0 \in \Omega_i, \lambda_i^0 \in \RR_{\geq0}^{m},$ and $z_i^0 \in \RR^{m} .$\\
Iteration $k$: Agent $i$\\
(1) Updates the variables
\begin{equation}\label{eq_ave}
\begin{aligned}
&\bar x_{i}^{k}=(1-\delta)x_i^k+\delta\bar x_i^{k-1}\\
&\bar z_{i}^{k}=(1-\delta)z_i^k+\delta\bar z_i^{k-1}\\
&\bar \lambda_{i}^{k}=(1-\delta)\lambda_i^k+\delta\bar \lambda_i^{k-1}\\
\end{aligned}
\end{equation}
(2) Receives $x_{j}^k$ for all $j \in \mathcal{N}_{i}^{J}$ and $z_j^k, \lambda_{j}^k$ for $j \in \mathcal{N}_{i}^{\lambda}$, then updates:
\begin{equation}\label{eq_prox}
\begin{aligned}
x_i^{k+1}&=\op{prox}_{f_i}[\bar x_i^k-\alpha_{i}(\hat F_{i}(x_i^k, \boldsymbol{x}_{-i}^k,\xi_i^k)+\nabla g_i(x_i)^\top \lambda_i^k)]\\
z_i^{k+1}&=\bar z_i^k-\nu_{i}\textstyle{ \sum_{j \in \mathcal{N}_{i}^{\lambda}}} w_{i,j}(\lambda_i^k-\lambda_{j}^k)\\
\lambda_i^{k+1}&=\op{proj}_{\RR^m_{\geq 0}}\{\bar \lambda_i^k+\tau_{i}g_i(x_i^k)\\
&-\tau\textstyle{\sum_{j \in \mathcal{N}_{i}^{\lambda}}} w_{i,j}[(z_{i}^{k}-z_j^k)-(\lambda_i^k-\lambda_j^k)]\}
\end{aligned}
\end{equation}
\end{algorithm}
We suppose that each player $i$ knows its local data $\Omega_i$, and their part $\mc X_i(\bs x_{-i})$.
We also suppose that the agents have access to a pool of samples of the random variable and are able to compute, given the actions of the other players $\bs x_{-i}$, the pseudogradient $\FF$ of their own cost functions (or an approximation $\hat F$). 
The set of agents $j$ whose decisions affect the cost function of agent $i$, are denoted by $\mc N_i^J$. Specifically, $j\in\mc N_i^J$ if the function $J_i(x_i,\bs x_{-i})$ explicitly depends on $x_j$.
Under these premises, Algorithm \ref{algo_i} is distributed in the sense that each agent knows its own problem data and variables and communicates with the other agents only to access the information to compute $\FF$ (full-decision information setup \cite{yi2019}).

Let us also introduce the graph $\mc G^\lambda=(\mc I,\mc E^\lambda)$ through which a local copy of the dual variable is shared. According to \cite[Theorem 3.1]{facchinei2007vi}, \cite[Theorem 3.1]{auslender2000}, we seek for a v-SGNE with consensus of the dual variables. Therefore, along with the dual variable, agents share through $\mc G^\lambda$ a copy of an auxiliary variable $z_i\in\RR^m$ whose role is to force consensus. A deeper insight on this variable is given later in this section.
The set of edges $\mc E^\lambda$ of the multiplier graph $\mc G^\lambda$, is given by: $(i,j)\in\mc E^\lambda$ if player $j$ share its $\{\lambda_j,z_j\}$ with player $i$. For all $i\in\mc I$, the neighboring agents in $\mc G^\lambda$ form the set $\mc N^\lambda_i=\{j\in\mc I:(i,j)\in\mc E^\lambda\}$. In this way, each agent controls his own decision variable and a local copy of the dual variable $\lambda_i$ and of the auxiliary variable $z_i$ and, through the graphs, it obtains the other agents variables. 
\begin{assumption}[Graph connectivity]\label{ass_Jraph}
The multiplier graph $\mc G^\lambda$ is undirected and connected.
\fineass\end{assumption}
The weighted adjacency matrix associated to $\mc G^\lambda$ is denoted with $W\in\RR^{N\times N}$. Then, letting $D=\op{diag}\{d_1,\dots,d_N\}$ where $d_i=\sum_{j=1}^Nw_{i,j}$ is the degree of agent $i$, the associated Laplacian is given by $L=D-W\in\RR^{N\times N}$. It follows from Assumption \ref{ass_Jraph} that $L=L^\top$.

Let us now rewrite the KKT conditions in \eqref{eq_KKT_VI} in compact form as
\begin{equation}\label{eq_T}
0\in\mc T(\bs{x},\bs\lambda)=\left[\begin{array}{c}
\FF(\bs{x})+\partial f(\bs x)+\nabla g(\bs x)^{\top}\bs \lambda \\ 
\mathrm{N}_{\RR_{ \geq 0}^{m}}(\bs\lambda)-g(\bs x)
\end{array}\right].
\end{equation}
$\mc T:\mc X\times \RR^m_{\geq 0}\rightrightarrows \RR^{n}\times\RR^m$ is a set-valued mapping and it follows that the v-SGNE of the game in \eqref{eq_game} correspond to the zeros of the mapping $\mc T$ which can be split as a summation of two operators, $\mc T=\mc A+\mc B$, where
\begin{equation}\label{eq_splitting}
\begin{aligned}
\mc{A} &:\left[\begin{array}{l}
\bs{x} \\
\lambda
\end{array}\right] \mapsto\left[\begin{array}{c}
\FF(\bs{x}) \\ 
0
\end{array}\right]+\left[\begin{array}{c}
\nabla g(\bs x)^\top\bs \lambda\\
-g(\bs x)
\end{array}\right]\\
\mc{B} &:\left[\begin{array}{l}
\bs{x} \\ 
\lambda
\end{array}\right] \mapsto\left[\begin{array}{c}
\partial f(\bs x) \\ 
\mathrm{N}_{\RR_{ \geq 0}^{m}}(\lambda)
\end{array}\right].
\end{aligned}
\end{equation}

To force consensus on the dual variables, the authors in \cite{yi2019} proposed the Laplacian constraint ${\bf{L}}\bs\lambda=0$. This is why, to preserve monotonicity, we expand the two operators $\mc A$ and $\mc B$ in \eqref{eq_splitting} and introduce the auxiliary variable $\bs z=\op{col}(z_1,\dots,z_N)\in\RR^{Nm}$.
Let us first define ${\bf{L}}=L\otimes \op{Id}_m\in\RR^{Nm\times Nm}$ where $L$ is the laplacian of $\mc G^\lambda$ and $\bs \lambda=\op{col}(\lambda_1,\dots,\lambda_N)\in\RR^{Nm}$. Then, the two operators $\mc A$ and $\mc B$ in \eqref{eq_splitting} can be rewritten as
\begin{equation}\label{eq_expanded}
\begin{aligned}
\bar{\mc{A}} &:\left[\begin{array}{l}
\bs{x} \\
\bs z\\
\bs \lambda
\end{array}\right] \mapsto\left[\begin{array}{c}
\FF(\bs{x}) \\ 
0\\
{\bf{L}}\bs\lambda
\end{array}\right]+
\left[\begin{array}{ccc}
\nabla g(\bs x)^\top\bs\lambda\\
{\bf{L}} \bs z\\
-g(\bs x)-{\bf{L}} \bs z
\end{array}\right]\\
\bar{\mc{B}} &:\left[\begin{array}{l}
\bs{x} \\ 
\bs z\\
\bs \lambda
\end{array}\right] \mapsto\left[\begin{array}{c}
\partial f(\bs x) \\ 
{\bf{0}}\\
\mathrm{N}_{\RR_{ \geq 0}^{m}}(\lambda)
\end{array}\right].
\end{aligned}
\end{equation}

From now on, we indicate the state variable as $\bs{\omega}=\op{col}(\bs x,\bs z,\bs \lambda)$. The properties of the operators in \eqref{eq_expanded} depends on the properties of $\FF$ and are described in the next section. We here show that the zeros of $\bar{\mc A}+\bar{\mc B}$ are the same as the zeros of $\mc T$ in \eqref{eq_T}.

\begin{lemma}\label{lemma_zero}
Let Assumptions \ref{ass_constr}--\ref{ass_Jraph} hold and consider the operators $\mc A$ and $\mc B$ in (\ref{eq_splitting}), and the operators $\bar{\mc A}$ and $\bar{\mc B}$ in (\ref{eq_expanded}). Then the following statements hold.
\begin{enumerate}
\item[(i)] Given any $\bs\omega^* \in \op{zer}(\bar{\mc A}+\bar{\mc B})$, $\bs{x}^{*}$ is a v-SGNE of game in (\ref{eq_game}), i.e., $\bs{x}^*$ solves the $\op{SVI}(\bs{\mc X} , \FF)$ in (\ref{eq_SVI}). Moreover $\bs{\lambda}^{*}=\mathbf{1}_{N} \otimes \lambda^{*},$ and $(\bs{x}^*,\lambda^{*})$ satisfy the KKT condition in (\ref{eq_KKT_VI}) i.e., $\op{col}(\bs{x}^*, \lambda^{*}) \in \op{zer}(\mc A+\mc B)$.
\item[(ii)] $\op{zer}(\mc A+\mc B) \neq \emptyset$ and $\op{zer}(\bar{\mc A}+\bar{\mc B}) \neq \emptyset$.
\end{enumerate}
\end{lemma}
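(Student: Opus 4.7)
The plan is to verify the equivalence of the two monotone inclusions by unpacking the componentwise zero conditions of $\bar{\mc A}+\bar{\mc B}$ and matching them with the KKT system in (\ref{eq_KKT_VI}). For (i), I fix $\bs\omega^* = \op{col}(\bs x^*, \bs z^*, \bs\lambda^*) \in \op{zer}(\bar{\mc A}+\bar{\mc B})$ and expand the inclusion blockwise using (\ref{eq_expanded}). This delivers three conditions: a primal inclusion $0 \in \FF(\bs x^*) + \nabla g(\bs x^*)^\top \bs\lambda^* + \partial f(\bs x^*)$; a pure Laplacian equation on the middle block (since the $\bar{\mc B}$ contribution there is ${\bf 0}$); and a dual inclusion of the form $0 \in {\bf L}\bs\lambda^* - g(\bs x^*) - {\bf L}\bs z^* + \mathrm{N}_{\RR_{\geq 0}^{Nm}}(\bs\lambda^*)$.

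The middle block, combined with the connectedness of $\mc G^\lambda$ from Assumption \ref{ass_Jraph}, forces the dual vector into the consensus subspace $\op{ker}({\bf L}) = \op{range}(\mathbf{1}_N \otimes \op{Id}_m)$, so that $\bs\lambda^* = \mathbf{1}_N \otimes \lambda^*$ for a common $\lambda^* \in \RR^m_{\geq 0}$. With this consensus identified, I would left-multiply the third block by $(\mathbf{1}_N^\top \otimes \op{Id}_m)$: the two Laplacian terms vanish since $(\mathbf{1}_N^\top \otimes \op{Id}_m){\bf L} = 0$, the separable structure of $g$ collapses the residual to $\sum_i g_i(x_i^*) = g(\bs x^*)$, and the normal-cone contribution decomposes via $\mathrm{N}_{\RR_{\geq 0}^{Nm}}(\mathbf{1}_N \otimes \lambda^*) = \prod_i \mathrm{N}_{\RR_{\geq 0}^m}(\lambda^*)$ and sums to $\mathrm{N}_{\RR_{\geq 0}^m}(\lambda^*)$. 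This recovers the dual KKT condition $0 \in -g(\bs x^*) + \mathrm{N}_{\RR_{\geq 0}^m}(\lambda^*)$, while substituting $\bs\lambda^* = \mathbf{1}_N \otimes \lambda^*$ into the primal block reproduces the first line of (\ref{eq_KKT_VI}). Thus $\op{col}(\bs x^*, \lambda^*) \in \op{zer}(\mc A + \mc B)$, and by \cite[Theorem 3.1]{facchinei2007vi} (already invoked in the text) $\bs x^*$ is a v-SGNE, i.e., a solution of $\op{SVI}(\bs{\mc X}, \FF)$ in (\ref{eq_SVI}).

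For (ii), nonemptyness of $\op{zer}(\mc A + \mc B)$ is immediate: Assumption \ref{ass_sol} yields a v-SGNE $\bs x^*$, and Slater's CQ from Assumption \ref{ass_constr} provides a multiplier $\lambda^* \in \RR^m_{\geq 0}$ fulfilling (\ref{eq_KKT_VI}). To lift $(\bs x^*, \lambda^*)$ to an element of $\op{zer}(\bar{\mc A}+\bar{\mc B})$, I would set $\bs\lambda^* := \mathbf{1}_N \otimes \lambda^*$ (which annihilates ${\bf L}\bs\lambda^*$) and then solve the residual Laplacian equation arising from the third block for the auxiliary variable $\bs z^*$. Under Assumption \ref{ass_Jraph}, ${\bf L}$ is invertible on the orthogonal complement of its kernel, so solvability reduces to checking that the forcing term lies in $\op{range}({\bf L})$, which in turn follows by premultiplying by $(\mathbf{1}_N^\top \otimes \op{Id}_m)$ and matching against the scalar KKT condition already satisfied by $(\bs x^*, \lambda^*)$.

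The step I expect to be the main obstacle is precisely this lifting in (ii): one must exhibit a selection $\bs\mu^* \in \prod_i \mathrm{N}_{\RR_{\geq 0}^m}(\lambda^*)$ whose componentwise aggregate equals $g(\bs x^*)$, so that the Laplacian system for $\bs z^*$ is consistent. This is a delicate but standard application of the product decomposition of the normal cone at a consensus point together with the scalar dual-feasibility of $\lambda^*$; all remaining manipulations are routine linear algebra exploiting the separability of $g$ and the kernel/range structure of ${\bf L}$.
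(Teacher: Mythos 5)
Your argument is correct and is essentially the same as the paper's: the paper proves (i) by citing \cite[Theorem 2]{yi2019} and (ii) via Assumption \ref{ass_sol} plus \cite[Theorem 3.1]{auslender2000} followed by the same Laplacian/normal-cone lifting of $(\bs x^*,\lambda^*)$ to a zero of $\bar{\mc A}+\bar{\mc B}$, and your blockwise expansion, consensus via connectivity of $\mc G^\lambda$, premultiplication by $\mathbf{1}_N^\top\otimes\op{Id}_m$, and range-of-${\bf L}$ consistency check are precisely the details hidden in those citations. Two small remarks: your reading of the middle block as ${\bf L}\bs\lambda^*=0$ (which is what yields dual consensus) corresponds to the intended operator—consistent with the $z$-update in Algorithm \ref{algo_i} and the skew structure in \eqref{eq_expanded2}—rather than the literal ``${\bf L}\bs z$'' appearing in \eqref{eq_expanded}; and the selection you flag as the main obstacle in (ii) is immediate, since the scalar KKT condition gives $g(\bs x^*)\in \mathrm{N}_{\RR^m_{\geq 0}}(\lambda^*)$ and this set is a convex cone, so one may take each agent's normal-cone element equal to $g(\bs x^*)/N$, making the forcing term sum to zero and hence lie in $\op{range}({\bf L})$.
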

\begin{proof}
See Appendix \ref{app_op}.
\end{proof}

Since the distribution of the random variable is unknown, the expected value mapping can be hard to compute. Therefore, we take an approximation of the pseudogradient mapping, properly defined in Section \ref{sec_approx}. Therefore, in what follows, we replace $\bar{\mc A}$ with 
\begin{equation}\label{eq_A_hat}
\hat{\mc A}:\left[\begin{array}{c}\hspace{-.1cm}
(\bs{x},\xi)\hspace{-.1cm} \\ 
\bs z\\
\bs \lambda
\end{array}\right] \hspace{-.1cm}\mapsto\hspace{-.1cm}\left[\begin{array}{c}
\hat F(\bs{x},\xi) \\ 
0\\
\hspace{-.1cm}\bf L\bs\lambda\hspace{-.1cm}
\end{array}\right]+\left[\begin{array}{ccc}
\nabla g(\bs x)^\top\bs\lambda\\
{\bf{L}} \bs z\\
-g(\bs x)-{\bf{L}} \bs z
\end{array}\right].
\end{equation}
where $\hat F$ is an approximation of the expected value mapping $\FF$ in (\ref{eq_grad}) given some realizations of the random vector $\xi$.

Then, Algorithm \ref{algo_i} can be written in compact form as \cite{malitsky2019}
\begin{equation}\label{algo}
\begin{aligned} 
\bar{\bs \omega}^{k} &=(1-\delta)\bs \omega^k+\delta\bar{\bs \omega}^{k-1} \\ 
\bs \omega^{k+1} &=(\op{Id}+\Phi^{-1}\bar{\mc B})^{-1}(\bar{\bs \omega}^{k}-\Phi^{-1}\hat{\mc A}(\bs \omega^k)).
\end{aligned}
\end{equation}
where $\Phi\succ0$ contains the inverse of step size sequences 
\begin{equation}\label{eq_phi}
\Phi=\op{diag}(\alpha^{-1},\nu^{-1},\sigma^{-1}),
\end{equation}
and $\alpha^{-1}$, $\nu^{-1}$, $\sigma^{-1}$ are diagonal matrices.

\subsection{Approximation Scheme}\label{sec_approx}

We now enter the details of the approximation introduced in Algorithm \ref{algo_i}. We use a stochastic approximation (SA) scheme with variance reduction, hence, we suppose that, at each iteration $k$, the agents have access to a pool of samples of the random variable and are able to compute an approximation of $\FF(\bs x)$ of the form
\begin{equation}\label{eq_F_SAA}
\begin{aligned}
&\hat F(\bs x,\bs\xi)=\op{col}(\hat F_i(\bs x,\bs\xi))\\
&=\op{col}\left(\frac{1}{S_k} \sum_{t=1}^{S_k} \nabla_{x_1}J_1(\bs x,\xi_1^{(t)}),\dots,\frac{1}{S_k} \sum_{t=1}^{S_k} \nabla_{x_N}J_N(\bs x, \xi_N^{(t)})\right).
\end{aligned}
\end{equation}
where $\bs \xi=\op{col}(\bar \xi_1,\dots,\bar\xi_N)$, for all $i\in\mc I$, $\bar\xi_i=\op{col}(\xi_i^{(1)},\dots,\xi_i^{(S_k)})$ and $\bs \xi$ is an i.i.d. sequence of random variables drawn from $\PP$. Approximations of the form (\ref{eq_F_SAA}) are very common in Monte-Carlo simulation approaches, machine learning \cite{staudigl2019}; they are easy to obtain in case we are able to sample from the measure $\PP$.
Typical assumptions when using an approximation as in \eqref{eq_F_SAA} are related to the choice of a proper batch size sequence $S_k$ \cite{staudigl2019,iusem2017}.

\begin{assumption}[Increasing batch size]\label{ass_batch}
The batch size sequence $(S_k)_{k\geq 1}$ is such that, for some $c,k_0,a>0$,
$$S_k\geq c(k+k_0)^{a+1}.\vspace{-.65cm}$$

\fineass\end{assumption}
Form Assumption \ref{ass_batch}, it follows that $1/S_k$ is summable, which is a standard assumption when used in combination with the forthcoming variance reduction assumption to control the stochastic error \cite{staudigl2019,iusem2017}. 
For $k \geq 0,$ the approximation error is defined as
$$\epsilon_k= \hat F(\bs x^k,\xi^k)-\FF(\bs x^k).$$

\begin{remark}\label{remark_error}
Since there is no uncertainty in the constraints, we indicate with $\varepsilon_k=\op{col}(\epsilon_k,0,0)$ the error on the extended operator, i.e.,
$\hat{\mc A}(\bs\omega^k,\xi^k)-\mc A(\bs\omega^k)=\varepsilon_k.$
$\hat{\mc A}$ is the operator in \eqref{eq_A_hat} with approximation $\hat F$ in \eqref{eq_F_SAA}.
\fineass\end{remark}
In the stochastic framework, there are usually assumptions on the expected value and variance of the stochastic error $\epsilon_k$ \cite{iusem2017,koshal2013,kannan2019}.
Let us define the filtration $\mc F=\{\mc F_k\}$, that is, a family of $\sigma$-algebras such that $\mathcal{F}_{0} = \sigma\left(X_{0}\right)$ and $\mathcal{F}_{k} = \sigma\left(X_{0}, \xi_{1}, \xi_{2}, \ldots, \xi_{k}\right) \quad \forall k \geq 1,$
such that $\mc F_k\subseteq\mc F_{k+1}$ for all $k\in\NN$. In words, $\mc F_k$ contains the information up to time $k$.

\begin{assumption}[Zero mean error]\label{ass_error}
The stochastic error is such that, for all $k\in\NN$, a.s.,
$\EEk{\epsilon^k}=0.$
\fineass\end{assumption}

\begin{assumption}[Variance control]\label{ass_variance}
There exist $p \geq 2$, $\sigma_{0} \geq 0$ and a measurable locally bounded function $\sigma : \op{SOL}(\bs{\mc X},\FF) \rightarrow \mathbb{R} $ such that for all $(\bs x,\bs x^* \in \bs{\mc X} \times \op{SOL}(\bs{\mc X},\FF)$ 
\begin{equation*}\label{eq_variance}
\mathbb{E}[\|\hat F(\bs x, \xi)-\FF(\bs x)\|^{p}]^{1/p} \leq \sigma\left(\bs x^{*}\right)+\sigma_{0}\left\|\bs x-\bs x^{*}\right\|.\vspace{-.5cm}
\end{equation*}
\fineass\end{assumption}
\begin{remark}
When the feasible set is compact (as in Remark \ref{remark_set}) an uniform bounded variance, i.e., for some $\sigma>0$
\begin{equation*}
\op{\sup}_{\bs x\in\bs{\mc X}}\EE[\|\hat F(\bs x, \xi)-\FF(\bs x)\|^{2}]\leq \sigma^2,
\end{equation*}
can be considered instead of Assumption \ref{ass_variance}.
\fineass\end{remark}

\subsection{Convergence analysis}\label{sec_conv_vr}

Now, we study the convergence of the algorithm.
First, to ensure that $\bar{\mc A}$ and $\bar{\mc B}$ have the properties that we use for the analysis, we make the following assumption.
\begin{assumption}[Monotonicity]\label{ass_mono}
$\FF$ in \eqref{eq_grad} is monotone and $\ell_\FF$-Lipschitz continuous for some $\ell_\FF>0$.
\fineass\end{assumption}

\begin{lemma}\label{lemma_op}
Let Assumptions \ref{ass_Jraph} and \ref{ass_mono} hold and let $\Phi\succ 0$. Then, the operators $\bar{\mc A}$ and $\bar{\mc B}$ in \eqref{eq_expanded} have the following properties.
\begin{enumerate}
\item $\bar{\mc A}$ is monotone and $\ell_{\bar{\mc A}}$-Lipschitz continuous.
\item The operator $\bar{\mc B}$ is maximally monotone.
\item $\Phi^{-1}\bar{\mc A}$ is monotone and $\ell_{\Phi}$-Lipschitz continuous.
\item $\Phi^{-1}\bar{\mc B}$ is maximally monotone.
\end{enumerate}
\end{lemma}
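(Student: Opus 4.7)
The plan is to verify each of the four properties in turn, exploiting the block-triangular structure of $\bar{\mc A}$ and $\bar{\mc B}$ and using the $\Phi$-induced geometry for the preconditioned versions.

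For claim (1), I would split $\bar{\mc A}=\bar{\mc A}_1+\bar{\mc A}_2+\bar{\mc A}_3$, where $\bar{\mc A}_1(\bs\omega)=\op{col}(\FF(\bs x),0,0)$ is monotone and $\ell_\FF$-Lipschitz by Assumption \ref{ass_mono}; $\bar{\mc A}_2(\bs\omega)=\op{col}(0,{\bf L}\bs z, {\bf L}\bs\lambda-{\bf L}\bs z)$ is a linear map on $(\bs z,\bs\lambda)$ whose symmetric part is $\begin{pmatrix}{\bf L}&-\tfrac12{\bf L}\\-\tfrac12{\bf L}&{\bf L}\end{pmatrix}$, which is positive semidefinite because a short Cauchy--Schwarz estimate gives $\bs u^\top{\bf L}\bs u-\bs u^\top{\bf L}\bs v+\bs v^\top{\bf L}\bs v\geq\tfrac12(\bs u^\top{\bf L}\bs u+\bs v^\top{\bf L}\bs v)\geq 0$, and Lipschitz with constant $\|{\bf L}\|$; and $\bar{\mc A}_3(\bs\omega)=\op{col}(\nabla g(\bs x)^\top\bs\lambda,0,-g(\bs x))$ is the primal--dual coupling associated to the Lagrangian $\bs\lambda^\top g(\bs x)$. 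Expanding the monotonicity inner product and invoking the gradient inequality for each convex component $g_j$ gives $\langle\bs\lambda,g(\bs x')-g(\bs x)-\nabla g(\bs x)(\bs x'-\bs x)\rangle+\langle\bs\lambda',g(\bs x)-g(\bs x')-\nabla g(\bs x')(\bs x-\bs x')\rangle\geq 0$ on the effective domain $\bs\lambda,\bs\lambda'\geq 0$ of $\bar{\mc B}$. Lipschitz continuity of $\bar{\mc A}_3$ follows from the $\ell_g$-Lipschitz continuity of $g$ and the bound $\|\nabla g\|\leq B_{\nabla g}$ in Assumption 2. Summing the three Lipschitz constants yields $\ell_{\bar{\mc A}}$.

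For claim (2), the operator $\bar{\mc B}$ is the Cartesian product $\partial f\times\{0\}\times N_{\RR^m_{\geq 0}}$. Each factor is maximally monotone: $\partial f=\partial f_1\times\cdots\times\partial f_N$ by Rockafellar's theorem since each $f_i$ is proper lsc convex (Assumption \ref{ass_J}); the zero map is trivially maximally monotone; and $N_{\RR^m_{\geq 0}}$ is the normal cone of a nonempty closed convex set \cite[Ex.~20.26]{bau2011}. Maximal monotonicity is preserved under Cartesian products \cite[Prop.~20.23]{bau2011}.

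For claims (3)--(4), I would use the $\Phi$-induced inner product. Since $\Phi\succ 0$, the identity
\[
\langle\Phi^{-1}T(\bs\omega)-\Phi^{-1}T(\bs\omega'),\bs\omega-\bs\omega'\rangle_{\Phi}=\langle T(\bs\omega)-T(\bs\omega'),\bs\omega-\bs\omega'\rangle
\]
implies that $\Phi^{-1}T$ is (maximally) monotone in the $\Phi$-norm iff $T$ is (maximally) monotone in the standard inner product; applying this with $T=\bar{\mc A}$ and $T=\bar{\mc B}$ yields the desired monotonicity statements from (1)--(2). Lipschitz continuity of $\Phi^{-1}\bar{\mc A}$ in the $\Phi$-norm then follows from $\|\Phi^{-1}T(\bs\omega)-\Phi^{-1}T(\bs\omega')\|_{\Phi}^2=\langle T(\bs\omega)-T(\bs\omega'),\Phi^{-1}(T(\bs\omega)-T(\bs\omega'))\rangle\leq\|\Phi^{-1}\|\,\ell_{\bar{\mc A}}^{2}\|\bs\omega-\bs\omega'\|^2$ together with $\|\bs\omega-\bs\omega'\|^2\leq\|\Phi^{-1}\|\,\|\bs\omega-\bs\omega'\|_\Phi^{2}$, giving $\ell_\Phi=\|\Phi^{-1}\|\,\ell_{\bar{\mc A}}$.

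The main obstacle is the monotonicity argument for the primal--dual block $\bar{\mc A}_3$: one must exploit componentwise convexity of $g$ carefully and rely on the fact that the dual variables lie in $\RR^m_{\geq 0}$ (the effective domain of $\bar{\mc B}$), which is the standard but delicate subtlety of Lagrangian saddle-point operators; the remaining pieces are routine manipulations involving the Laplacian's positive semidefiniteness and the preconditioner $\Phi$.
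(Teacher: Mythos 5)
Your proof is correct, and it reaches the same conclusions by a somewhat more self-contained route than the paper. The paper splits $\bar{\mc A}$ into only two pieces: the "monotone part" $\op{col}(\FF(\bs x),0,{\bf L}\bs\lambda)$, handled via Assumption \ref{ass_mono} and cocoercivity (hence monotonicity) of ${\bf L}$ by Baillon--Haddad, and the remaining coupling part, whose monotonicity is obtained by citing Rockafellar's theorem on monotone operators associated with convex--concave (saddle) functions; Lipschitz continuity is then assembled from $\ell_\FF$, $\ell_L$, $\ell_g$ and $B_{\nabla g}$. You instead use a three-way split and verify everything by hand: positive semidefiniteness of the symmetric part of the Laplacian block (your Cauchy--Schwarz estimate is fine, though its Lipschitz constant is a fixed multiple of $\norm{{\bf L}}$ rather than $\norm{{\bf L}}$ itself -- immaterial), and monotonicity of the Lagrangian block $\op{col}(\nabla g(\bs x)^\top\bs\lambda,0,-g(\bs x))$ directly from componentwise convexity of $g$ together with $\bs\lambda,\bs\lambda'\in\RR^{mN}_{\geq0}$. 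This last point is actually a merit of your write-up: monotonicity of that block genuinely requires restricting to nonnegative multipliers (it fails on all of $\RR^m$ for non-affine convex $g$), a restriction the paper leaves implicit in the domain of $\mc T$ and in the citation of the saddle-function theorem, whereas you state it explicitly. Items (2)--(4) follow the paper's argument essentially verbatim (Cartesian product of maximally monotone operators; the $\Phi$-induced inner product identity), with the small bonus that you exhibit the explicit constant $\ell_\Phi=\norm{\Phi^{-1}}\,\ell_{\bar{\mc A}}$, which the paper only asserts "similarly." In short: the paper's proof is shorter because it leans on two classical citations; yours trades those citations for elementary computations and is more precise about where nonnegativity of the dual variables enters.
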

\begin{proof}
See Appendix \ref{app_op}.
\end{proof}

Lastly, we indicate how to choose the parameters of the algorithm. This is fundamental for the convergence analysis and, in practice, for the convergence speed.

\begin{assumption}[Averaging parameter]\label{ass_delta}
The averaging parameter $\delta$ in \eqref{algo} is such that
$$\textstyle{\frac{1}{\varphi}\leq\delta\leq 1}$$
where $\varphi=\frac{1+\sqrt{5}}{2}$ is the golden ratio. 
\fineass\end{assumption}
\begin{assumption}[Step size bound]\label{ass_step1}
The steps size is such that
$$0<\norm{\Phi^{-1}}\leq \tfrac{1}{2\delta(2\ell_{\bar{\mc A}}+1)}$$
where $\ell_{\bar{\mc A}}$ is the Lipschitz constant of $\bar{\mc A}$ as in Lemma \ref{lemma_op}.
\fineass\end{assumption}

We are now ready to state our convergence result.

\begin{theorem}\label{theo_sgne}
Let Assumptions \ref{ass_constr}--\ref{ass_step1} hold. Then, the sequence $(\bs x^k)_{k\in\NN}$ generated by Algorithm \ref{algo_i} with $\hat F$ as in \eqref{eq_F_SAA} converges a.s. to a v-SGNE of the game in \eqref{eq_game}.  
\end{theorem}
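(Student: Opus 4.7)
The plan is to mimic the deterministic analysis of Malitsky's golden ratio scheme, adapted to the monotone inclusion $0\in\bar{\mc A}(\bs\omega)+\bar{\mc B}(\bs\omega)$ with a stochastic oracle for $\bar{\mc A}$, and then close the argument with a Robbins--Siegmund-type almost sure convergence lemma. Throughout, write $\bs\omega^*\in\op{zer}(\bar{\mc A}+\bar{\mc B})$, which is nonempty by Lemma~\ref{lemma_zero}, and denote $v^*:=-\bar{\mc A}(\bs\omega^*)\in\bar{\mc B}(\bs\omega^*)$. By Lemma~\ref{lemma_op}, $\bar{\mc A}$ is monotone and Lipschitz and $\bar{\mc B}$ is maximally monotone, and these are the only structural facts about the operators I shall use.

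\textbf{Step 1 (inclusion form of the update).} First, I would rewrite the resolvent step in \eqref{algo} as the inclusion
\begin{equation*}
\Phi(\bar{\bs\omega}^{k}-\bs\omega^{k+1}) -\hat{\mc A}(\bs\omega^{k},\xi^{k}) \in \bar{\mc B}(\bs\omega^{k+1}),
\end{equation*}
and then test it against $\bs\omega^{k+1}-\bs\omega^*$ using monotonicity of $\bar{\mc B}$ (the pairing with $v^*$ in $\bar{\mc B}(\bs\omega^*)$). After splitting $\hat{\mc A}(\bs\omega^k,\xi^k)=\bar{\mc A}(\bs\omega^k)+\varepsilon_k$ and decomposing $\bs\omega^{k+1}-\bs\omega^*=(\bs\omega^{k+1}-\bs\omega^k)+(\bs\omega^k-\bs\omega^*)$, monotonicity of $\bar{\mc A}$ absorbs one of the two inner products and leaves a Lipschitz cross term and a noise term.

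\textbf{Step 2 (energy/Lyapunov inequality).} The averaging rule $\bar{\bs\omega}^k=(1-\delta)\bs\omega^k+\delta\bar{\bs\omega}^{k-1}$ is what makes the golden ratio algorithm work; I would apply the elementary identity
\begin{equation*}
2\langle \Phi(\bar{\bs\omega}^k-\bs\omega^{k+1}),\bs\omega^{k+1}-\bs\omega^*\rangle = \norm{\bar{\bs\omega}^k-\bs\omega^*}_\Phi^2 - \norm{\bs\omega^{k+1}-\bs\omega^*}_\Phi^2 -\norm{\bar{\bs\omega}^k-\bs\omega^{k+1}}_\Phi^2
\end{equation*}
together with the corresponding identity applied to $\bar{\bs\omega}^k$ in terms of $\bs\omega^k$ and $\bar{\bs\omega}^{k-1}$. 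After Young's inequality on the $\bar{\mc A}$-Lipschitz cross term, these combine (exactly as in Malitsky, Math. Program.\ 2019) into an inequality of the form
\begin{equation*}
V_{k+1} \le V_k - \eta\norm{\bs\omega^{k+1}-\bs\omega^k}_\Phi^2 + 2\langle \varepsilon_k,\bs\omega^*-\bs\omega^{k+1}\rangle,
\end{equation*}
where $V_k:=\norm{\bar{\bs\omega}^{k-1}-\bs\omega^*}_\Phi^2+\tfrac{\delta}{1-\delta}\norm{\bs\omega^k-\bar{\bs\omega}^{k-1}}_\Phi^2$ (or a close variant), and $\eta>0$ is positive thanks to the step size bound in Assumption~\ref{ass_step1} and $\delta\ge 1/\varphi$ in Assumption~\ref{ass_delta}.

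\textbf{Step 3 (taming the stochastic error).} This is the main obstacle: $\bs\omega^{k+1}$ depends on $\xi^k$, so $\varepsilon_k$ and $\bs\omega^{k+1}$ are correlated and one cannot directly use $\EEk{\varepsilon_k}=0$. I would add and subtract $\bs\omega^k$ (which is $\mc F_k$-measurable) to write
\begin{equation*}
2\langle \varepsilon_k,\bs\omega^*-\bs\omega^{k+1}\rangle = 2\langle \varepsilon_k,\bs\omega^*-\bs\omega^k\rangle + 2\langle \varepsilon_k,\bs\omega^k-\bs\omega^{k+1}\rangle,
\end{equation*}
take $\EEk{\cdot}$ so the first term vanishes by Assumption~\ref{ass_error}, and apply Young's inequality $2\langle \varepsilon_k,\bs\omega^k-\bs\omega^{k+1}\rangle\le \eta\norm{\bs\omega^{k+1}-\bs\omega^k}_\Phi^2 + \tfrac{1}{\eta}\norm{\Phi^{-1/2}\varepsilon_k}^2$ to absorb the $\bs\omega^{k+1}-\bs\omega^k$ quadratic term into the negative part of Step~2. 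The remaining noise contribution is $\EEk{\norm{\varepsilon_k}^2}$, which by Assumption~\ref{ass_variance} with $p\ge 2$ is bounded by $\tfrac{C}{S_k}(\sigma(\bs\omega^*)^2+\sigma_0^2\norm{\bs\omega^k-\bs\omega^*}^2)$; Assumption~\ref{ass_batch} makes $1/S_k$ summable, and the $\norm{\bs\omega^k-\bs\omega^*}^2$ factor is handled by noting that this is bounded by $\tfrac{1}{c_\Phi}V_k$ so the inequality becomes
\begin{equation*}
\EEk{V_{k+1}} \le (1+\theta_k)V_k - \tfrac{\eta}{2}\norm{\bs\omega^{k+1}-\bs\omega^k}_\Phi^2 + \zeta_k,
\end{equation*}
with $\sum \theta_k<\infty$ and $\sum\zeta_k<\infty$ a.s.

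\textbf{Step 4 (Robbins--Siegmund and cluster point).} Applying the Robbins--Siegmund lemma yields a.s.\ convergence of $(V_k)$ and hence boundedness of $(\bs\omega^k)$, together with $\sum\norm{\bs\omega^{k+1}-\bs\omega^k}_\Phi^2<\infty$ a.s. Along a bounded subsequence with $\bs\omega^{k_j}\to\tilde{\bs\omega}$, the vanishing increments together with the inclusion in Step~1, demiclosedness of the maximally monotone operator $\bar{\mc A}+\bar{\mc B}$, and the summability of $\varepsilon_k$ in $L^2$, imply $\tilde{\bs\omega}\in\op{zer}(\bar{\mc A}+\bar{\mc B})$. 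Finally, applying the standard argument that $V_k$ converges for every $\bs\omega^*\in\op{zer}(\bar{\mc A}+\bar{\mc B})$ (which is what the analysis above actually proves, pathwise) gives uniqueness of the cluster point and hence a.s.\ convergence of the whole sequence $(\bs\omega^k)$, and thus of $(\bs x^k)$, to a point whose $\bs x$-component is a v-SGNE by Lemma~\ref{lemma_zero}(i).
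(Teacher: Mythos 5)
Your overall architecture (Malitsky-style energy estimate in the $\Phi$-metric, martingale/Young split of the noise, Robbins--Siegmund, cluster-point identification) is the same route the paper takes, but there is a genuine gap in the core estimate of Steps 1--2. Testing only the inclusion $\Phi(\bar{\bs\omega}^{k}-\bs\omega^{k+1})-\hat{\mc A}(\bs\omega^{k},\xi^{k})\in\bar{\mc B}(\bs\omega^{k+1})$ against $\bs\omega^{k+1}-\bs\omega^{*}$ and splitting $\bs\omega^{k+1}-\bs\omega^{*}=(\bs\omega^{k+1}-\bs\omega^{k})+(\bs\omega^{k}-\bs\omega^{*})$, monotonicity kills $\langle\bar{\mc A}(\bs\omega^{k})-\bar{\mc A}(\bs\omega^{*}),\bs\omega^{k}-\bs\omega^{*}\rangle$ but leaves the cross term $\langle\bar{\mc A}(\bs\omega^{k})-\bar{\mc A}(\bs\omega^{*}),\bs\omega^{k+1}-\bs\omega^{k}\rangle$. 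This is not a ``Lipschitz cross term'' you can absorb: Young's inequality turns it into a multiple of $\norm{\bs\omega^{k}-\bs\omega^{*}}^{2}$ with a coefficient proportional to the fixed step size, which cannot be dominated by the negative terms of the energy identity; this is exactly why the plain (relaxed) forward--backward iteration fails for merely monotone, non-cocoercive operators (think of a rotation operator). The ingredient that makes the golden-ratio analysis work, and which your plan never states, is the \emph{second} resolvent inequality: the characterization of $\bs\omega^{k}$ from iteration $k-1$ (with the stale estimate $\hat{\mc A}(\bs\omega^{k-1},\xi^{k-1})$) tested at the new point $\bs\omega^{k+1}$, combined with Lemma~\ref{lemma_algo}(i) to rewrite $\bs\omega^{k}-\bar{\bs\omega}^{k-1}=\tfrac{1}{\delta}(\bs\omega^{k}-\bar{\bs\omega}^{k})$. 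Adding the two inequalities replaces the uncontrollable term by $\langle\Phi^{-1}(\bar{\mc A}(\bs\omega^{k})-\bar{\mc A}(\bs\omega^{k-1})),\bs\omega^{k}-\bs\omega^{k+1}\rangle$, which Lipschitz continuity and Young bound by consecutive-iterate differences $\normsq{\bs\omega^{k}-\bs\omega^{k-1}}+\normsq{\bs\omega^{k}-\bs\omega^{k+1}}$; these are then absorbed using the step-size bound of Assumption~\ref{ass_step1} and $\delta\geq 1/\varphi$, with the $\normsq{\bs\omega^{k}-\bs\omega^{k-1}}$ term carried inside the Lyapunov function. Your claimed inequality in Step 2 is of the right shape, but it cannot be derived from the decomposition you describe; citing ``exactly as in Malitsky'' papers over precisely the step that needs to be done.

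A consequence of the same omission: once the previous iteration's inequality is used, the noise enters also through $\varepsilon^{k-1}$, so besides the martingale term $\langle\Phi^{-1}\varepsilon^{k},\bs\omega^{*}-\bs\omega^{k}\rangle$ (zero conditional mean, as you correctly arrange by recentering at the $\mc F_k$-measurable $\bs\omega^{k}$) you must also bound $\langle\Phi^{-1}(\varepsilon^{k}-\varepsilon^{k-1}),\bs\omega^{k}-\bs\omega^{k+1}\rangle$, which is where Proposition~\ref{prop_variance} and the summability of $1/S_{k}$ (Assumption~\ref{ass_batch}) are really consumed. Your treatment of the state-dependent variance via a $(1+\theta_k)V_k$ factor in Robbins--Siegmund is fine (indeed slightly cleaner than the paper's bookkeeping), and your Step 4 differs only cosmetically from the paper, which identifies the cluster point through the residual $\op{res}(\bs\omega^{k})\to 0$ rather than through demiclosedness; either closing argument is acceptable once the energy inequality is actually established.
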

\begin{proof}
See Appendix \ref{app_sec_algo}.
\end{proof}

In the following, let us consider the case where the local nonsmooth cost is determined by the local constraints, i.e., $f_i(x_i)=\iota_{\Omega_i}(x_i)$. Then, the problem is slightly different and we can show that the algorithm converges under a weaker assumption than monotonicity.

The first difference is that the operator $\bar{\mc B}$ is now given by 
\begin{equation*}
\bar{\mc{B}} :\left[\begin{array}{l}
\bs{x} \\ 
\bs z\\
\bs \lambda
\end{array}\right] \rightrightarrows\left[\begin{array}{c}
\mathrm N_{\bs\Omega}(\bs x) \\ 
{\bf{0}}\\
\mathrm{N}_{\RR_{ \geq 0}^{m}}(\lambda)
\end{array}\right],
\end{equation*}
hence, we have a projection instead of the proximal operator:
\begin{equation}\label{algo_proj}
\begin{aligned} 
\bar{\bs \omega}^{k} &=(1-\delta)\bs \omega^k+\delta\bar{\bs \omega}^{k-1} \\ 
\bs \omega^{k+1} &=\op{proj}_{\bs{\mc Z}'}\left(\bar{\bs \omega}^{k}-\Phi^{-1}\hat{\mc A}(\bs \omega^k)\right),
\end{aligned}
\end{equation}
where $\bs{\mc Z}'=\bs\Omega\times\RR^{mN}\times\RR^{mN}_{\geq0}$. We also call $\bs{\mc Z}=\bs{\mc X}\times\RR^{mN}\times\RR^{mN}_{\geq0}$ and $\bs{\mc Z}^*$ the set of v-SGNE, i.e., $\bs{\mc Z}^*=\op{zer}(\bar{\mc A}+\bar{\mc B})$.
To show convergence, let the mapping $\bar{\mc A}$ satisfy the following assumption.
\begin{assumption}[Almost restricted pseudo monotonicity]\label{ass_weaker}
The operator $\bar{\mc A}$ in \eqref{eq_expanded} is such that for all $(\bs\omega,\bs \omega^*)\in\bs{\mc Z}\times\bs{\mc Z}^*$
\begin{equation*}\label{eq_weak}
\langle \bar{\mc A}(\bs \omega),\bs \omega-\bs\omega^*\rangle\geq 0.\vspace{-.55cm}
\end{equation*}\fineass
\end{assumption}

\begin{remark}\label{remark_weak}
The property in Assumption \ref{ass_weaker} is implied by both monotonicity and pseudomonotonicity but it does not necessarily hold for $\bar{\mc A}$ if we assume it directly on $\FF$. It corresponds to the concept of weak solution of a VI, compared to that of strong solution as in \eqref{eq_SVI} \cite{dang2015}. This assumption is also used in \cite{staudigl2019} and \cite{solodov1999}; an example of a mapping that satisfies (\ref{eq_weak}) is in \cite[Equation 2.4]{dang2015}.
\fineass\end{remark}

We can now state the corresponding convergence result.

\begin{corollary}\label{cor_coco}
Let Assumptions \ref{ass_constr}--\ref{ass_variance} and \ref{ass_delta}--\ref{ass_weaker} hold. Then, the sequence generated by Algorithm \ref{algo_i} in \eqref{algo_proj} with $\hat{\mc A}$ as in Remark \ref{remark_error} converges a.s. to a v-SGNE of the game in \eqref{eq_game}.
\end{corollary}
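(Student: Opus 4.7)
The plan is to adapt the convergence proof of Theorem \ref{theo_sgne} to the projected update \eqref{algo_proj}, replacing invocations of the monotonicity of $\bar{\mc A}$ by the weaker Assumption \ref{ass_weaker}. The key observation is that the projection step admits a clean variational characterization in the $\Phi$-weighted metric: for every $\bs\omega^*\in\bs{\mc Z}^*\subseteq\bs{\mc Z}'$,
\[
\langle\Phi(\bs\omega^{k+1}-\bar{\bs\omega}^k)+\hat{\mc A}(\bs\omega^k),\bs\omega^*-\bs\omega^{k+1}\rangle\geq 0.
\]
Combining this with the three-point identity and splitting $\hat{\mc A}(\bs\omega^k)=\bar{\mc A}(\bs\omega^k)+\varepsilon_k$ (in the sense of Remark \ref{remark_error}), I obtain the pre-Lyapunov recursion
\[
\|\bs\omega^{k+1}-\bs\omega^*\|_\Phi^2\leq\|\bar{\bs\omega}^k-\bs\omega^*\|_\Phi^2-\|\bs\omega^{k+1}-\bar{\bs\omega}^k\|_\Phi^2+2\langle\bar{\mc A}(\bs\omega^k),\bs\omega^*-\bs\omega^{k+1}\rangle+2\langle\varepsilon_k,\bs\omega^*-\bs\omega^{k+1}\rangle.
\]

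I would then decompose $\bs\omega^*-\bs\omega^{k+1}=(\bs\omega^*-\bs\omega^k)+(\bs\omega^k-\bs\omega^{k+1})$ inside the $\bar{\mc A}$-inner product. The first piece satisfies $\langle\bar{\mc A}(\bs\omega^k),\bs\omega^*-\bs\omega^k\rangle\leq 0$ by Assumption \ref{ass_weaker} applied at $\bs\omega=\bs\omega^k$, which is precisely the inequality that monotonicity of $\bar{\mc A}$ (Lemma \ref{lemma_op}(1)) supplied in Theorem \ref{theo_sgne}; Assumption \ref{ass_weaker} delivers it directly, without invoking $\bar{\mc A}(\bs\omega^*)$. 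The second piece is controlled by Young's inequality combined with the Lipschitz continuity of $\bar{\mc A}$ (which is preserved from Lemma \ref{lemma_op}(1), since only the monotonicity part of Assumption \ref{ass_mono} is being dropped), bounding it in terms of $\|\bs\omega^k-\bs\omega^{k+1}\|_\Phi^2$ and $\|\bs\omega^k-\bs\omega^*\|_\Phi^2$. The averaging line in \eqref{algo_proj} provides a convex-combination identity in the $\Phi$-norm which, under the golden-ratio bound on $\delta$ (Assumption \ref{ass_delta}) and the step-size bound (Assumption \ref{ass_step1}), lets all the resulting terms be collected into a Lyapunov function $V_k=\|\bs\omega^k-\bs\omega^*\|_\Phi^2+\theta\|\bs\omega^k-\bar{\bs\omega}^{k-1}\|_\Phi^2$ for a suitable $\theta>0$, producing a descent-plus-error recursion.

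Taking the conditional expectation with respect to $\mc F_k$, Assumption \ref{ass_error} eliminates the linear-in-$\varepsilon_k$ cross term, while the bilinear residual is bounded via Cauchy--Schwarz, Assumption \ref{ass_variance}, and the summability of $1/S_k$ provided by Assumption \ref{ass_batch}, yielding $\EEk{V_{k+1}}\leq V_k-c\,\EEk{\|\bs\omega^{k+1}-\bar{\bs\omega}^k\|_\Phi^2}+\eta_k$ with $\sum_k\eta_k<\infty$ almost surely. The Robbins--Siegmund supermartingale lemma then gives a.s. boundedness of $(\bs\omega^k)$, existence a.s. of $\lim_k\|\bs\omega^k-\bs\omega^*\|_\Phi$ for every $\bs\omega^*\in\bs{\mc Z}^*$, and $\|\bs\omega^{k+1}-\bar{\bs\omega}^k\|_\Phi\to 0$ a.s. An Opial-type argument then extracts a cluster point $\bs\omega^\infty$, passes to the limit in the projection VI characterization above using this vanishing residual together with continuity of $\bar{\mc A}$ and the a.s.\ vanishing of the approximation error, and identifies $\bs\omega^\infty\in\bs{\mc Z}^*$; Fej\'er monotonicity then promotes subsequential to full a.s. convergence of $(\bs x^k)_{k\in\NN}$ to a v-SGNE.

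The main obstacle is that Assumption \ref{ass_weaker} is inherently a one-sided inequality pinned at the solution set and, unlike genuine monotonicity of $\bar{\mc A}$, cannot be dualized through $\bar{\mc B}$ via the inclusion $-\bar{\mc A}(\bs\omega^*)\in\bar{\mc B}(\bs\omega^*)$. This forces the $\bar{\mc A}$-term to be routed strictly through the decomposition above and blocks several symmetric manipulations available in Theorem \ref{theo_sgne}. In particular, verifying that a cluster point $\bs\omega^\infty$ actually lies in $\bs{\mc Z}^*$ (rather than merely satisfying a weak-solution analogue) requires writing the limiting VI in terms of $\bar{\mc A}(\bs\omega^\infty)$ itself, and it is precisely the Lipschitz continuity of $\bar{\mc A}$ combined with the a.s.\ boundedness from Robbins--Siegmund that closes this gap.
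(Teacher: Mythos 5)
Your overall plan (redo Theorem \ref{theo_sgne} with the projection inequality \eqref{proj} in place of \eqref{prox}, so the $H$-terms disappear, and replace the monotonicity step \eqref{mono} by $\langle\bar{\mc A}(\bs\omega^k),\bs\omega^k-\bs\omega^*\rangle\geq0$ from Assumption \ref{ass_weaker}) is the paper's intent. But the way you actually run the recursion has a genuine gap: you use only the projection inequality at iteration $k+1$ and then dispose of the resulting term $2\langle\bar{\mc A}(\bs\omega^k),\bs\omega^k-\bs\omega^{k+1}\rangle$ via Young's inequality plus Lipschitz continuity, ``in terms of $\|\bs\omega^k-\bs\omega^{k+1}\|_\Phi^2$ and $\|\bs\omega^k-\bs\omega^*\|_\Phi^2$''. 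With the fixed step size of Assumption \ref{ass_step1} this cannot close: since in the generalized setting $\bar{\mc A}(\bs\omega^*)\neq0$, any such bound leaves either a constant per-iteration term proportional to $\|\bar{\mc A}(\bs\omega^*)\|^2$ or a term $c\,\|\bs\omega^k-\bs\omega^*\|_\Phi^2$ with a constant coefficient; in the Robbins--Siegmund lemma these correspond to $\eta_k$ and $\chi_k$ that are not summable, so the recursion $\EEk{V_{k+1}}\leq V_k-c\,\EEk{\|\bs\omega^{k+1}-\bar{\bs\omega}^k\|_\Phi^2}+\eta_k$ you assert does not follow from your decomposition. This is not a technicality: a one-inequality (plain relaxed forward--backward) estimate is precisely what fails for merely monotone operators with constant steps, which is why the algorithm and its analysis are of golden-ratio type in the first place.

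What the paper's proof (that of Theorem \ref{theo_sgne}, reused verbatim for this corollary) does instead is invoke the projection inequality twice: at step $k+1$ tested with $\bs\omega^*$, and at step $k$ tested with $\bs\omega^{k+1}$, then add them using Lemma \ref{lemma_algo}(i), exactly as in \eqref{stepNk2}--\eqref{stepNk3}. The dangerous term then appears only as the difference $2\langle\Phi^{-1}(\bar{\mc A}(\bs\omega^k)-\bar{\mc A}(\bs\omega^{k-1})),\bs\omega^k-\bs\omega^{k+1}\rangle$, which Lipschitz continuity bounds by $\ell_{\bar{\mc A}}\|\Phi^{-1}\|\left(\|\bs\omega^k-\bs\omega^{k-1}\|^2+\|\bs\omega^k-\bs\omega^{k+1}\|^2\right)$, i.e., by consecutive-iterate differences that the step bound of Assumption \ref{ass_step1} and the golden-ratio bound on $\delta$ absorb into the Lyapunov quantities; Assumption \ref{ass_weaker} enters only as the substitute for \eqref{mono}, and the $H$-terms drop because \eqref{proj} replaces \eqref{prox}. (Incidentally, in your one-inequality argument the condition $\delta\geq1/\varphi$ plays no identifiable role, which is another sign the route is off.) The rest of your outline --- zero-mean error, Proposition \ref{prop_variance} with summable $1/S_k$, Robbins--Siegmund, residual/cluster-point identification --- matches the paper, but it rests on a recursion your decomposition does not deliver; to repair the proposal, reinstate the previous-step projection inequality and keep the remainder of the Theorem \ref{theo_sgne} argument unchanged.
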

\begin{proof}
See Appendix \ref{app_sec_algo}.
\end{proof}


\section{Convergence under cocoercivity}

\begin{algorithm}[t]
\caption{Stochastic Relaxed Preconditioned Forward Backward (SRpFB)}\label{algo_i2}
Initialization: $x_i^0 \in \Omega_i, \lambda_i^0 \in \RR_{\geq0}^{m},$ and $z_i^0 \in \RR^{m} .$
Iteration $k$: Agent $i$\\
(1) Updates the variables
$$\begin{aligned}
&\bar x_{i}^{k}=(1-\delta)x_i^k+\delta\bar x_i^{k-1}\\
&\bar z_{i}^{k}=(1-\delta)z_i^k+\delta\bar z_i^{k-1}\\
&\bar \lambda_{i}^{k}=(1-\delta)\lambda_i^k+\delta\bar \lambda_i^{k-1}\\
\end{aligned}$$
(2) Receives $x_j^k$ for all $j \in \mathcal{N}_{i}^{J}, \lambda_{j}^k$ for $j \in \mathcal{N}_{i}^{\lambda}$ then updates:
$$\begin{aligned}
&x_i^{k+1}=\op{proj}_{\Omega_i}[\bar x_i^k-\alpha_{i}(\hat F_{i}(x_i^k, \boldsymbol{x}_{-i}^k,\xi_i^k)-A_{i}^\top \lambda_i^k)]\\
&z_i^{k+1}=\bar z_i^k-\nu_{i}\textstyle{ \sum_{j \in \mathcal{N}_{i}^{\lambda}}} w_{i,j}(\lambda_i^k-\lambda_{j}^k)\\
\end{aligned}$$
(3) Receives $z_{j, k+1}$ for all $j \in \mathcal{N}_{i}^{\lambda}$ then updates:
$$\begin{aligned}
&\lambda_i^{k+1}=\op{proj}_{\RR_{+}^{m}}\left[\bar\lambda_i^k+\sigma_{i}\left(A_{i}(2x_i^{k+1}-x_i^k)-b_{i}\right)\right.\\
&\qquad+\sigma_{i}\textstyle{\sum_{j \in \mathcal{N}_{i}^{\lambda}}} w_{i,j}\left(2(z_i^{k+1}-z_{j}^{k+1})-(z_i^k-z_{j}^k)\right)\\
&\qquad-\sigma_{i}\textstyle{\sum_{j \in \mathcal{N}_{i}^{\lambda}}} \left.w_{i,j}(\lambda_i^k-\lambda_{j}^k)\right]\\
\end{aligned}$$
\end{algorithm}

Having a mild monotonicity condition on the pseudogradient implies taking a small, although constant, step size sequence. However, if the pseudogradient mapping satisfies a stronger monotonicity assumption, a larger step size can be chosen. 
One possibility is that the pseudogradient satisfies the cut property (described in details in Remark \ref{remark_cut} later on), which is hard to check on the problem data but it follows directly from cocoercivity. 
\begin{assumption}[Cocoercivity]\label{ass_coco}
$\FF$ in \eqref{eq_grad} is $\beta$-cocoercive for some $\beta>0$.
\fineass\end{assumption}
For instance, every symmetric, affine, monotone mapping is cocoercive (see also \cite[Example 2.9.25]{facchinei2007}).
The operator splitting that we used in Section \ref{sec_algo_sgnep} is not cocoercive, even when the mapping $\FF$ is. For this reason, here we have to consider a different splitting. Moreover, to obtain distributed iterations, we consider affine coupling constraints.
\begin{assumption}[Affine coupling constraints]
$g(\bs x)=A\bs x -b$, where $A=[A_1,\dots,A_N]\in\RR^{m\times n}$ and $b\in\RR^m$. 
\fineass\end{assumption}
\cite[Theorem 3.1]{facchinei2007vi}, \cite[Theorem 3.1]{auslender2000} holds also in this case. To obtain the extended operator as in \eqref{eq_expanded}, let ${\bf{A}}=\op{diag}\{A_1,\dots,A_N\}\in\RR^{Nm\times n}$ where $A_i$ represents the individual coupling constraints and let $\bs b\in\RR^{Nm\times n}$. Then, the operator $\mc T$ in \eqref{eq_T} can be split as
\begin{equation}\label{eq_expanded2}
\begin{aligned}
\bar{\mc{C}} &:\left[\begin{array}{l}
\bs{x} \\
\bs z\\
\bs \lambda
\end{array}\right] \mapsto\left[\begin{array}{c}
\FF(\bs{x}) \\ 
0\\
{\bf{L}}\bs\lambda+\bs b
\end{array}\right]\\
\bar{\mc{D}} &:\left[\begin{array}{l}
\bs{x} \\ 
\bs z\\
\bs \lambda
\end{array}\right] \mapsto\left[\begin{array}{c}
\mathrm{N}_{\Omega}(\bs{x})  \\ 
{\bf{0}}\\
\mathrm{N}_{\RR_{ \geq 0}^{m}}(\lambda)
\end{array}\right]+\left[\begin{array}{ccc}
0 & 0 & {\bf{A}}^{\top} \\ 
0 & 0 & {\bf{L}}\\
-{\bf{A}} & -{\bf{L}} & 0
\end{array}\right]\left[\begin{array}{l}
\bs{x} \\ 
\bs z\\
\bs \lambda
\end{array}\right],
\end{aligned}
\end{equation}
where $\bar{\mc C}$ is the first part of the operator $\bar{\mc A}$ in \eqref{eq_expanded} and the second part is in $\bar{\mc D}$, including the linear constraints.
Lemma \ref{lemma_zero} guarantees that a zero of $\bar{\mc C}+\bar{\mc D}$ exists. Moreover, $\bar{\mc C}$ and $\bar{\mc D}$ in \eqref{eq_expanded2} have the following properties.

\begin{lemma}\label{lemma_op2}
Let Assumption \ref{ass_coco} hold and let $\Phi\succ 0$. The operators $\bar{\mc C}$ and $\bar{\mc D}$ in \eqref{eq_expanded2} have the following properties:
\begin{enumerate}
\item[(i)] $\bar{\mc C}$ is $\theta$-cocoercive where $0<\theta \leq\min \left\{\frac{1}{2 d^{*}}, \beta\right\}$ and $d^*$ is the maximum weighted degree of $\mc G^\lambda$;
\item[(ii)] The operator $\bar{\mc D}$ is maximally monotone;
\item[(iii)] $\Phi^{-1}\bar{\mc C}$ is $\theta\gamma$-cocoercive where $\gamma=\frac{1}{|\Phi^{-1}|}$;
\item[(iv)] $\Phi^{-1}\bar{\mc D}$ is maximally monotone.
\end{enumerate}
\end{lemma}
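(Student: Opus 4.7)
The plan is to mirror the structure of the proof of Lemma \ref{lemma_op}, with monotonicity replaced by cocoercivity wherever possible. For part (i), I would exploit the block form of $\bar{\mc C}$: the difference $\bar{\mc C}(\bs\omega)-\bar{\mc C}(\bs\omega')$ has only two nonzero blocks, namely $\FF(\bs x)-\FF(\bs x')$ in the primal component and ${\bf L}(\bs\lambda-\bs\lambda')$ in the dual component (the constant $\bs b$ drops out and the $\bs z$-block is zero). Both $\langle \bar{\mc C}(\bs\omega)-\bar{\mc C}(\bs\omega'),\bs\omega-\bs\omega'\rangle$ and $\|\bar{\mc C}(\bs\omega)-\bar{\mc C}(\bs\omega')\|^2$ then decompose additively across these two blocks, so I can bound them separately. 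Assumption \ref{ass_coco} delivers the primal contribution with constant $\beta$; for the dual block, the spectral inequality $\langle {\bf L}h,h\rangle\geq \lambda_{\max}({\bf L})^{-1}\|{\bf L}h\|^2$, valid for any symmetric positive semidefinite ${\bf L}$, together with the standard Laplacian bound $\lambda_{\max}(L)\leq 2d^*$ available under Assumption \ref{ass_Jraph}, yields cocoercivity of the dual block with constant $1/(2d^*)$. Choosing $\theta\leq \min(\beta,1/(2d^*))$ then dominates both contributions simultaneously.

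For part (ii), I would split $\bar{\mc D}=\bar{\mc D}_1+\bar{\mc D}_2$, where $\bar{\mc D}_1$ collects the normal cones $\mathrm N_{\bs\Omega}\times\{0\}\times \mathrm N_{\RR^m_{\geq 0}}$ and $\bar{\mc D}_2$ is multiplication by the block-skew matrix appearing in \eqref{eq_expanded2}. The first piece is a Cartesian product of normal cones to nonempty closed convex sets (Assumption \ref{ass_constr}), hence maximally monotone. The second piece is single-valued, continuous, has full domain, and is monotone because $\langle Sh,h\rangle=0$ for any skew matrix $S$; it is therefore maximally monotone. Their sum is maximally monotone by the standard sum rule (e.g.\ Bauschke--Combettes, Corollary 24.4). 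Parts (iii) and (iv) would then follow by passing to the $\Phi$-induced inner product: the identity $\langle \Phi^{-1}u,h\rangle_\Phi=\langle u,h\rangle$ transfers cocoercivity of $\bar{\mc C}$ into cocoercivity of $\Phi^{-1}\bar{\mc C}$ in $\langle\cdot,\cdot\rangle_\Phi$, and the bound $\|\Phi^{-1}v\|_\Phi^2=\langle v,\Phi^{-1}v\rangle\leq \|\Phi^{-1}\|\,\|v\|^2$ converts the constant $\theta$ into $\theta\gamma$ with $\gamma=1/\|\Phi^{-1}\|$. The same reinterpretation preserves maximal monotonicity of $\bar{\mc D}$ under the preconditioner, since $\Phi\succ 0$ is invertible.

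The main subtlety I anticipate is in the Laplacian estimate of (i): since ${\bf L}=L\otimes\op{Id}_m$ has a nontrivial kernel $\uno_N\otimes\RR^m$, a naive largest-over-smallest eigenvalue argument must be avoided. The correct observation is that $\|{\bf L}h\|^2$ only involves the range component of $h$, so the inequality $\langle {\bf L}h,h\rangle\geq \lambda_{\max}({\bf L})^{-1}\|{\bf L}h\|^2$ remains valid on all of $\RR^{Nm}$ by expanding $h$ in an orthonormal eigenbasis of ${\bf L}$ and discarding the kernel contribution. Beyond that, the argument is a routine repackaging of the proof of Lemma \ref{lemma_op} under the stronger hypothesis Assumption \ref{ass_coco}.
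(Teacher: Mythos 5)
Your proposal is correct and follows essentially the same route as the paper: the same block decomposition of $\bar{\mc C}$ with $\beta$-cocoercivity of $\FF$ and $\tfrac{1}{2d^*}$-cocoercivity of $\bf L$ for (i), the same normal-cone-plus-skew-matrix splitting of $\bar{\mc D}$ for (ii), and the same transfer to the $\Phi$-induced inner product for (iii)--(iv). The only differences are cosmetic: you prove the Laplacian cocoercivity directly by a spectral inequality where the paper invokes the Baillon--Haddad theorem, and you write out the $\Phi$-norm calculation that the paper delegates to \cite[Lemma 7]{yi2019}.
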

\begin{proof}
See Appendix \ref{app_op}.
\end{proof}


Also in this case, we use an approximation to compute the expected value, therefore, similarly to \eqref{eq_A_hat},
\begin{equation}\label{eq_A_hat2}
\hat{\mc C}:\left[\begin{array}{c}
(\bs{x},\xi) \\ 
\bs z\\
\bs \lambda
\end{array}\right] \mapsto\left[\begin{array}{c}
\hat F(\bs{x},\xi) \\ 
0\\
\bf L\bs\lambda+b
\end{array}\right].
\end{equation}
In this case, the SRFB algorithm is given by
\begin{equation}\label{algo2}
\begin{aligned} 
\bar{\bs \omega}^{k} &=(1-\delta)\bs \omega^k+\delta\bar{\bs \omega}^{k-1} \\ 
\bs \omega^{k+1} &=(\op{Id}+\Psi^{-1}\bar{\mc D})^{-1}(\bar{\bs \omega}^{k}-\Psi^{-1}\hat{\mc C}(\bs \omega^k))
\end{aligned}
\end{equation}
where the preconditioning matrix $\Psi$ is defined as
\begin{equation}\label{eq_phi2}
\Psi=\left[\begin{array}{ccc}
\alpha^{-1} & 0 & -{\bf{A}}^\top\\
0 & \nu^{-1} & -{\bf{L}}\\
-{\bf{A}} & -{\bf{L}} & \sigma^{-1}
\end{array}\right],
\end{equation} 
with $\alpha^{-1}$, $\nu^{-1}$, $\sigma^{-1}$ defined as in \eqref{eq_phi} and $\bf A$ and $\bf L$ are, respectively, the extended constraints and Laplacian matrix. We note that this type of preconditioning cannot be used for nonlinear coupling constraints $g(\bs x)\leq 0$ as in \eqref{collective_set}.

The distributed SRFB iterations read as in Algorithm \ref{algo_i2}. We note that Algorithm \ref{algo_i2} differs from Algorithm \ref{algo_i} in the computation of the dual variable $\bs\lambda^{k+1}$ which, in this case, depends also on the variables $\bs x^{k+1}$ and $\bs z^{k+1}$.

\subsection{Convergence analysis}
Since the matrix $\Psi$ must be positive definite \cite{belgioioso2018}, we postulate the following assumption.
\begin{assumption}[Step size sequence]\label{ass_step2}
The step size sequence is such that, given $\gamma>0$, for every agent $i\in\mc I$
$$\begin{aligned}
0&<\alpha_{i} \leq\textstyle{\left(\gamma+\max _{j\in\{1, \ldots, n_{i}\}}\sum\nolimits_{k=1}^{m}|[A_{i}^\top]_{j k}|\right)^{-1}} \\ 
0&<\nu_{i} \leq\textstyle{\left(\gamma+2 d_{i}\right)^{-1}}\\
0&<\sigma_{i} \leq\textstyle{\left(\gamma+2 d_{i}+\max _{j\in\{1, \ldots, m\}}\sum\nolimits_{k=1}^{n_{i}}|[A_{i}]_{j k}|\right)^{-1}}\\
\end{aligned}$$
where $[A_i^\top]_{jk}$ indicates the entry $(j,k)$ of the matrix $A_i^\top$.
Moreover,
$$\norm{\Psi^{-1}}\leq\tfrac{1}{\delta(2\ell_{\bar{\mc C}}-1)}$$
where $\delta$ is the averaging parameter and $\ell_{\bar{\mc C}}$ is the Lipschitz constant of $\bar{\mc C}$.
\fineass\end{assumption}

We are now ready to state the convergence result.
\begin{theorem}\label{theo_coco}
Let Assumptions \ref{ass_constr}--\ref{ass_delta}, \ref{ass_coco}, \ref{ass_step2} hold. Then the sequence $(\bs x_k)_{k\in\NN}$ generated by Algorithm \ref{algo_i2} with $\hat F$ as in \eqref{eq_F_SAA} converges a.s. to a v-SGNE of the game in \eqref{eq_game}.
\end{theorem}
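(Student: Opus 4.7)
The plan is to prove Theorem~\ref{theo_coco} by a stochastic quasi-Fej\'er argument in the $\Psi$-induced norm, exploiting cocoercivity of $\bar{\mc C}$ instead of the bare Lipschitz/monotonicity combination used for Theorem~\ref{theo_sgne}. First I would verify that $\Psi\succ0$ under Assumption~\ref{ass_step2}: the off-diagonal blocks $\bf A$, $\bf L$ are controlled by the Gershgorin-type bounds on $\alpha_i,\nu_i,\sigma_i$, which is the standard argument from \cite{belgioioso2018}. This makes $\|\cdot\|_\Psi$ a genuine norm and the resolvent $(\op{Id}+\Psi^{-1}\bar{\mc D})^{-1}$ well-defined and firmly nonexpansive in $\|\cdot\|_\Psi$ (since $\Psi^{-1}\bar{\mc D}$ is maximally monotone by Lemma~\ref{lemma_op2}(iv)).

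Next I would establish the key per-iteration inequality. Fix any $\bs\omega^*\in\bs{\mc Z}^*=\op{zer}(\bar{\mc C}+\bar{\mc D})$; by Lemma~\ref{lemma_zero} this set is nonempty and satisfies the fixed-point identity $\bs\omega^*=(\op{Id}+\Psi^{-1}\bar{\mc D})^{-1}(\bs\omega^*-\Psi^{-1}\bar{\mc C}(\bs\omega^*))$. Firm nonexpansiveness of the resolvent in $\|\cdot\|_\Psi$ applied to \eqref{algo2}, together with the decomposition $\hat{\mc C}(\bs\omega^k,\xi^k)=\bar{\mc C}(\bs\omega^k)+\varepsilon_k$ (Remark~\ref{remark_error}), yields
\begin{equation*}
\|\bs\omega^{k+1}-\bs\omega^*\|_\Psi^2 \le \|\bar{\bs\omega}^k-\bs\omega^*\|_\Psi^2 - 2\langle \bar{\mc C}(\bs\omega^k)-\bar{\mc C}(\bs\omega^*),\bs\omega^{k+1}-\bs\omega^*\rangle - 2\langle\varepsilon_k,\bs\omega^{k+1}-\bs\omega^*\rangle_{} + R_k,
\end{equation*}
where $R_k$ collects cross terms that can be absorbed by Young's inequality. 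Using cocoercivity of $\Psi^{-1}\bar{\mc C}$ from Lemma~\ref{lemma_op2}(iii) on $\bs\omega^k$ (rather than $\bs\omega^{k+1}$) and splitting $\bs\omega^{k+1}-\bs\omega^*=(\bs\omega^{k+1}-\bs\omega^k)+(\bs\omega^k-\bs\omega^*)$ should produce a nonpositive quadratic $-c\|\bar{\mc C}(\bs\omega^k)-\bar{\mc C}(\bs\omega^*)\|^2$ with constant $c$ controlled by $\theta\gamma$, provided the step-size bound $\|\Psi^{-1}\|\le(\delta(2\ell_{\bar{\mc C}}-1))^{-1}$ in Assumption~\ref{ass_step2} holds.

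The third step is to handle the averaging $\bar{\bs\omega}^k=(1-\delta)\bs\omega^k+\delta\bar{\bs\omega}^{k-1}$. By convexity of $\|\cdot\|_\Psi^2$,
\begin{equation*}
\|\bar{\bs\omega}^k-\bs\omega^*\|_\Psi^2 \le (1-\delta)\|\bs\omega^k-\bs\omega^*\|_\Psi^2+\delta\|\bar{\bs\omega}^{k-1}-\bs\omega^*\|_\Psi^2,
\end{equation*}
so the natural Lyapunov candidate is $V_k=\|\bs\omega^k-\bs\omega^*\|_\Psi^2+\rho\|\bar{\bs\omega}^{k-1}-\bs\omega^*\|_\Psi^2$ for a suitable $\rho>0$ dictated by Assumption~\ref{ass_delta} (the golden-ratio bound $1/\varphi\le\delta\le 1$ is precisely what makes the $(1,\rho)$ recursion contractive up to the stochastic noise). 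Taking conditional expectation with respect to $\mc F_k$ and using Assumption~\ref{ass_error} to kill the linear noise term $\EEk{\langle\varepsilon_k,\bs\omega^k-\bs\omega^*\rangle}=0$, together with Assumption~\ref{ass_variance} and Assumption~\ref{ass_batch} to bound $\EEk{\|\varepsilon_k\|^2}\le C(1+\|\bs\omega^k-\bs\omega^*\|_\Psi^2)/S_k$ with $\sum_k 1/S_k<\infty$, gives
\begin{equation*}
\EEk{V_{k+1}}\le (1+u_k)V_k - \eta_k + w_k,
\end{equation*}
where $\eta_k\ge 0$ captures the cocoercivity dissipation and $u_k,w_k\ge 0$ are summable.

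Finally I would invoke the Robbins--Siegmund supermartingale convergence theorem to conclude that $V_k$ converges a.s.\ and $\sum_k\eta_k<\infty$ a.s., whence $\bar{\mc C}(\bs\omega^k)\to\bar{\mc C}(\bs\omega^*)$ and $\bs\omega^{k+1}-\bs\omega^k\to 0$ a.s. Boundedness of $(\bs\omega^k)$ then gives a cluster point $\bs\omega^\infty$; passing to the limit in the resolvent identity and using maximal monotonicity of $\bar{\mc D}$ shows $\bs\omega^\infty\in\op{zer}(\bar{\mc C}+\bar{\mc D})$, i.e.\ $\bs x^\infty$ is a v-SGNE by Lemma~\ref{lemma_zero}(i). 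A standard Opial-type argument in the $\Psi$-norm (applicable since $V_k$ converges for every $\bs\omega^*\in\bs{\mc Z}^*$) promotes subsequential convergence to full a.s.\ convergence of the whole sequence.

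The main obstacle is the third step: aligning the averaging-induced inertia with the cocoercivity descent while keeping the stochastic cross-term $\langle\varepsilon_k,\bs\omega^{k+1}-\bs\omega^*\rangle$ under control. The subtlety is that $\bs\omega^{k+1}$ is not $\mc F_k$-measurable, so a na\"ive conditional expectation does not annihilate it; one must first split $\bs\omega^{k+1}=\bs\omega^k+(\bs\omega^{k+1}-\bs\omega^k)$, use Cauchy--Schwarz plus Young on the increment term, and then rely on the summability of $1/S_k$ supplied by Assumption~\ref{ass_batch} to absorb the leftover variance. Calibrating the Young constants against the cocoercivity constant $\theta\gamma$ and the golden-ratio relaxation $\delta$ is where the Assumption~\ref{ass_step2} bound on $\|\Psi^{-1}\|$ is used sharply.
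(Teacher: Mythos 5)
Your route is workable in outline, but it is genuinely different from the paper's, and its hardest step is exactly the one the paper's argument avoids. The paper does not use cocoercivity in the per-iteration estimate at all: it reuses verbatim the two proximal-point inequalities of Theorem \ref{theo_sgne} (the Malitsky two-step trick, which is what produces the crucial negative term $-\tfrac{1}{\delta}\norm{\bs\omega^k-\bar{\bs\omega}^k}^2$ and automatically handles the mismatch between the forward evaluation at $\bs\omega^k$ and the resolvent base point $\bar{\bs\omega}^k$), keeps the inner product $\langle\Phi^{-1}\bar{\mc C}(\bs\omega^k),\bs\omega^k-\bs\omega^*\rangle$ as a nonnegative term inside the Robbins--Siegmund recursion, deduces its summability, and only then invokes cocoercivity \emph{qualitatively}, through the implied paramonotonicity/cut property, to conclude that a cluster point $\tilde{\bs\omega}$ with $\langle\bar{\mc C}(\tilde{\bs\omega}),\tilde{\bs\omega}-\bs\omega^*\rangle=0$ is a zero of $\bar{\mc C}+\bar{\mc D}$. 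You instead use cocoercivity \emph{quantitatively} in the descent (producing $-c\norm{\bar{\mc C}(\bs\omega^k)-\bar{\mc C}(\bs\omega^*)}^2$) and identify the limit via graph-closedness of $\bar{\mc D}$ plus continuity of $\bar{\mc C}$, with an Opial step for whole-sequence convergence. Your version buys more (residual-type information and full-sequence convergence, which the paper does not actually establish), but it pays for it at the point you yourself flag: in a one-shot firm-nonexpansiveness estimate, the splitting $\bs\omega^{k+1}-\bs\omega^*=(\bs\omega^{k+1}-\bs\omega^k)+(\bs\omega^k-\bs\omega^*)$ plus Young leaves an excess term proportional to $\norm{\bs\omega^{k+1}-\bs\omega^k}^2$, hence to $\norm{\bs\omega^k-\bar{\bs\omega}^k}^2=\delta^2\norm{\bs\omega^k-\bar{\bs\omega}^{k-1}}^2$, and there is no negative term of that kind in the same iteration; you must carry it across iterations in the Lyapunov function (using the $-\norm{\bs\omega^k-\bar{\bs\omega}^{k-1}}_\Psi^2$ produced at step $k-1$) and verify that the constants close under Assumption \ref{ass_step2}. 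This bookkeeping is precisely what the paper's reused two-inequality chain does for free, so your proof is not complete until that absorption is carried out explicitly.

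Two smaller corrections. First, your claim that the golden-ratio bound $1/\varphi\le\delta$ is ``precisely what makes the $(1,\rho)$ recursion contractive'' is off: the convexity recursion for $\bar{\bs\omega}^k$ works for any $\delta\in(0,1)$ with $\rho=\delta/(1-\delta)$; the bounds on $\delta$ and $\norm{\Psi^{-1}}$ are needed to absorb the increment terms discussed above, not for that recursion. Second, your treatment of the noise term (splitting off $\bs\omega^{k+1}-\bs\omega^k$ since $\bs\omega^{k+1}$ is not $\mc F_k$-measurable, then using Assumption \ref{ass_batch} for summability) is correct and matches what the paper does implicitly via Proposition \ref{prop_variance}.
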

\begin{proof}
See Appendix \ref{app_theo_coco}.
\end{proof}

\section{Stochastic Nash equilibrium problems}\label{sec_snep}

In this section we consider a non-generalized SNEP, namely, a SGNEP without shared constraints; see \cite{lei2020sync,lei2020} for recently proposed algorithms.
We consider that the local cost function of agent $i$ is defined as in \eqref{eq_cost_stoc} with $f_i(x_i)=\iota_{\Omega_i}(x_i)$. Assumptions \ref{ass_constr}--\ref{ass_sol} hold also in this case. 

The aim of each agent $i$, given the decision variables of the other agents $\bs{x}_{-i}$, is to choose a strategy $x_i$, that solves its local optimization problem, i.e.,
\begin{equation}\label{eq_game_SNE}
\forall i \in \mc I: \quad \min\limits _{x_i \in \Omega_i}  \JJ_i\left(x_i, \bs{x}_{-i}\right).
\end{equation}
As a solution, we aim to compute a stochastic Nash equilibrium (SNE), that is, a collective strategy $\bs x^*\in\bs{\Omega}$ such that for all $i \in \mc I$, 
$$\JJ_i(x_i^{*}, \boldsymbol x_{-i}^{*}) \leq \inf \{\JJ_i(y, \boldsymbol x_{-i}^{*})\; | \; y \in \Omega_i)\}.$$
We note that, compared to Definition \ref{def_GNE}, here we consider only local constraints.
Also in this case, we study the associated stochastic variational inequality (SVI) given by 
\begin{equation}\label{eq_svi_sne}
\langle \FF(\bs x^*),\bs x-\bs x^*\rangle\geq 0\text { for all } \bs x \in \bs{\Omega}
\end{equation}
where $\FF$ is the pseudogradient mapping as in \eqref{eq_grad}. 

The stochastic variational equilibria (v-SNE) of the game in (\ref{eq_game_SNE}) are defined as the solutions of the $\op{SVI}(\bs\Omega , \FF)$ in (\ref{eq_svi_sne}). 

\begin{remark}
A collective strategy $\bs x^*\in\bs{\mc X}$ is a Nash equilibrium of the game in \eqref{eq_game_SNE} if and only if $\bs x^*$ is a solution of the SVI in \eqref{eq_svi_sne} \cite[Proposition 1.4.2]{facchinei2007}, \cite[Lemma 3.3]{ravat2011}.
\fineass\end{remark}
The SRFB iterations for SNEPs are shown in Algorithm \ref{algo_i_sne}. 

\begin{algorithm}[t]
\caption{Stochastic Relaxed Forward Backward}\label{algo_i_sne}
Initialization: $x_i^0 \in \Omega_i$\\
Iteration $k$: Agent $i$ receives $x_j^k$ for all $j \in \mathcal{N}_{i}^{J}$, then updates:
$$\begin{aligned}
\bar{x}_i^{k} &=(1-\delta) x_i^k+\delta\bar{x}_i^{k-1} \\ 
x_i^{k+1}&=\op{proj}_{\Omega_i}[\bar x_i^k-\alpha_{i}\hat F_{i}(x_i^k, \boldsymbol{x}_{-i}^k,\xi_i^k)]
\end{aligned}$$
\end{algorithm}

\subsection{Convergence analysis}
In the SNEP case, we can consider sampling only one realization of the random variable, i.e, $S_k=1$:
\begin{equation}\label{eq_F_SA}
\begin{aligned}
\hat F(\bs x^k,\bs \xi^k)&=\op{col}(\hat F_i(\bs x^k,\bs \xi^k))\\
&=\op{col}(\nabla_{x_1}J_1(\bs x^k,\xi^k_1),\dots,\nabla_{x_N}J_N(\bs x^k,\xi^k_N)),
\end{aligned}
\end{equation}
where $\bs\xi^{k} =\op{col}(\xi_1^{k},\dots,\xi_{N}^k)\in\RR^N$ is a collection of i.i.d. random variables drawn from $\PP$.
Taking fewer samples is less computationally expensive but we have to make some further assumptions on the pseudogradient mapping.

\begin{assumption}[Cut property]\label{ass_cut}
$\FF$ in \eqref{eq_grad} is such that:
\begin{equation}\label{cut}
\left.\begin{array}{r}
\bs x^{*} \in \mathrm{SOL}(\FF, \bs\Omega) \\ 
\bar {\bs x} \in C \\ 
\bar {\bs x} \neq \bs x^{*} \\ 
\left\langle \FF(\bar {\bs x}), \bar {\bs x}-\bs x^{*} \right\rangle= 0
\end{array}\right\} 
\Longrightarrow \bar{\bs x} \in \mathrm{SOL}(\FF,\bs\Omega)\vspace{-.5cm}
\end{equation}
\fineass\end{assumption}
\begin{remark}\label{remark_cut}
The cut property means that, given a solution $\bs x^*$, it can be verified if another point $\bar{\bs x}$ is also a solution by looking only at $\bar{\bs x}$ and $\bs x^*$ instead of comparing $\bar{\bs x}$ with all the points in $\bs{\mc X}$. A very intuitive example is the search for a minimum of a single-valued function \cite{iusem1998}. 

The class of mappings that satisfy this assumption is that of paramonotone (or monotone$^+$) operators. A paramonotone operator is a monotone operator such that for all $\bs x,\bs y\in\bs{\mc X}$
$$\langle \FF(\bs x)-\FF(\bs y),\bs x-\bs y\rangle=0 \Rightarrow \FF(\bs x)=\FF(\bs y).$$
This property does not hold in general for monotone operators. It holds for strongly and strictly monotone operators, because in this cases there is only one solution \cite[Theorem 2.3.3]{facchinei2007}, and for cocoercive operators. In fact, strict monotonicity implies paramonotonicity that in turn implies monotonicity \cite[Definition 2.1]{iusem1998}. The same holds for cocoercive operators that are also paramonotone and consequently monotone \cite[Definition 2.3.9]{facchinei2007}.
We refer to \cite{iusem1998, crouzeix2000} for a deeper insight on this class of operators.
\fineass\end{remark}

\begin{assumption}[Bounded pseudogradient]\label{ass_bounded}
$\FF$ is bounded, i.e., there exists $B>0$ such that for all $\bs x\in \bs{\mc X}$ $\normsq{\FF(\bs x)}\leq B_\FF.$
\end{assumption}
Even if this assumption is quite strong, it is reasonable in our game theoretic framework. On the other hand, we do not require $\FF$ to be Lipschitz continuous, which is practical since computing the Lipschitz constant is difficult in general. 

With a little abuse of notation, we denote the approximation error again with $\epsilon_k=\hat F(\bs x^k,\xi_k)-\FF(\bs x^k).$

Concerning the assumptions on the stochastic error, we still suppose that it has zero expected value (Assumption \ref{ass_error}) but we do not need an explicit bound on the variance. 
\begin{assumption}[Parameter and step sizes]\label{ass_step}
The averaging parameter is such that $\delta\in(0,1)$.
The step size is square summable and such that
$$\sum_{k=0}^\infty\gamma_k=\infty, \;\sum_{k=0}^\infty\gamma_k^2<\infty \text{ and }\sum_{k=0}^\infty\gamma_k^2\,\EEk{\normsq{\epsilon_k}}<\infty.\vspace{-.4cm}$$
\fineass\end{assumption}
It follows from Assumption \ref{ass_step} that we can take a larger bound on the averaging parameter $\delta$. Since $\delta$ is no longer related to the golden ratio, the algorithm reduces to a relaxed FB iteration. Moreover, we note that in this case we must take a vanishing step size sequence to control the stochastic error.

We now state the main convergence result of this section.

\begin{theorem}\label{theo_sne}
Let Assumptions \ref{ass_constr}--\ref{ass_Jraph}, \ref{ass_error}, \ref{ass_weaker}, \ref{ass_cut}--\ref{ass_step} hold. Then, the sequence $(\bs x_k)_{k\in\NN}$ generated by Algorithm \ref{algo_i_sne} with $\hat F$ as in \eqref{eq_F_SA} converges to a solution of the game in \eqref{eq_game_SNE}.
\end{theorem}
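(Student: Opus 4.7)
The plan is to derive a stochastic quasi-Fej\'er inequality for a Lyapunov function that couples $\bs x^k$ with $\bar{\bs x}^{k-1}$, apply the Robbins--Siegmund supermartingale convergence theorem, and then identify the limit via the cut property together with an Opial-type argument. I would start from nonexpansiveness of $\op{proj}_{\bs\Omega}$: since any $\bs x^*\in\op{SOL}(\FF,\bs\Omega)$ is a fixed point of $\op{proj}_{\bs\Omega}\circ(\op{Id}-\gamma_k\FF)$, we have $\|\bs x^{k+1}-\bs x^*\|^2 \leq \|\bar{\bs x}^k-\gamma_k\hat F(\bs x^k,\bs\xi^k)-\bs x^*\|^2$. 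Next I would expand using the exact convex-combination identity
$$\|\bar{\bs x}^k-\bs x^*\|^2 = (1-\delta)\|\bs x^k-\bs x^*\|^2 + \delta\|\bar{\bs x}^{k-1}-\bs x^*\|^2 - \delta(1-\delta)\|\bs x^k-\bar{\bs x}^{k-1}\|^2,$$
and split $\hat F=\FF(\bs x^k)+\epsilon_k$ while writing $\bar{\bs x}^k-\bs x^* = (\bs x^k-\bs x^*)-\delta(\bs x^k-\bar{\bs x}^{k-1})$. The leading term $-2\gamma_k\langle\FF(\bs x^k),\bs x^k-\bs x^*\rangle$ is nonpositive by Assumption~\ref{ass_weaker}, while the auxiliary term $2\gamma_k\delta\langle\FF(\bs x^k),\bs x^k-\bar{\bs x}^{k-1}\rangle$ is bounded by Young's inequality together with boundedness of $\FF$ (Assumption~\ref{ass_bounded}): half of the resulting multiple of $\|\bs x^k-\bar{\bs x}^{k-1}\|^2$ is absorbed into the negative curvature term from the identity and the rest becomes $O(\gamma_k^2)$. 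The residual $\gamma_k^2\|\hat F\|^2$ is similarly bounded by $2\gamma_k^2 B_\FF + 2\gamma_k^2\|\epsilon_k\|^2$.

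Taking conditional expectation $\EE[\,\cdot\,|\mc F_k]$ annihilates $\langle\epsilon_k,\bar{\bs x}^k-\bs x^*\rangle$ by Assumption~\ref{ass_error}, because $\bar{\bs x}^k$ is $\mc F_k$-measurable. The crucial step is to introduce the Lyapunov function $V_k := \|\bs x^k-\bs x^*\|^2 + c\|\bar{\bs x}^{k-1}-\bs x^*\|^2$. A direct computation shows that the choice $c=\delta/(1-\delta)$ makes the coefficients of $\|\bs x^k-\bs x^*\|^2$ and $\|\bar{\bs x}^{k-1}-\bs x^*\|^2$ collapse back onto those of $V_k$, yielding
$$\EEk{V_{k+1}} \leq V_k - \eta\|\bs x^k-\bar{\bs x}^{k-1}\|^2 - 2\gamma_k\langle\FF(\bs x^k),\bs x^k-\bs x^*\rangle + C\gamma_k^2 + 2\gamma_k^2\EEk{\|\epsilon_k\|^2}$$
for some $\eta,C>0$. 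Since Assumption~\ref{ass_step} makes the last two terms a.s.\ summable while the negative terms are nonpositive, Robbins--Siegmund yields, almost surely, that $V_k$ converges, $\sum_k \gamma_k\langle\FF(\bs x^k),\bs x^k-\bs x^*\rangle<\infty$, and $\|\bs x^k-\bar{\bs x}^{k-1}\|\to 0$.

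Finally, $\sum_k\gamma_k=\infty$ forces $\liminf_k\langle\FF(\bs x^k),\bs x^k-\bs x^*\rangle=0$, and boundedness of $(V_k)$ lets me extract a subsequence $\bs x^{k_j}\to\bar{\bs x}\in\bs\Omega$ along which $\langle\FF(\bs x^{k_j}),\bs x^{k_j}-\bs x^*\rangle\to 0$. Continuity of $\FF$ (from Assumption~\ref{ass_J_exp} together with dominated convergence) gives $\langle\FF(\bar{\bs x}),\bar{\bs x}-\bs x^*\rangle=0$, so the cut property of Assumption~\ref{ass_cut} upgrades this to $\bar{\bs x}\in\op{SOL}(\FF,\bs\Omega)$. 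Repeating the derivation with $\bs x^*$ replaced by $\bar{\bs x}$ shows that the shifted Lyapunov function also converges a.s.; since it has a subsequence tending to zero it must vanish, giving $\bs x^k\to\bar{\bs x}$ a.s. The main obstacle is the mismatched cross term $\langle\FF(\bs x^k),\bs x^k-\bar{\bs x}^{k-1}\rangle$ between the pseudogradient evaluation point and the projection anchor: without Lipschitz continuity of $\FF$, one must rely on boundedness to match its magnitude against the quadratic negative curvature $-\delta(1-\delta)\|\bs x^k-\bar{\bs x}^{k-1}\|^2$ coming from the convex-combination identity, and the algebra that selects $c=\delta/(1-\delta)$ is what makes the Lyapunov telescoping clean.
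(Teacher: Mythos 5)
Your proposal is correct and follows essentially the same route as the paper's proof: a quasi-Fej\'er inequality obtained from (quasi)nonexpansiveness of the projection and the averaging identity, the sign condition from Assumption~\ref{ass_weaker}, Young's inequality with the bound $B_\FF$ on the $O(\gamma_k^2)$ terms, the zero-mean error and step-size conditions feeding Robbins--Siegmund, and finally the cut property to identify the limit. Your Lyapunov function $\|\bs x^k-\bs x^*\|^2+\tfrac{\delta}{1-\delta}\|\bar{\bs x}^{k-1}-\bs x^*\|^2$ is, via the convex-combination identity \eqref{norm_conv}, the same quantity the paper tracks as $\tfrac{1}{1-\delta}\|\bar{\bs x}^{k}-\bs x^*\|^2$ plus a lag term (cf.\ \eqref{sdet4}), and your explicit use of $\sum_k\gamma_k=\infty$ and the shifted-Lyapunov step to get convergence of the whole sequence only makes the concluding argument more careful than the paper's.
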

\begin{proof} 
See Appendix \ref{app_theo_sne}.
\end{proof}

\begin{remark}
We note that Theorem \ref{theo_sne} holds also in the deterministic case, under the same assumptions with the exception of those on the stochastic error (that is not present). 

Formally, under Assumptions \ref{ass_constr}--\ref{ass_Jraph}, \ref{ass_cut}--\ref{ass_step}, Algorithm \ref{algo_i_sne} converges to a v-NE of the game in \eqref{eq_game_SNE}.
Equivalently, one can use \cite[Algorithm 1]{malitsky2019} to find a deterministic NE.
\fineass\end{remark}

\subsection{Discussion on further monotonicity assumptions}\label{sec_cor}

In this section we discuss some consequences of Theorems \ref{theo_sgne} and \ref{theo_sne}. In particular, we discuss different monotonicity notions that can be used to find a SNE in relation with the two possible approximation schemes.

First of all, Algorithm \ref{algo_i} with the approximation as in \eqref{eq_F_SAA} can be used also for SNEPs.

\begin{corollary}\label{cor_vr_snep}
If Assumptions \ref{ass_constr}--\ref{ass_sol}, \ref{ass_mono}--\ref{ass_step1} hold. Then, the sequence $(\bs x^k)_{k\in\NN}$ generated by Algorithm \ref{algo_i} with $\hat F$ as in \eqref{eq_F_SAA} converges to a v-SNE of the game in \eqref{eq_game_SNE}.
\end{corollary}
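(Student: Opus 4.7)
The plan is to deduce Corollary \ref{cor_vr_snep} from Theorem \ref{theo_sgne} by embedding the SNEP as a degenerate SGNEP in which the coupling constraint is trivially absent. Concretely, I would regard the SNEP in \eqref{eq_game_SNE} as the SGNEP \eqref{eq_game} with coupling map $g\equiv 0$ (so that $\bs{\mc X}=\bs\Omega$ and $\mc X_i(\bs x_{-i})=\Omega_i$), and verify that every hypothesis of Theorem \ref{theo_sgne} is met: Assumptions \ref{ass_constr}--\ref{ass_sol} are assumed directly; Assumption \ref{ass_Jraph} on the multiplier graph is inherited from the SGNEP setup and plays no substantive role once $g\equiv 0$; Assumptions \ref{ass_batch}--\ref{ass_variance} on the sample-average approximation \eqref{eq_F_SAA} are unchanged; and Assumptions \ref{ass_mono}--\ref{ass_step1} are exactly the hypotheses assumed here. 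Slater's condition is trivial because $\bs{\mc X}=\bs\Omega$, and $\nabla g\equiv 0$ is vacuously bounded and Lipschitz.

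With $g\equiv 0$, the extended operators in \eqref{eq_expanded} collapse: the terms $\nabla g(\bs x)^\top\bs\lambda$ and $-g(\bs x)$ vanish, so $\bar{\mc A}$ reduces to the block-diagonal map $\op{col}(\FF(\bs x),{\bf L}\bs z,{\bf L}\bs\lambda)$, while $\bar{\mc B}$ is unchanged. The KKT system \eqref{eq_KKT_VI} in this case is satisfied by $\bs\lambda^*={\bf 0}$ together with any $\bs x^*$ solving $\op{SVI}(\bs\Omega,\FF)$, i.e.\ a v-SNE of \eqref{eq_game_SNE}. Hence $\op{zer}(\bar{\mc A}+\bar{\mc B})$ consists exactly of triples $(\bs x^*,\bs z^*,{\bf 0})$ with $\bs x^*$ a v-SNE and $\bs z^*\in\op{ker}{\bf L}$; in particular, the $\bs x$-component of any such zero is a v-SNE.

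Next I would observe that, when Algorithm \ref{algo_i} is initialized with $\lambda_i^0=0$ and $z_i^0=0$ for all $i\in\mc I$, the updates in \eqref{eq_prox} preserve $\lambda_i^k=0$ and $z_i^k=0$ for all $k$, because $g_i\equiv 0$ and $\nabla g_i\equiv 0$ kill every contribution to the dual and auxiliary updates (and the projection onto $\RR^m_{\geq 0}$ of $0$ is $0$). Consequently, Algorithm \ref{algo_i} reduces to the purely primal recursion
\begin{equation*}
\bar x_i^k=(1-\delta)x_i^k+\delta\bar x_i^{k-1},\qquad x_i^{k+1}=\op{prox}_{f_i}\!\left[\bar x_i^k-\alpha_i\hat F_i(\bs x^k,\xi_i^k)\right],
\end{equation*}
which is exactly the SRFB iteration applied to $\op{SVI}(\bs\Omega,\FF)$ with variance-reduced approximation \eqref{eq_F_SAA}.

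Having set up this embedding, I would invoke Theorem \ref{theo_sgne}: it yields almost sure convergence of $(\bs\omega^k)_{k\in\NN}=(\bs x^k,{\bf 0},{\bf 0})$ to some zero of $\bar{\mc A}+\bar{\mc B}$, whose $\bs x$-component is a v-SGNE of the embedded game, hence a v-SNE of \eqref{eq_game_SNE} by Lemma \ref{lemma_zero}(i) applied with $g\equiv 0$. The only point that requires mild care is the zero-initialization argument for $(\lambda_i^k,z_i^k)$: the conclusion of Theorem \ref{theo_sgne} is stated for general initial conditions, so strictly speaking one can either (a) run Algorithm \ref{algo_i} with arbitrary zero initializations of the dual/auxiliary variables as above, or (b) note that the statement of the corollary concerns only the primal iterates $(\bs x^k)$, whose limit coincides with the $\bs x$-component of the limit triple regardless of how the nuisance coordinates are initialized. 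Either route yields the claim, and no new technical ingredient is needed beyond Theorem \ref{theo_sgne} itself; the single genuine obstacle is just the bookkeeping verification that $g\equiv 0$ is an admissible instance for every assumption invoked in the SGNEP proof.
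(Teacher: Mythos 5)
Your proposal is correct and follows essentially the same route as the paper, whose proof is simply to set $g\equiv 0$ and apply Theorem \ref{theo_sgne}; your additional verifications (trivial Slater condition, collapse of $\bar{\mc A}$, zero dual/auxiliary variables) are just the bookkeeping the paper leaves implicit.
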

\begin{proof}
Set $g\equiv0$ and apply Theorem \ref{theo_sgne}.
\end{proof}
\begin{remark}
In Corollary \ref{cor_vr_snep} as well, the condition presented in Remark \ref{remark_weak} can be used instead of monotonicity.
\fineass\end{remark}

The same result holds in the case of cocoercive mappings but in this case Assumption \ref{ass_coco} can be reduced to the cut property for the pseudogradient (Assumption \ref{ass_cut}).
\begin{corollary}
Let Assumptions \ref{ass_constr}--\ref{ass_sol}, \ref{ass_batch}--\ref{ass_delta}, \ref{ass_coco}, \ref{ass_step2} hold. Then, the sequence $(\bs x_k)_{k\in\NN}$ generated by Algorithm \ref{algo_i2} with $\hat F$ as in \eqref{eq_F_SAA} converges a.s. to a v-SNE for the game in \eqref{eq_game_SNE}.
\end{corollary}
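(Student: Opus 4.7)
The plan is to obtain the corollary by specialization of Theorem~\ref{theo_coco} to the case without coupling constraints, mirroring the one-line argument used for Corollary~\ref{cor_vr_snep}. First, I would embed the SNEP~\eqref{eq_game_SNE} into the SGNEP framework~\eqref{eq_game} by declaring trivial affine coupling constraints $g(\bs x) = A\bs x - b$ with $A = 0_{m\times n}$ and $b = 0_m$; this is an admissible instance of the affine-coupling assumption preceding Theorem~\ref{theo_coco}. With this reduction, the collective feasible set~\eqref{collective_set} becomes $\bs{\mc X} = \bs\Omega$, so Assumptions~\ref{ass_constr}--\ref{ass_sol} (inherited from the hypothesis) and Assumptions~\ref{ass_Jraph}, \ref{ass_batch}--\ref{ass_delta}, \ref{ass_coco}, \ref{ass_step2} needed for Theorem~\ref{theo_coco} are all met; the step-size bounds in Assumption~\ref{ass_step2} simplify since the terms $\max_j \sum_k |[A_i^\top]_{jk}|$ and $\max_j \sum_k |[A_i]_{jk}|$ vanish.

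Next, I would verify that Algorithm~\ref{algo_i2} applied to this instance, with the natural initialization $z_i^0 = z_i^{-1} = 0$ and $\lambda_i^0 = \lambda_i^{-1} = 0$, generates iterates with $\bs z^k \equiv 0$ and $\bs\lambda^k \equiv 0$ for every $k\in\NN$. Indeed, $A_i = 0$ eliminates the coupling between the primal and dual blocks in~\eqref{eq_expanded2} and~\eqref{eq_phi2}, so the $z$-update reduces to $z_i^{k+1} = \bar z_i^k - \nu_i\sum_j w_{i,j}(\lambda_i^k - \lambda_j^k)$ and the $\lambda$-update to a projection of a linear combination of $\bs z$ and $\bs\lambda$; an easy induction shows both remain at zero. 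Consequently the primal update collapses to
\[
x_i^{k+1} = \op{proj}_{\Omega_i}\!\bigl[\bar x_i^k - \alpha_i\,\hat F_i(x_i^k,\bs x_{-i}^k,\xi_i^k)\bigr],
\]
which is precisely the iteration of Algorithm~\ref{algo_i_sne} produced by Algorithm~\ref{algo_i2} in the trivial-coupling regime.

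Finally, I would invoke Theorem~\ref{theo_coco}: it yields that $(\bs x^k)_{k\in\NN}$ converges almost surely to a v-SGNE of the embedded game. In the absence of coupling constraints the KKT inclusions~\eqref{eq_KKT_VI} collapse (with $\bs\lambda^\ast = 0$) to the variational inequality~\eqref{eq_svi_sne}, so v-SGNE and v-SNE coincide, proving the claim. The only non-routine checks are bookkeeping: that the reduction preserves the cocoercivity of the extended operator $\bar{\mc C}$ (immediate from Lemma~\ref{lemma_op2}(i), as the Laplacian block is unaffected) and that Assumption~\ref{ass_step2} is compatible with $A = 0$ (it is, as the bounds on $\alpha_i$ and $\sigma_i$ merely lose their $A_i$-dependent terms); neither of these requires additional work beyond what Theorem~\ref{theo_coco} already provides.
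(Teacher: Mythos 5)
Your proposal is correct and takes essentially the same route as the paper, whose entire proof is to set $A=0$ and $b=0$ and apply Theorem~\ref{theo_coco}. Your extra bookkeeping (checking that Assumption~\ref{ass_step2} simplifies, that the auxiliary and dual iterates remain at zero, and that cocoercivity of $\bar{\mc C}$ is preserved) just makes explicit why that one-line reduction is legitimate.
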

\begin{proof}
Set $A=0$ and $b=0$ and apply Theorem \ref{theo_coco}.
\end{proof}

Besides the cut property, there are other assumptions that can be considered.

\begin{assumption}[Weak sharpness]\label{ass_sharp}
$\FF$ satisfies the weak sharpness property, i.e. for all $\bs x\in\bs{\mc X}$, $\bs x^*\in SOL(\FF,\bs\Omega)$ and for some $c>0$
$$\langle \FF(\bs x^*),\bs x-\bs x^*\rangle\geq c\min_{\bs x^*\in SOL(\FF,\Omega)}\norm{\bs x-\bs x^*}$$
\end{assumption}

\begin{remark}
Assumption \ref{ass_sharp} is stronger than that in Remark \ref{remark_weak} and it is often used in addition to monotonicity \cite{kannan2019,cui2019}. It is sometimes considered a property of the solution set and it is implied by paramonotonicity \cite[Theorem 4.1]{marcotte1998}.
\fineass\end{remark}

\begin{corollary}\label{cor_weak}
Let Assumptions \ref{ass_constr}--\ref{ass_sol}, \ref{ass_error}, \ref{ass_step}, \ref{ass_sharp} hold. Then, the sequence $(\bs x_k)_{k\in\NN}$ generated by Algorithm \ref{algo_i_sne} with $\hat F$ as in \eqref{eq_F_SA} converges to a v-SNE of the game in \eqref{eq_game_SNE}.
\end{corollary}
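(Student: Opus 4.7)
The plan is to mirror the proof of Theorem \ref{theo_sne}: the algorithm (Algorithm \ref{algo_i_sne}) and the single-sample approximation \eqref{eq_F_SA} are unchanged, so only the mechanism used to identify the limit of the iterates differs---weak sharpness (Assumption \ref{ass_sharp}) replaces the cut property (Assumption \ref{ass_cut}). Reusing the stochastic Fej\'er-type analysis, I would start from the projection characterization of $\bs x^{k+1}$, expand $\|\bs x^{k+1}-\bs x^*\|^2$ for $\bs x^*\in\op{SOL}(\bs\Omega,\FF)$, unpack the averaging rule $\bar{\bs x}^k=(1-\delta)\bs x^k+\delta\bar{\bs x}^{k-1}$, and take conditional expectation, invoking $\EE[\epsilon_k\mid\mc F_k]=0$ from Assumption \ref{ass_error}. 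This produces, for a Lyapunov function $V_k$ combining $\|\bs x^k-\bs x^*\|^2$ with an auxiliary term in $\bar{\bs x}^{k-1}$, a recursion of the form
$$\EE[V_{k+1}\mid\mc F_k] \leq V_k - 2\gamma_k\,\langle\FF(\bs x^k),\bs x^k-\bs x^*\rangle + \gamma_k^2\bigl(M+\EE[\|\epsilon_k\|^2\mid\mc F_k]\bigr),$$
where $M$ bounds $\|\FF\|$ on the feasible set (a consequence of compactness under Assumption \ref{ass_constr}, or of a pseudogradient boundedness hypothesis inherited from the SNEP framework).

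Next, weak sharpness delivers the descent term. Since Assumption \ref{ass_sharp} is stronger than the property in Remark \ref{remark_weak}, we have $\langle\FF(\bs x^k),\bs x^k-\bs x^*\rangle\geq 0$, and weak sharpness gives $\langle\FF(\bs x^*),\bs x^k-\bs x^*\rangle\geq c\,\op{dist}(\bs x^k,\op{SOL}(\bs\Omega,\FF))$. Using the (pseudo)monotone behaviour that Assumption \ref{ass_sharp} bundles with weak sharpness to pass from $\FF(\bs x^k)$ to $\FF(\bs x^*)$ on the left-hand side, the recursion becomes
$$\EE[V_{k+1}\mid\mc F_k] \leq V_k - 2c\gamma_k\,\op{dist}(\bs x^k,\op{SOL}(\bs\Omega,\FF)) + \gamma_k^2\bigl(M+\EE[\|\epsilon_k\|^2\mid\mc F_k]\bigr).$$
Assumption \ref{ass_step} makes the residual noise summable, so the Robbins--Siegmund lemma yields a.s.\ convergence of $V_k$ together with $\sum_k\gamma_k\,\op{dist}(\bs x^k,\op{SOL}(\bs\Omega,\FF))<\infty$ a.s. Combining this with $\sum_k\gamma_k=\infty$ forces $\liminf_k\op{dist}(\bs x^k,\op{SOL}(\bs\Omega,\FF))=0$ a.s.

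To close the argument, boundedness of $(\bs x^k)$ (inherited from convergence of $V_k$) furnishes a subsequence $\bs x^{k_j}\to\bs x^\dagger\in\op{SOL}(\bs\Omega,\FF)$; reapplying Robbins--Siegmund with $\bs x^\dagger$ in place of $\bs x^*$ gives a.s.\ convergence of $\|\bs x^k-\bs x^\dagger\|$, and the vanishing subsequence forces $\bs x^k\to\bs x^\dagger$ a.s. The main obstacle is the transition from $\FF(\bs x^k)$ (which the projection step naturally produces in the Fej\'er-type recursion) to $\FF(\bs x^*)$ (needed to invoke the weak sharpness bound): keeping this substitution legal, while preserving the sign of the descent term and absorbing the stochastic error $\epsilon_k$ through conditional expectation, is the key technical step where this proof departs from Theorem \ref{theo_sne}, which instead pins down cluster points via the cut property on $\FF$.
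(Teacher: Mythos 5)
Your proposal is correct and follows essentially the same route as the paper: it reuses the recursion \eqref{sdet4} from the proof of Theorem \ref{theo_sne}, replaces the term $\langle \FF(\bs x^k),\bs x^*-\bs x^k\rangle$ by the weak-sharpness bound of Assumption \ref{ass_sharp} (with the same implicit use of monotonicity to pass from $\FF(\bs x^k)$ to $\FF(\bs x^*)$ that the paper makes silently), and concludes via the Robbins--Siegmund Lemma \ref{lemma_RS}. The only difference is your more careful finishing step ($\liminf$ of the distance plus a Fej\'er-type re-application at the cluster point), which the paper compresses into the claim that the distance to the solution set vanishes.
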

\begin{proof}
See Appendix \ref{app_sec_cor}.
\end{proof}
\begin{remark}\label{remark_acute}
As a technical assumption in addition to monotonicity, we can also consider the acute angle relation, i.e., $\langle \FF(\bs x),\bs x-\bs x^*\rangle>0$ for all $\bs x\in\bs{\mc X}$ and $\bs x^*\in SOL(\FF,\bs\Omega)$, $\bs x\neq\bs x^*$, also known as variational stability \cite{mertikopoulos2019}. It is implied by strict pseudomonotonicity which in turn is implied by strict monotonicity \cite[Definition 2]{kannan2019}, \cite[Corollary 2.4]{mertikopoulos2019}. It is stronger than the assumption in Remark \ref{remark_weak} since the condition is satisfied with the strict inequality.\\
Formally, if Assumptions \ref{ass_constr}--\ref{ass_sol}, \ref{ass_error}, \ref{ass_step} and the acute angle relation hold, then, the sequence $(\bs x_k)_{k\in\NN}$ generated by Algorithm \ref{algo_i_sne} with $\hat F$ as in \eqref{eq_F_SA} converges a.s. to a v-SNE of the game in \eqref{eq_game_SNE}.
We propose a proof in Appendix \ref{app_sec_cor}. The same result for $\delta=0$ is established in \cite[Theorem 4.7]{mertikopoulos2019}.
\end{remark}

%
%
%

\section{Numerical simulations}\label{sec_sim}
Let us now propose some numerical simulations to corroborate the theoretical analysis. We compare our algorithm with the stochastic distributed preconditioned forward--backward (SpFB) \cite{franci2019}, forward--backward--forward (SFBF) \cite{franci2019fbf,staudigl2019}, extragradient (SEG) \cite{iusem2017} and projected reflected gradient (SPRG) \cite{cui2019,cui2016} algorithms, using variance reduction.

We present two sets of simulations: a Cournot game and an academic example. While the first is a realistic application to an electricity market with market capacity constraints, the second is built to show the advantages of the SRFB algorithm.

All the simulations are performed on Matlab R2019a with a 2,3 GHz Intel Core i5 and 8 GB LPDDR3 RAM.
%
\subsection{Illustrative example}
We start with the built up example, that is, a monotone (non-cocoercive) stochastic Nash equilibrium problem with two players with strategies $x_1$ and $x_2$ respectively, and pseudogradient mapping $\FF(x_1,x_2)=(R_1(\xi)x_2,-R_2(\xi)x_1)^\top$.
The mapping is monotone and the random variables are sampled from a normal distribution with mean 1 and finite variance. The problem is unconstrained and the optimal solution is $(0,0)$. The step sizes are taken to be the highest possible. 
As one can see from Fig. \ref{plot_sol_complex}, the SpFB does not converge in this case because stronger monotonicity properties on the mapping should be taken. Moreover, we note that the SPRG is not guaranteed to converge under mere monotonicity. From Fig. \ref{plot_sec_complex} instead, we note that the SRFB algorithm is less computationally expensive than the EG.

\begin{figure}[t]
\centering
\includegraphics[width=.45\textwidth]{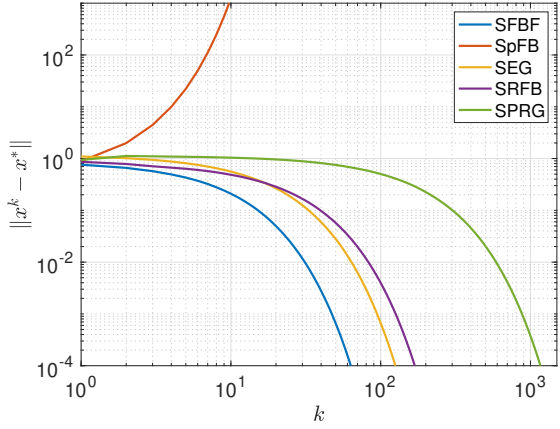}
\caption{Distance of the primal variable from the solution.}\label{plot_sol_complex}
\end{figure}

\begin{figure}[t]
\centering
\includegraphics[width=.45\textwidth]{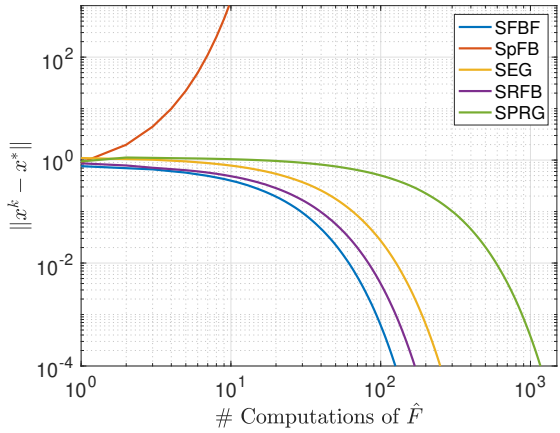}
\caption{Distance of the primal variable from the solution versus number of evaluations of $\hat F$ in \eqref{eq_F_SAA}}.\label{plot_sec_complex}
\end{figure}
\subsection{Case study: Network Cournot game}
We consider the network Cournot game proposed in \cite{malitsky2019} with the addition of markets capacity constraints \cite{yi2019,yu2017} which may model the electricity market, the gas market or the transportation system \cite{watling2006,henrion2007}.
Let us consider a set of $N$ companies that sell a commodity in a set of $m$ markets. Each company decides the quantity $x_i$ of product to be delivered in the $n_i$ markets it is connected with. Each company has a local cost function $c_i(x_i)$ related to the production of the commodity. We assume that the cost function is not uncertain as the companies should know their own cost of production. \\
Since the markets have a bounded capacity $b=[b_1,\dots,b_m]$, the collective constraints are given by $A\boldsymbol x\leq b$ where $A=[A_1,\dots,A_N]$. Each $A_i$ indicates in which markets each company participates. The prices are collected in a mapping $P:\RR^m\times \Xi\to\RR$ that denotes the inverse demand curve. 
The random variable $\xi\in\Xi$ represents the demand uncertainty. The cost function of each agent is therefore given by
\begin{equation}\label{eq_sim}
\textstyle{\JJ_i(x_i,x_{-i},\xi)=c_i(x_i)-\EE\left[P(\bs x,\xi)\sum_{i\in\mc I}x_i\right].}
\end{equation}


As a numerical setting, we consider a set of 20 companies and 7 markets, connected as in \cite[Fig. 1]{yi2019}. Following \cite{yi2019}, we suppose that the dual variables graph is a cycle graph with the addition of the edges $(2,15)$ and $(6,13)$. 
Each company $i$ has local constraints of the form $0 < x_i \leq \theta_i$ where each component of $\theta_i$ is randomly drawn from $[1, 1.5]$. The maximal capacity $b_j$ of a market $j$ is randomly drawn from $[0.5, 1]$. The local cost function of company $i$ is 
$$\textstyle{c_i(x_i) =q_i^\top x_i+\frac{\beta_i}{\beta_i+1} \pi_{i}^{\frac{1}{\beta_i}} \sum_{j=1}^{n_i} ([x_i]_j)^{\frac{\beta_i+1}{\beta_i}}}$$
where $[x_i]_j$ indicates the $j$ component of $x_i$.
$\pi_i$ is randomly drawn from $[0.5,5]$, and each component of $q_i$ is randomly drawn from $[1,100]$. 
Similarly to \cite{malitsky2019}, we assume that the inverse demand function is of the form
$$\textstyle{P(\bs x,\xi)=\Lambda(\xi)^{\frac{1}{\gamma}}\left(\sum_{i\in\mc I}x_i\right)^{-\frac{1}{\gamma}}}$$
where $\gamma=1.1$ and $\Lambda(\xi)$ is drawn following a normal distribution with mean 5000 and finite variance.
We note that the mapping in \eqref{eq_sim} is monotone but it may be not Lipschitz continuous depending on $\beta$ and $\gamma$.
\begin{figure}[t]
\centering
\includegraphics[width=.45\textwidth]{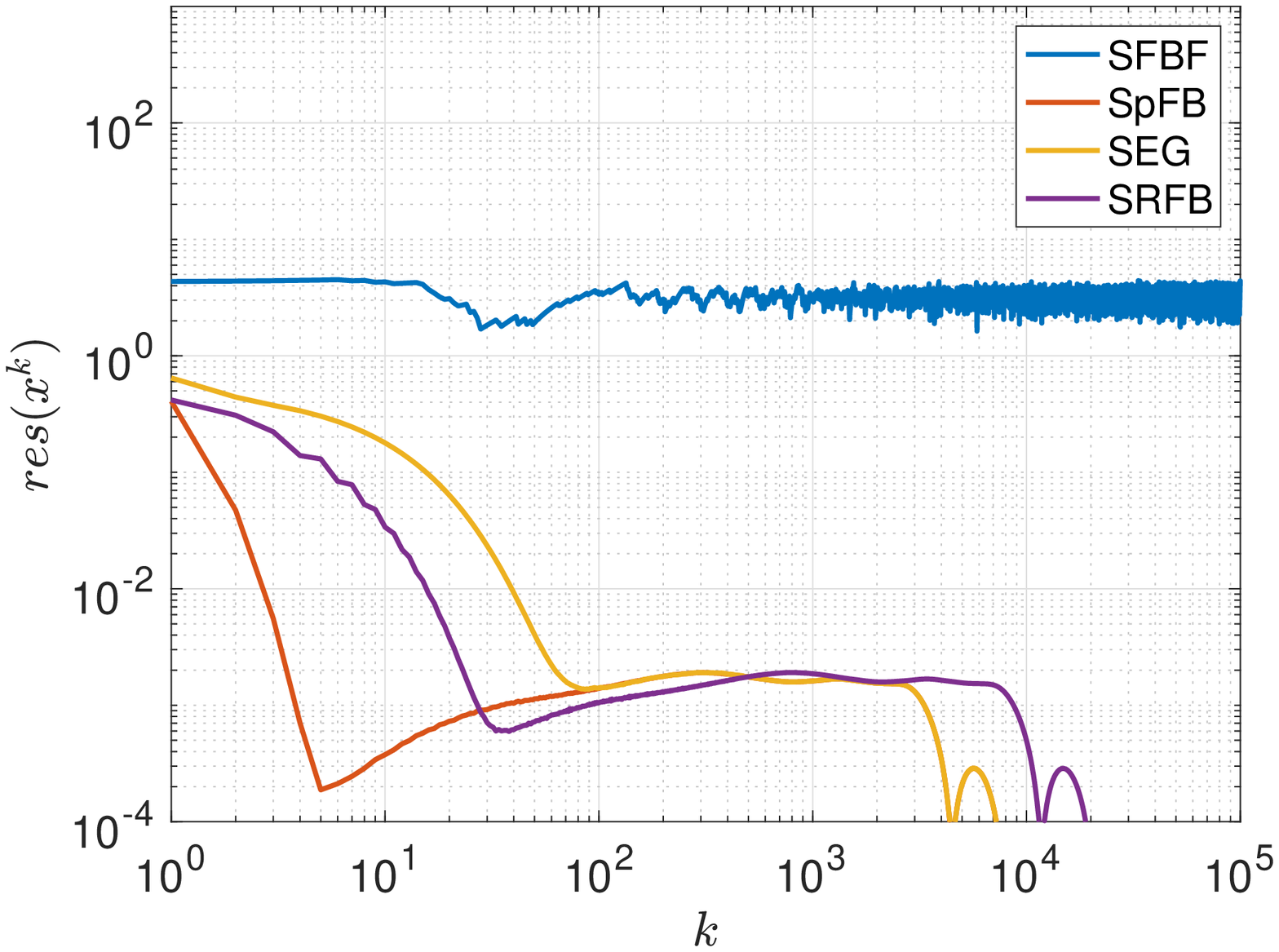}
\caption{Residual distance of the primal variable from the solution.}\label{plot_sol}
\end{figure}

\begin{figure}[t]
\centering
\includegraphics[width=.45\textwidth]{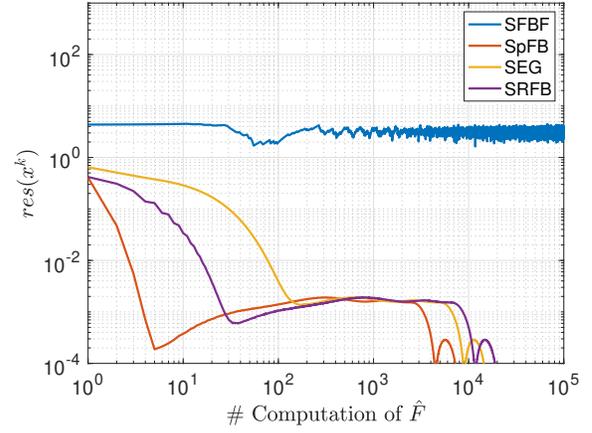}
\caption{Residual distance of the primal variable from the solution versus number of evaluations of $\hat F$ in \eqref{eq_F_SAA}.}\label{plot_time} 
\end{figure}

We simulate the SpFB, SFBF, SEG and SRFB to make a comparison using the SA scheme with variance reduction. Since the mapping is not Lipschitz continuous, we tune the step sizes to be half of the minimum step that causes instability.
The plots in Fig. \ref{plot_sol} and \ref{plot_time} show respectively the residual of $\bs x_k$ ($\op{res}(\bs x_k)$) that measure the distance from $\bs x_k$ being a solution, and the number of computations of the approximations $\hat F$ in \eqref{eq_F_SAA} of the pseudogradient needed to reach a solution. As one can see, our algorithm is slower than the SpFB as ours involves the averaging step but it is faster than the EG scheme. Remarkably, the fact that the mapping is only monotone and not Lipschitz continuous prevent the SFBF from converging but it does not affect the other algorithms.

\section{Conclusion}

The stochastic relaxed forward--backward algorithm is applicable to stochastic (generalized) Nash equilibrium seeking in merely monotone games. To approximate the expected valued pseudogradient, the stochastic approximation scheme (with or without variance reduction) can be used to guarantee almost sure convergence to an equilibrium.

Our stochastic relaxed forward--backward algorithm is the first distributed algorithm with single proximal computation and single approximated pseudogradient computation per iteration for merely monotone stochastic games.

It remains an open question whether for monotone SGNEPs the stochastic approximation with only one random sample per iteration can guarantee almost sure convergence to an equilibrium, instead of the variance reduced approach.
We also leave for future work a comprehensive comparison between the SRFB algorithm and the most popular fixed-step algorithms for SVIs and SGNEPs (especially, SEG and SFBF) in terms of computational complexity and convergence speed.

\section*{Acknowledgments}
The authors thank Alfiya Kulmukhanova for preliminary discussions on relaxed forward--backward algorithms.

\appendices
\section{Properties of the extended operators}\label{app_op}
\begin{proof}[Proof of Lemma \ref{lemma_zero}]
The proof of (i) can be obtained similarly to \cite[Theorem 2]{yi2019}. 
Concerning (ii), given Assumption \ref{ass_constr}--\ref{ass_sol}, the game in \eqref{eq_game} has at least one solution $\bs x^*$, therefore, there exists a $\bs \lambda^*\in\RR^m_{\geq0}$ such that the KKT conditions in \eqref{eq_KKT_VI} are satisfied \cite[Theorem 3.1]{auslender2000}. It follows that $\op{zer}(\mc A+\mc B)\neq\emptyset$. The existance of $\bs z^*$ such that $\op{col}(\bs x^*,\bs z^*,\bs\lambda^*)\in\op{zer}(\bar{\mc A}+\bar{\mc B})$ follows using some properties of the normal cone and of the Laplacian matrix as a consequence of Assumption \ref{ass_Jraph} \cite[Theorem 2]{yi2019}.
\end{proof}

\begin{proof}[Proof of Lemma \ref{lemma_op}]
$\bar{\mc A}=\mc A_1+\mc A_2$ is given by a sum, therefore it is monotone if both the addend are \cite[Proposition 20.10]{bau2011}. $\mc A_2$ is monotone because of \cite[Theorem 1]{rockafellar1970} and monotonicity of $\mc A_1$ follows from 
$$\begin{aligned}
\langle\mc A_1(\bs\omega_1)-\mc A_1(\bs\omega_2),\bs\omega_1-\bs\omega_2\rangle&=\langle\FF(\bs x_1)-\FF(\bs x_2),\bs x_1-\bs x_2\rangle\\
&+\langle \bf L\bs\lambda_1-\bf L\bs \lambda_2,\bs \lambda_1-\bs \lambda_2\rangle\geq 0,
\end{aligned}$$
since Assumption \ref{ass_mono} holds and $\bf L$ is cocoercive by the Baillon-Haddard Theorem and, therefore, monotone \cite[Example 20.5]{bau2011}. To show that $\bar{\mc A}$ is Lipschitz continuous, we use the fact that $\FF$ is $\ell_F$-Lipschitz and $\bf L$ is $\ell_L$-Lipschitz continuous:
$$\begin{aligned}
\norm{\bar{\mc A}_1(\bs\omega_1)-\bar{\mc A}_1(\bs\omega_2)}&\leq\norm{\FF(\bs x_1)-\FF(\bs x_2)}+\norm{\bf L\bs\lambda_1-\bf L\bs \lambda_2}\\
&\leq(\ell_F+\ell_L)(\norm{\bs x_1-\bs x_2}+\norm{\bs \lambda_1-\bs \lambda_2}).
\end{aligned}$$
Similarly we can prove that the skew symmetric part is $\ell_{\mc A_2}=\ell_L+\ell_g+B_{\nabla g}$-Lipschitz continuous (with constant that depends on $A$ and $L$) from which  it follows that $\bar{\mc A}$ is $\ell_{\bar{\mc A}}=\ell_{\mc A_1}+\ell_{\mc A_2}$-Lipschitz continuous.\\
$\bar{\mc B}$ is maximally monotone by \cite[Proposition 20.23]{bau2011} because $\partial f$ is maximally monotone by Assumption \ref{ass_J} and Moreau Theorem \cite[Theorem 20.25]{bau2011} and the Normal cone is maximally monotone \cite[Example 20.26]{bau2011}.\\
The fact that $\Phi^{-1}\bar{\mc A}$ is monotone follows from the fact that $\bar{\mc A}$ is monotone:
$\langle\Phi^{-1}(\bar{\mc A}(\bs\omega_1)-\Phi^{-1}\bar{\mc A}(\bs\omega_2),\bs\omega_1-\bs\omega_2\rangle_\Phi=\langle\bar{\mc A}(\bs\omega_1)-\bar{\mc A}(\bs\omega_2),\bs\omega_1-\bs\omega_2\rangle\geq0.$
Similarly it holds that $\Phi^{-1}\bar{\mc A}$ is Lipschitz continuous and that $\Phi^{-1}\bar{\mc B}$ is maximally monotone.
\end{proof}

\begin{proof}[Proof of Lemma \ref{lemma_op2}]
First we notice that $\norm{L}\geq 2d^*$ and that by the Baillon--Haddard theorem the Laplacian operator is $\tfrac{1}{2d^*}$-cocoercive. Then Statement 1) follow by this computation:
$$\begin{aligned}
\langle\bar{\mc C}&(\bs\omega_1)-\bar{\mc C}(\bs\omega_2),\bs\omega_1-\bs\omega_2\rangle\\
&=\langle \FF(\bs x_1)-\FF(\bs x_2),x_1-x_2\rangle+\langle {\bf{L}}\bs \lambda_1-{\bf{L}}\bs \lambda_2,\bs \lambda_1-\bs \lambda_2\rangle\\
&\geq\beta\normsq{\FF(\bs x_1)-\FF(\bs x_2)}+\tfrac{1}{2d^*}\normsq{{\bf{L}}\bs \lambda_1-{\bf{L}}\bs \lambda_2}\\
&\geq\min\left\{\beta,\tfrac{1}{2d^*}\right\}\left(\normsq{\FF(\bs x_1)-\FF(\bs x_2)}+\normsq{{\bf{L}}\bs \lambda_1-{\bf{L}}\bs \lambda_2}\right)\\
&\geq\theta\normsq{\bar{\mc C}(\bs\omega_1)-\bar{\mc C}(\bs\omega_2)}.
\end{aligned}$$
The operator $\bar{\mc D}$ is given by a sum, therefore it is maximally monotone if both the addend are \cite[Proposition 20.23]{bau2011}. The first part is maximally monotone because the normal cone is and the second part is a skew symmetric matrix \cite[Corollary 20.28]{bau2011}.
Statement 3) follows from Statement 1) and 4) follows from 2) \cite[Lemma 7]{yi2019}.
\end{proof}

\section{Useful lemmas}
We here recall some known facts about norms and sequence of random variables. Moreover, we include two preliminary results that are useful for the forthcoming convergence proofs.
\paragraph{Norm properties}
Now we recall some property of the norms that we will use in the proofs. 
We use the cosine rule (or Pythagorean identity)
\begin{equation}\label{cosine}
\langle x,y\rangle=\tfrac{1}{2}\left(\langle x,x\rangle+\langle y,y\rangle-\norm{x-y}^2\right)
\end{equation}
and the following two property of the norm \cite[Corollary 2.15]{bau2011}, $\forall a, b \in \mathcal{X}$, $\forall \alpha \in \mathbb{R}$
\begin{equation}\label{norm_conv}
\|\alpha a+(1-\alpha) b\|^{2}=\alpha\|a\|^{2}+(1-\alpha)\|b\|^{2}-\alpha(1-\alpha)\|a-b\|^{2},
\end{equation}
\begin{equation}\label{norm_sum}
\|a+b\|^{2}\leq2\|a\|^{2}+2\|b\|^{2}.
\end{equation}

\paragraph{Property of the projection and proximal operator}
By \cite[Proposition 12.26]{bau2011}, the projection operator and the proximity operators satisfy, respectively, the following inequalities. Let $C$ be a nonempty closed convex set and let $g$ be a proper lower semicontinuous function, then, for all $x,y\in C$
\begin{equation}\label{proj}
\bar x=\op{proj}_C(x)\Leftrightarrow\langle \bar x-x, y-\bar x\rangle \geq 0 .
\end{equation}
\begin{equation}\label{prox}
\bar x=\op{prox}_f(x)\Leftrightarrow\langle \bar x-x, y-\bar x\rangle \geq f(\bar x)-f(y) 
\end{equation}
Moreover, by \cite[Proposition 16.44]{bau2011}, it holds that
\begin{equation}\label{resolvent}
\op{prox}_f=(\op{Id}+\partial f)^{-1}.
\end{equation}
\paragraph{Sequence of random variables}
We now recall some results concerning sequences of random variables, given the probability space $(\Xi, \mc F, \PP)$.
The Robbins-Siegmund Lemma is widely used in literature to prove a.s. convergence of sequences of random variables. It first appeared in \cite{RS1971}. 
\begin{lemma}[Robbins-Siegmund Lemma, \cite{RS1971}]\label{lemma_RS}
Let $\mc F=(\mc F_k)_{k\in\NN}$ be a filtration. Let $\{\alpha_k\}_{k\in\NN}$, $\{\theta_k\}_{k\in\NN}$, $\{\eta_k\}_{k\in\NN}$ and $\{\chi_k\}_{k\in\NN}$ be nonnegative sequences such that $\sum_k\eta_k<\infty$, $\sum_k\chi_k<\infty$ and let
$$\forall k\in\NN, \quad \EE[\alpha_{k+1}|\mc F_k]+\theta_k\leq (1+\chi_k)\alpha_k+\eta_k \quad a.s.$$
Then $\sum_k \theta_k<\infty$ and $\{\alpha_k\}_{k\in\NN}$ converges a.s. to a nonnegative random variable.
\end{lemma}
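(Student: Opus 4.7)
The plan is to construct a nonnegative supermartingale whose convergence, via Doob's theorem, simultaneously delivers the two conclusions (summability of $\theta_k$ and a.s. convergence of $\alpha_k$). The multiplicative factor $1+\chi_k$ on the right-hand side is the only obstruction to reading the hypothesis directly as a supermartingale inequality, so the first job is to absorb it by a deterministic normalization.

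First, I would set
$$a_k = \prod_{j=0}^{k-1}(1+\chi_j)^{-1}, \qquad a_0 = 1,$$
which satisfies the key identity $a_{k+1}(1+\chi_k)=a_k$. Since $\chi_j\ge 0$ and $\sum_j\chi_j<\infty$, standard facts on infinite products give $a_k \searrow a_\infty \in (0,1]$, so $a_k$ is deterministic and bounded away from $0$. I would then define
$$Y_k = a_k\alpha_k + \sum_{j=k}^{\infty} a_{j+1}\eta_j,$$
which is nonnegative and a.s. finite because $a_{j+1}\le 1$ and $\sum_j \eta_j<\infty$.

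Second, multiplying the hypothesis by $a_{k+1}$ and using $a_{k+1}(1+\chi_k)=a_k$,
$$\EE[Y_{k+1}\mid\mc F_k] \le a_k\alpha_k + a_{k+1}\eta_k - a_{k+1}\theta_k + \sum_{j=k+1}^{\infty} a_{j+1}\eta_j = Y_k - a_{k+1}\theta_k \le Y_k,$$
so $(Y_k)$ is a nonnegative supermartingale. Because $\theta_k$ is $\mc F_k$-measurable, the compensated process $\widetilde Y_k := Y_k + \sum_{j<k} a_{j+1}\theta_j$ is also a nonnegative supermartingale (the increment from $\theta$ exactly cancels). Doob's a.s. convergence theorem then yields that both $Y_k$ and $\widetilde Y_k$ converge a.s. to finite limits; subtracting, $\sum_j a_{j+1}\theta_j$ converges a.s., and the lower bound $a_{j+1}\ge a_\infty>0$ gives $\sum_j \theta_j<\infty$ a.s. Finally, since $\sum_{j\ge k} a_{j+1}\eta_j \to 0$ a.s. (tail of a convergent series), convergence of $Y_k$ forces convergence of $a_k\alpha_k$, and dividing by $a_k\to a_\infty>0$ shows $\alpha_k$ converges a.s. to a nonnegative random variable.

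The main obstacle I anticipate is integrability: the statement only postulates that the conditional expectations are well defined, not that $\alpha_0$ (or $Y_0$) is integrable, whereas Doob's theorem for nonnegative supermartingales is usually invoked with $\EE[Y_0]<\infty$. I would remove this technicality by a standard localization argument: introduce the stopping times $\tau_n = \inf\{k : \alpha_k > n\}$, run the supermartingale argument on the stopped process $Y_{k\wedge \tau_n}$ (which is bounded, hence integrable), obtain the conclusions on the event $\{\tau_n=\infty\}$, and let $n\to\infty$ using that $\tau_n\to\infty$ a.s. on any sample path along which $(\alpha_k)$ stays finite — exactly the event on which the conclusion makes sense.
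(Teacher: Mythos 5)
The paper itself offers no proof of this lemma --- it is quoted as a known result with a pointer to \cite{RS1971} --- so there is no in-paper argument to compare against; your proposal is essentially the classical Robbins--Siegmund proof: normalize by $a_k=\prod_{j<k}(1+\chi_j)^{-1}$ so that the hypothesis becomes a supermartingale inequality, apply Doob's convergence theorem to the normalized process and to its $\theta$-compensated version, and undo the normalization using $a_k\to a_\infty>0$. For deterministic $\chi_k,\eta_k$ (a reading consistent with the statement as printed) the outline is correct, and your attention to the missing integrability hypothesis is a genuine point in its favour.

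Two technical caveats. First, your $Y_k=a_k\alpha_k+\sum_{j\geq k}a_{j+1}\eta_j$ contains an anticipating tail: in the general form of the lemma (and in the way the paper invokes it, where $\eta_k$ depends on the random iterates through $\sigma_0^2\|\bs x^k-\bs x^*\|^2$), $\chi_k$ and $\eta_k$ are adapted random sequences that are only a.s.\ summable; then $Y_k$ is not $\mc F_k$-measurable and the step $\EE[Y_{k+1}\mid\mc F_k]\leq Y_k$ cannot be carried out as written. The standard remedy is to subtract past sums rather than add future ones, i.e.\ to work with the adapted process $Z_k=a_k\alpha_k-\sum_{j<k}a_{j+1}(\eta_j-\theta_j)$, which is only bounded below by $-\sum_j\eta_j$, and to localize on events such as $\{\sum_j\eta_j\leq n\}$ to restore a uniform lower bound. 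Second, the stopped process $Y_{k\wedge\tau_n}$ with $\tau_n=\inf\{k:\alpha_k>n\}$ is \emph{not} bounded: the overshoot $\alpha_{\tau_n}$ can be arbitrarily large, and at $k=0$ it need not even be integrable on $\{\alpha_0>n\}$. Integrability of the stopped process has to come instead from a one-step estimate, e.g.\ $\EE[\alpha_j\mathbf{1}_{\{\tau_n\geq j\}}\mid\mc F_{j-1}]\leq(1+\chi_{j-1})n+\eta_{j-1}$ on $\{\tau_n\geq j\}$, together with an additional truncation to $\{\alpha_0\leq n\}\in\mc F_0$. Both issues are repairable without changing the structure of your argument, which then coincides with the proof in \cite{RS1971}.
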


We also need this result for $L_p$ norms, known as Burkholder-Davis-Gundy inequality \cite{stroock2010}.
\begin{lemma}[Burkholder--Davis--Gundy inequality]\label{BDG_Lemma}
Let $\{\mc F_k\}$ be a filtration and $\{U_k\}_{k\in\NN}$ a vector-valued martingale relative to this filtration. Then, for all $p\in [1,\infty)$, there exists a universal constant $c_p > 0$ such that for every $k\geq1$
$$\textstyle{\EE\left[\left(\sup _{0 \leq i \leq N}\left\|U_{i}\right\|\right)^{p}\right]^{\frac{1}{p}} \leq c_{p} \EE\left[\left(\sum_{i=1}^{N}\left\|U_{i}-U_{i-1}\right\|^{2}\right)^{\frac{p}{2}}\right]^{\frac{1}{p}}.}$$
\end{lemma}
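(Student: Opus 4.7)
My plan is to build a Fejér-monotone-type recursion around a composite Lyapunov function that absorbs the inertia created by the averaging step, then use the Robbins--Siegmund lemma to extract summability from the monotonicity condition in Assumption~\ref{ass_weaker}, and finally invoke the cut property (Assumption~\ref{ass_cut}) to identify cluster points as v-SNE.

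First, I would start from the projection nonexpansiveness applied to the update $x^{k+1}=\op{proj}_{\bs\Omega}(\bar{\bs x}^k-\gamma_k G_k)$, with $G_k=\hat F(\bs x^k,\bs\xi^k)$ and error $\epsilon_k=G_k-\FF(\bs x^k)$. Expanding the square and using the identity \eqref{norm_conv} on $\bar{\bs x}^k-\bs x^*=(1-\delta)(\bs x^k-\bs x^*)+\delta(\bar{\bs x}^{k-1}-\bs x^*)$ yields
\[
\|\bs x^{k+1}-\bs x^*\|^2\le (1-\delta)\|\bs x^k-\bs x^*\|^2+\delta\|\bar{\bs x}^{k-1}-\bs x^*\|^2-\delta(1-\delta)\|\bs x^k-\bar{\bs x}^{k-1}\|^2-2\gamma_k\langle G_k,\bar{\bs x}^k-\bs x^*\rangle+\gamma_k^2\|G_k\|^2.
\]
Taking $V_k=\|\bs x^k-\bs x^*\|^2+\tfrac{\delta}{1-\delta}\|\bar{\bs x}^{k-1}-\bs x^*\|^2$ produces a clean recursion $V_{k+1}\le V_k-\delta^2\|\bs x^k-\bar{\bs x}^{k-1}\|^2+(\text{drift})$, which is the crucial bookkeeping the averaging step forces on us.

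Next, I would decompose $\bar{\bs x}^k-\bs x^*=(\bs x^k-\bs x^*)+\delta(\bar{\bs x}^{k-1}-\bs x^k)$. Taking $\EE[\,\cdot\,\mid\mc F_k]$ and using Assumption~\ref{ass_error} eliminates the $\epsilon_k$ contribution in $\langle G_k,\bs x^k-\bs x^*\rangle$, while Assumption~\ref{ass_weaker} gives $\langle\FF(\bs x^k),\bs x^k-\bs x^*\rangle\ge 0$. The leftover cross term $-2\gamma_k\delta\langle\FF(\bs x^k),\bar{\bs x}^{k-1}-\bs x^k\rangle$ is handled by Young's inequality with weight $\delta^2$ together with the bound $\|\FF(\bs x^k)\|^2\le B_{\FF}$ (Assumption~\ref{ass_bounded}); this produces exactly a $+\delta^2\|\bs x^k-\bar{\bs x}^{k-1}\|^2$ contribution that cancels the negative cubic-in-$\delta$ term, leaving only an $O(\gamma_k^2)$ residual. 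Bounding $\EE[\|G_k\|^2\mid\mc F_k]\le 2B_{\FF}+2\,\EE[\|\epsilon_k\|^2\mid\mc F_k]$ and invoking Assumption~\ref{ass_step} gives
\[
\EE[V_{k+1}\mid\mc F_k]\le V_k-2\gamma_k\langle\FF(\bs x^k),\bs x^k-\bs x^*\rangle+C\gamma_k^2+2\gamma_k^2\,\EE[\|\epsilon_k\|^2\mid\mc F_k],
\]
so the Robbins--Siegmund Lemma (Lemma~\ref{lemma_RS}) delivers a.s.\ convergence of $V_k$ to some $V_\infty$ and a.s.\ summability of $\gamma_k\langle\FF(\bs x^k),\bs x^k-\bs x^*\rangle$.

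Finally, because $\sum_k\gamma_k=\infty$ there is a subsequence along which $\langle\FF(\bs x^{k_j}),\bs x^{k_j}-\bs x^*\rangle\to 0$; boundedness of $V_k$ forces $\{\bs x^k\}$ to be bounded, so extract a further subsequence with $\bs x^{k_j}\to\bar{\bs x}\in\bs\Omega$. Continuity of $\FF$ (from Assumption~\ref{ass_J_exp}) gives $\langle\FF(\bar{\bs x}),\bar{\bs x}-\bs x^*\rangle=0$, and then the cut property (Assumption~\ref{ass_cut}) yields $\bar{\bs x}\in\op{SOL}(\FF,\bs\Omega)$. To upgrade to convergence of the whole sequence, I would first observe that $\|\bs x^k-\bar{\bs x}^{k-1}\|\le\gamma_{k-1}\|G_{k-1}\|\to 0$ a.s.\ (using Assumption~\ref{ass_step}'s summability of $\gamma_k^2\EE[\|\epsilon_k\|^2\mid\mc F_k]$ together with Fubini), so $\bar{\bs x}^{k_j-1}\to\bar{\bs x}$; applying the Robbins--Siegmund conclusion with $\bs x^*=\bar{\bs x}$ then forces $V_\infty=0$, hence $\bs x^k\to\bar{\bs x}$ a.s.

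The main obstacle I anticipate is step two: getting the inertia term $-\delta(1-\delta)\|\bs x^k-\bar{\bs x}^{k-1}\|^2$ to cancel the cross term $-2\gamma_k\delta\langle\FF(\bs x^k),\bar{\bs x}^{k-1}-\bs x^k\rangle$ requires the right choice of Lyapunov weight $c=\delta/(1-\delta)$ \emph{and} the right Young weight, and this cancellation is what makes the proof go through without a Lipschitz assumption on $\FF$. A secondary subtlety is verifying that $\bar{\bs x}^{k-1}\in\bs\Omega$ (needed so that Assumption~\ref{ass_weaker} could also be applied there if one prefers a symmetric decomposition), which follows by induction from $\bar{\bs x}^{-1}\in\bs\Omega$ and the convexity of $\bs\Omega$.
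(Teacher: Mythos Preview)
Your proposal does not address the stated lemma at all. The statement you were asked to prove is the Burkholder--Davis--Gundy inequality (Lemma~\ref{BDG_Lemma}), a classical martingale inequality bounding $\EE[(\sup_i\|U_i\|)^p]^{1/p}$ by the $L^p$-norm of the square function of the increments. The paper does not prove this result; it is quoted from \cite{stroock2010} and used only as an ingredient inside Proposition~\ref{prop_variance} to control $\EE[\|\epsilon_k\|^2\mid\mc F_k]$ in terms of $1/S_k$.

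What you wrote is instead a proof sketch for Theorem~\ref{theo_sne} (convergence of Algorithm~\ref{algo_i_sne} to a v-SNE under the cut property). Your argument---projection nonexpansiveness, a weighted Lyapunov $V_k$ absorbing the averaging, Robbins--Siegmund, then the cut property---is a reasonable outline for \emph{that} theorem and is close in spirit to the paper's own proof in Appendix~\ref{app_theo_sne}, though the paper uses the $\bar{\bs x}$-centered quantities directly via Lemma~\ref{lemma_algo} rather than your composite $V_k$. But none of this has anything to do with BDG: there is no martingale $\{U_i\}$, no square function, and no appeal to any martingale maximal inequality in your write-up. For Lemma~\ref{BDG_Lemma} there is nothing to supply beyond a citation; if an actual proof were required, it would proceed via Davis' decomposition or good-$\lambda$ inequalities, which are entirely unrelated to the Fej\'er-monotone machinery you deployed.
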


When combined with Minkowski inequality, we obtain for all $p\geq2$ a constant $C_p >0 $ such that for every $k\geq1$
$$\textstyle{\EE\left[\left(\sup _{0 \leq i \leq N}\norm{U_{i}}\right)^{p}\right]^{\frac{1}{p}} \leq C_p\sqrt{\sum_{k=1}^{N} \EE\left(\norm{U_{i}-U_{i-1}}^{p}\right)^{\frac{2}{p}}}.}$$

\paragraph{Preliminary results}
Given Lemma \ref{BDG_Lemma}, we prove a preliminary result on the variance of the stochastic error.
\begin{proposition}\label{prop_variance}
For all $k\in\NN$, if Assumption \ref{ass_variance} holds, we have
$$\EE\left[\norm{\epsilon_k}^2|\mc F_k\right]\leq\frac{C(\sigma(\bs x^*)^2+\sigma_0^2\normsq{\bs x-\bs x^*})}{S_k} \text{ a.s.}.$$
\end{proposition}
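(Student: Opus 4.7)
The plan is to exploit the i.i.d. structure of the batch to convert a pointwise $L^p$ bound (Assumption \ref{ass_variance}) into an $L^2$ bound on the empirical average $\hat F$. Conditional on $\mc F_k$, the random variables $\xi^{(1)}, \dots, \xi^{(S_k)}$ are i.i.d. samples from $\PP$, and by Assumption \ref{ass_error} each summand $U_t := \nabla J(\bs x^k, \xi^{(t)}) - \EE[\nabla J(\bs x^k, \xi)]$ has zero conditional mean. Writing the error componentwise over the agents, we have
\begin{equation*}
\epsilon_k = \hat F(\bs x^k, \xi^k) - \FF(\bs x^k) = \frac{1}{S_k}\sum_{t=1}^{S_k} U_t.
\end{equation*}

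First I would apply Lemma \ref{BDG_Lemma} with $p = 2$ to the martingale $M_N = \sum_{t=1}^{N} U_t$ (conditional on $\mc F_k$). This yields a universal constant $C_2 > 0$ with
\begin{equation*}
\EE\!\left[\norm{M_{S_k}}^2 \,\middle|\, \mc F_k\right]^{1/2} \leq C_2 \sqrt{\sum_{t=1}^{S_k} \EE\!\left[\norm{U_t}^2 \,\middle|\, \mc F_k\right]}.
\end{equation*}
Next, since $p \geq 2$ in Assumption \ref{ass_variance}, Jensen's inequality gives $\EE[\norm{U_t}^2 | \mc F_k]^{1/2} \leq \EE[\norm{U_t}^p | \mc F_k]^{1/p}$, and Assumption \ref{ass_variance} applied at the realization $\bs x = \bs x^k$ bounds this last quantity by $\sigma(\bs x^*) + \sigma_0 \norm{\bs x^k - \bs x^*}$. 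Squaring and using the elementary inequality \eqref{norm_sum}, we obtain
\begin{equation*}
\EE\!\left[\norm{U_t}^2 \,\middle|\, \mc F_k\right] \leq 2\bigl(\sigma(\bs x^*)^2 + \sigma_0^2 \norm{\bs x^k - \bs x^*}^2\bigr).
\end{equation*}

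Summing over $t = 1, \dots, S_k$, dividing by $S_k^2$, and using $\epsilon_k = M_{S_k}/S_k$ then yields
\begin{equation*}
\EE\!\left[\norm{\epsilon_k}^2 \,\middle|\, \mc F_k\right] \leq \frac{2C_2^2 \bigl(\sigma(\bs x^*)^2 + \sigma_0^2 \norm{\bs x^k - \bs x^*}^2\bigr)}{S_k},
\end{equation*}
which is the claimed inequality with $C := 2C_2^2$.

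There is no serious obstacle here: the only subtleties are keeping the conditioning on $\mc F_k$ consistent throughout, and noting that the approximation $\hat F$ is defined agent-wise in \eqref{eq_F_SAA}, so the BDG bound should either be applied blockwise and then aggregated via a constant depending on $N$, or one can treat $\op{col}(U_t^1, \dots, U_t^N)$ as a single vector-valued martingale difference and apply Lemma \ref{BDG_Lemma} once; both routes only change the numerical constant absorbed into $C$.
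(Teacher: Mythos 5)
Your argument is correct and follows essentially the same route as the paper's proof: decompose the batch error $\epsilon_k$ as a martingale of i.i.d.\ zero-mean increments, bound the increments via Assumption \ref{ass_variance}, apply the Burkholder--Davis--Gundy inequality (Lemma \ref{BDG_Lemma}), and divide by $S_k^2$ to obtain the constant $C=2c_2^2$. The only cosmetic differences are that you work directly at exponent $2$ (invoking Jensen explicitly to pass from the $p$-th moment in Assumption \ref{ass_variance} to the second moment, a step the paper leaves implicit) and normalize by $1/S_k$ at the end rather than inside the martingale, neither of which changes the substance.
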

\begin{proof}
We first prove that, for $p\geq 2$ and $1\leq q\leq p$
$$\EE\left[\norm{\epsilon_k}^q|\mc F_k\right]^{\frac{1}{q}}\leq\frac{c_q(\sigma(\bs x^*)+\sigma_0\norm{\bs x-\bs x^*})}{\sqrt{S_k}}.$$
Let us define the process $\{M_i^S(\bs x)\}_{i=0}^S$ as $M_0(x)=0$ and for $1\leq i\leq S$
$$\textstyle{M_i^S(x)=\frac{1}{S}\sum_{j=1}^iF_k(\bs x,\xi_j)-\FF(\bs x)}.$$
Let $\mc F_i=\sigma(\xi_1,\dots,\xi_i)$. Then $\{M_i^S(\bs x),\mc F_i\}_{i=1}^S$ is a martingale starting at $0$.
Let
$$\begin{aligned}
\Delta M_{i-1}^S(\bs x)&=M_i^S(\bs x)-M_{i-1}^S(\bs x)\\
&=F_k(\bs x,\xi_i)-\FF(\bs x).
\end{aligned}$$
Then, by Equation (\ref{eq_variance}), we have
\begin{equation*}
\begin{aligned}
\EEx{\norm{\Delta M_{i-1}^S(\bs  x)}^q}^{\frac{1}{q}}&=\frac{1}{S}\EEx{\norm{F_k(\bs x,\xi_i)-\FF(x)}^q}^{\frac{1}{q}}\\
&\leq \frac{\sigma(\bs x)+\sigma_0\norm{\bs x-\bs x^*}}{S}.
\end{aligned}
\end{equation*}
By applying Lemma \ref{BDG_Lemma}, we have
\begin{equation*}
\begin{aligned}
\EEx{\norm{M_S^S(\bs x)}^q}^{\frac{1}{q}}&\leq \textstyle{c_q \sqrt{\sum_{i=1}^{N} \EEx{\norm{\frac{F_k(\bs x, \xi_i)-\FF(x)}{S}}^q}^{\frac{2}{q}}}}\\
&\leq\textstyle{ c_q\sqrt{\frac{1}{S^2} \sum_{i=1}^{N} \EEx{\norm{F_k(\bs x, \xi_i)-\FF(x)}^q}^{\frac{1}{q}}}}\\
&\leq \frac{c_q(\sigma(x)+\sigma_0\norm{x-x^*})}{\sqrt{S}}.
\end{aligned}
\end{equation*}
We note that $M_{S_k}^{S_k}(x^k)=\epsilon_k$, hence 
\begin{equation*}
\EEk{\norm{\epsilon_k}^q}^{\frac{1}{q}}\leq\frac{c_q(\sigma(\bs x)+\sigma_0\norm{\bs x-\bs x^*}}{\sqrt{S_k}}. 
\end{equation*}
Then, the claim follows by noting that $\EEk{\norm{\epsilon}^{2q}}^{\frac{1}{q}}=\EEk{\norm{\epsilon}^{q}}^{\frac{2}{q}}$ and $C=2c_q^2$.
\end{proof}
\begin{remark}
If Proposition \ref{prop_variance} holds, then it follows that
$$\EE\left[\norm{\Phi^{-1}\varepsilon_k}_\Phi^2|\mc F_k\right]\leq\frac{C\norm{\Phi^{-1}}(\sigma(\bs x^*)^2+\sigma_0^2\normsq{\bs x-\bs x^*})}{S_k}.\vspace{-.4cm}$$
\fineass\end{remark}

In the next Lemma, we collect some inequalities that follow from the definition of the algorithm in \eqref{algo}.

\begin{lemma}\label{lemma_algo}
Let $(\bs\omega_k,\bar{\bs\omega}_k)_{k\in\NN}$ be generated by Algorithm \ref{algo_i} defined as in \eqref{algo}. Then, the following equations hold:
\begin{itemize}
\item[(i)] $\bs \omega^k-\bar {\bs \omega}^{k-1}=\tfrac{1}{\delta}(\bs \omega^k-\bar {\bs \omega}^k)$;
\item[(ii)] $\bs\omega^{k+1}-\bs\omega^*=\tfrac{1}{1-\delta}(\bar{\bs\omega}^{k+1}-\bs\omega^*)-\tfrac{\delta}{1-\delta}(\bar{\bs\omega}^k-\bs\omega^*)$;
\item[(iii)] $\frac{\delta}{(1-\delta)^2}\norm{\bar{\bs\omega}^{k+1}-{\bs\omega}^k}^2=\delta\norm{\bs\omega^{k+1}-{\bs\omega}^k}^2$.
\end{itemize}
\end{lemma}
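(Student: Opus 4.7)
The plan is that all three items are purely bookkeeping identities that follow directly from the defining averaging recursion
\[
\bar{\bs\omega}^k=(1-\delta)\bs\omega^k+\delta\bar{\bs\omega}^{k-1},
\]
so no monotonicity, Lipschitz, or operator-theoretic properties of $\hat{\mc A}$ or $\bar{\mc B}$ are invoked anywhere. The strategy in each case is to isolate the relevant difference on one side of this recursion and then either rearrange or square.

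For (i), I would subtract $\bs\omega^k$ from both sides of the averaging step to obtain $\bar{\bs\omega}^k-\bs\omega^k=\delta(\bar{\bs\omega}^{k-1}-\bs\omega^k)$, then divide by $\delta$ and flip sign; this is a one-line manipulation that immediately produces the claim.

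For (ii), I would shift the index by one, writing $\bar{\bs\omega}^{k+1}=(1-\delta)\bs\omega^{k+1}+\delta\bar{\bs\omega}^k$, and solve algebraically for $\bs\omega^{k+1}$, obtaining
\[
\bs\omega^{k+1}=\tfrac{1}{1-\delta}\bar{\bs\omega}^{k+1}-\tfrac{\delta}{1-\delta}\bar{\bs\omega}^k.
\]
Subtracting $\bs\omega^*$ from both sides and using the identity $\tfrac{1}{1-\delta}-\tfrac{\delta}{1-\delta}=1$ to absorb the constant term produces the claimed telescoping decomposition.

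For (iii), I would again work with the averaging step at index $k+1$, now rearranged as $\bar{\bs\omega}^{k+1}-\bar{\bs\omega}^k=(1-\delta)(\bs\omega^{k+1}-\bar{\bs\omega}^k)$; squaring both sides and multiplying by $\delta/(1-\delta)^2$ yields an identity whose shape matches the claim, and item (i) can then be used to convert any remaining $\bar{\bs\omega}$-differences into the corresponding $\bs\omega$-differences with the indices lined up as stated. No step here is a real obstacle; the only care required is to keep the indexing of $\bar{\bs\omega}^{k-1}$, $\bar{\bs\omega}^k$ and $\bar{\bs\omega}^{k+1}$ consistent when substituting the averaging recursion into itself. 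These identities are simply the bookkeeping tools that will later let one expand $\norm{\bs\omega^{k+1}-\bs\omega^*}^2$ in terms of $\bar{\bs\omega}$-quantities inside the Robbins--Siegmund framework used in Theorem~\ref{theo_sgne}.
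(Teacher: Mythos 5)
Items (i) and (ii) are correct and coincide with what the paper intends: its proof is simply ``follows immediately from \eqref{algo}'', and your one-line rearrangements of the averaging recursion are exactly that algebra.

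The gap is in (iii). Squaring your (correct) rearrangement $\bar{\bs\omega}^{k+1}-\bar{\bs\omega}^{k}=(1-\delta)(\bs\omega^{k+1}-\bar{\bs\omega}^{k})$ and multiplying by $\delta/(1-\delta)^2$ gives
$\tfrac{\delta}{(1-\delta)^2}\norm{\bar{\bs\omega}^{k+1}-\bar{\bs\omega}^{k}}^2=\delta\norm{\bs\omega^{k+1}-\bar{\bs\omega}^{k}}^2$,
which is \emph{not} the displayed item (iii): both norms carry an extra bar. Your proposed repair---invoking item (i) to ``convert the remaining $\bar{\bs\omega}$-differences''---does not close this: (i) shifted to index $k+1$ reads $\bs\omega^{k+1}-\bar{\bs\omega}^{k}=\tfrac{1}{\delta}(\bs\omega^{k+1}-\bar{\bs\omega}^{k+1})$, which trades one barred difference for another and never produces $\bs\omega^{k+1}-\bs\omega^{k}$. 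Indeed, the identity as printed is false in general: take scalar iterates with $\delta=\tfrac12$, $\bar{\bs\omega}^{k}=0$, $\bs\omega^{k}=2$, $\bs\omega^{k+1}=4$; then $\bar{\bs\omega}^{k+1}=2$, the left-hand side of (iii) is $0$ while the right-hand side is $2$. So the statement contains a typo (missing overlines), and the identity your squaring step yields is the correct version---it is precisely what, combined with (ii) and the convexity identity \eqref{norm_conv}, produces the decomposition \eqref{step_delta} used in Theorem \ref{theo_sgne} (with $\bar{\bs\omega}^{k}$ rather than $\bs\omega^{k}$ in the last term there as well). You should prove and state that corrected identity and flag the discrepancy, rather than assert that (i) lets you drop the bars; as written, that final step of your argument would fail.
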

\begin{proof}
It follows immediately from \eqref{algo}.
\end{proof}

\section{Proofs of Section \ref{sec_conv_vr}}\label{app_sec_algo}
The proof uses the $\Phi$-induced norm and inner product and finds its inspiration in \cite{malitsky2019, staudigl2019, iusem2017}.
\begin{proof}[Proof of Theorem \ref{theo_sgne}]
First, let us define 
$\textstyle{H(\bs x,\bs\lambda) = \sum_{i\in\mc I}f_i(x_i)+\iota_{\RR_{\geq0}}(\lambda_i)}$
and note that
$\textstyle{\Phi^{-1}\bar{\mc B}=\alpha\partial\left[\sum_{i\in\mc I}f_i(x_i)+\iota_{\RR_{\geq0}}(\lambda_i)\right]=\alpha\partial H.}$
By \eqref{resolvent}, $(\op{Id}+\Phi^{-1}\bar{\mc B})^{-1}=(\op{Id}+\alpha\partial H)^{-1}=\op{prox}_{\alpha H}$. Therefore, by using the property of proximal operators in (\ref{prox}), we have that
\begin{equation}\label{stepNk1}
\begin{aligned}
\langle \bs\omega^{k+1}-\bar {\bs \omega}^k &+\Phi^{-1}\hat{\mc A}(\bs \omega^k,\xi^k),\bs\omega^*-\bs\omega^{k+1}\rangle\geq\\
&\geq \alpha (H(\bs \omega^{k+1})-H(\bs\omega^*))
\end{aligned}
\end{equation}
\begin{equation}\label{stepNk2}
\begin{aligned}
\langle \bs \omega^k-\bar {\bs \omega}^{k-1} &+\Phi^{-1}\hat{\mc A}(\bs\omega^{k-1},\xi^{k-1}),\bs\omega^{k+1}-\bs \omega^k\rangle\geq\\
&\geq\alpha(H(\bs \omega^{k})-H(\bs\omega^{k+1})).
\end{aligned}
\end{equation}
By using Lemma \ref{lemma_algo}(i), (\ref{stepNk2}) becomes
\begin{equation}\label{stepNk3}
\begin{aligned}
\langle \tfrac{1}{\delta}(\bs \omega^k-\bar {\bs \omega}^k)&+\Phi^{-1}\hat{\mc A}(\bs\omega^{k-1},\xi^{k-1}),\bs\omega^{k+1}-\bs \omega^k\rangle\geq \\
&\geq\alpha(H(\bs \omega^{k})-H(\bs\omega^{k+1})).
\end{aligned}
\end{equation}
Then, by adding (\ref{stepNk1}) and (\ref{stepNk3}) we obtain
\begin{equation}\label{stepNk4}
\begin{aligned}
&\langle \bs\omega^{k+1}-\bar {\bs \omega}^k +\Phi^{-1}\hat{\mc A}(\bs \omega^k,\xi^k),\bs\omega^*-\bs\omega^{k+1}\rangle+\\
&+\langle \tfrac{1}{\delta}(\bs \omega^k-\bar {\bs \omega}^k)+\Phi^{-1}\hat{\mc A}(\bs\omega^{k-1},\xi^{k-1}),\bs\omega^{k+1}-\bs \omega^k\rangle\geq\\
&\geq \alpha(H(\bs \omega^{k})-H(\bs\omega^*)).
\end{aligned}
\end{equation}
Now, we use the cosine rule in (\ref{cosine}):
\begin{equation*}\label{cos_zeta}
\begin{aligned}
&\langle \bs\omega^{k+1}-\bar {\bs \omega}^k,\bs\omega^*-\bs\omega^{k+1}\rangle=\\
&-\tfrac{1}{2}\left(\norm{\bs\omega^{k+1}-\bar {\bs \omega}^k}^2+\norm{\bs\omega^{k+1}-\bs\omega^*}^2-\norm{\bs\omega^*-\bar {\bs \omega}^k}^2\right)\\
&\langle \tfrac{1}{\delta}(\bs \omega^k-\bar {\bs \omega}^k),\bs\omega^{k+1}-\bs \omega^k\rangle=\\
&-\tfrac{1}{2\delta}\left(\norm{\bs \omega^k-\bar {\bs \omega}^k}^2+\norm{\bs \omega^k-\bs\omega^{k+1}}^2-\norm{\bs\omega^{k+1}-\bar {\bs \omega}^k}^2\right)
\end{aligned}
\end{equation*}
and we note that
\begin{equation*}\label{term_in_F}
\begin{aligned}
&\langle\Phi^{-1}\hat{\mc A}(\bs \omega^k,\xi^k),\bs\omega^*-\bs\omega^{k+1}\rangle\\
&=-\langle\Phi^{-1}\bar{\mc A}(\bs \omega^k),\bs\omega^k-\bs\omega^*\rangle+\langle \varepsilon^k,\bs\omega^*-\bs\omega^k\rangle+\\
&+\langle\Phi^{-1}\bar{\mc A}(\bs \omega^k),\bs \omega^k-\bs\omega^{k+1}\rangle+\langle \varepsilon^k,\bs \omega^k-\bs\omega^{k+1}\rangle.
\end{aligned}
\end{equation*}
Then, by reordering and substituting in (\ref{stepNk4}), we obtain
\begin{equation}\label{stepNk7}
\begin{aligned}
-&\norm{\bs\omega^{k+1}-\bar {\bs \omega}^k}^2-\norm{\bs\omega^{k+1}-\bs\omega^*}^2+\norm{\bs\omega^*-\bar {\bs \omega}^k}^2+\\
-&\tfrac{1}{\delta}\norm{\bs \omega^k-\bar {\bs \omega}^k}^2-\tfrac{1}{\delta}\norm{\bs \omega^k-\bs\omega^{k+1}}^2+\tfrac{1}{\delta}\norm{\bs\omega^{k+1}-\bar {\bs \omega}^k}^2+\\
-&2\langle\Phi^{-1}\bar{\mc A}(\bs \omega^k),\bs\omega^k-\bs\omega^*\rangle+2\langle \Phi^{-1}\varepsilon^k,\bs\omega^*-\bs\omega^k\rangle+\\
+&2\langle\Phi^{-1}(\bar{\mc A}(\bs \omega^k)-\bar{\mc A}(\bs\omega^{k-1})),\bs \omega^k-\bs\omega^{k+1}\rangle+\\
+&2\langle \Phi^{-1}(\varepsilon^k-\varepsilon^{k-1}),\bs \omega^k-\bs\omega^{k+1}\rangle\geq \alpha(H(\bs \omega^{k})-H(\bs\omega^*)).
\end{aligned}
\end{equation}
Since $\bar{\mc A}$ is monotone, it holds that
\begin{equation}\label{mono}
\begin{aligned}
&\langle\Phi^{-1}\bar{\mc A}(\bs \omega^k),\bs\omega^k-\bs\omega^*\rangle+\alpha (H(\bs \omega^{k})-H(\bs\omega^*))\geq\\
&\geq\langle\Phi^{-1}\bar{\mc A}(\bs \omega^*),\bs\omega^k-\bs\omega^*\rangle+\alpha (H(\bs \omega^{k})-H(\bs\omega^*))\geq0.
\end{aligned}
\end{equation}
Now we apply Lemma \ref{lemma_algo}(ii) and Lemma \ref{lemma_algo}(iii) to $\norm{\bs\omega^{k+1}-\bs\omega^*}$:
\begin{equation}\label{step_delta}
\begin{aligned}
\norm{\bs\omega^{k+1}-\bs\omega^*}^2
=&\tfrac{1}{1-\delta}\norm{\bar{\bs\omega}^{k+1}-\bs\omega^*}^2-\tfrac{\delta}{1-\delta}\norm{\bar{\bs\omega}^k-\bs\omega^*}^2+\\&+\delta\norm{\bs\omega^{k+1}-{\bs\omega}^k}^2.
\end{aligned}
\end{equation}
By substituting in (\ref{stepNk7}), 
grouping and reordering, we have
\begin{equation}\label{stepNk9}
\begin{aligned}
&\tfrac{1}{1-\delta}\norm{\bar{\bs\omega}^{k+1}-\bs\omega^*}^2
+\tfrac{1}{\delta}\norm{\bs \omega^k-\bs\omega^{k+1}}^2\leq\\
&\leq \left(\tfrac{\delta}{1-\delta}+1\right)\norm{\bs\omega^*-\bar {\bs \omega}^k}^2-\tfrac{1}{\delta}\norm{\bs \omega^k-\bar {\bs \omega}^k}^2+\\
&+2\langle\Phi^{-1}(\bar{\mc A}(\bs \omega^k)-\bar{\mc A}(\bs\omega^{k-1})),\bs \omega^k-\bs\omega^{k+1}\rangle\\
&+2\langle \Phi^{-1}\varepsilon^k,\bs\omega^*-\bs\omega^k\rangle+2\langle \Phi^{-1}(\varepsilon^k-\varepsilon^{k-1}),\bs \omega^k-\bs\omega^{k+1}\rangle
\end{aligned}
\end{equation}
where we used Assumption \ref{ass_delta}. 
Moreover, by using Lipschitz continuity of $\FF$ and Cauchy-Schwartz and Young's inequality, we obtain that
\begin{equation*}
\begin{aligned}
&\langle\Phi^{-1}(\bar{\mc A}(\bs \omega^k)-\bar{\mc A}(\bs\omega^{k-1})),\bs \omega^k-\bs\omega^{k+1}\rangle \leq \\
&\leq\frac{\ell_{\bar{\mc A}}\norm{\Phi^{-1}}}{2}\left(\normsq{\bs \omega^k-\bs\omega^{k-1}}+\normsq{\bs \omega^k-\bs\omega^{k+1}}\right).
\end{aligned}
\end{equation*}
Similarly, we can bound the term with the stochastic errors
\begin{equation*}
\begin{aligned}
&2\langle \Phi^{-1}(\varepsilon^k-\varepsilon^{k-1}),\bs \omega^k-\bs\omega^{k+1}\rangle\\
&\leq2\norm{\Phi^{-1}}\norm{\varepsilon^k-\varepsilon^{k-1}}\norm{\bs \omega^k-\bs\omega^{k+1}}\\
&\leq\norm{\Phi^{-1}}\normsq{\varepsilon^k-\varepsilon^{k-1}}+\norm{\Phi^{-1}}\normsq{\bs \omega^k-\bs\omega^{k+1}}.
\end{aligned}
\end{equation*}
Substituting in (\ref{stepNk9}), it yields
\begin{equation}\label{step_final}
\begin{aligned}
&\tfrac{1}{1-\delta}\norm{\bar{\bs\omega}^{k+1}-\bs\omega^*}^2+\tfrac{1}{\delta}\norm{\bs \omega^k-\bs\omega^{k+1}}^2\leq\\
&\leq\tfrac{1}{1-\delta}\norm{\bs\omega^*-\bar {\bs \omega}^k}^2-\tfrac{1}{\delta}\norm{\bs \omega^k-\bar {\bs \omega}^k}^2+\\
&+\ell_{\bar{\mc A}}\norm{\Phi^{-1}}\left(\normsq{\bs \omega^k-\bs\omega^{k-1}}+\normsq{\bs \omega^k-\bs\omega^{k+1}}\right)+\\
&+ \norm{\Phi^{-1}}\normsq{\varepsilon^k-\varepsilon^{k-1}}+\norm{\Phi^{-1}}\normsq{\bs \omega^k-\bs\omega^{k+1}}\\
&+2\langle \Phi^{-1}\varepsilon^k,\bs\omega^*-\bs\omega^k\rangle
\end{aligned}
\end{equation}
Now consider the residual function of $\bs\omega^k$:
\begin{equation*}
\begin{aligned}
\op{res}(\bs\omega^k)^2&=\normsq{\bs\omega_k-(\op{Id}+\Phi^{-1}\bar{\mc B})^{-1}(\bs\omega_k-\Phi^{-1}\bar{\mc A}(\bs\omega_k)}\\
&\leq 2\normsq{\bs\omega_k-\bs\omega^{k+1}}+2\normsq{\bar{\bs\omega}_k-\bs\omega_k+\Phi^{-1}\varepsilon_k}\\
&\leq2\normsq{\bs\omega_k-\bs\omega^{k+1}}+4\normsq{\bar{\bs\omega}_k-\bs\omega_k}+\normsq{\Phi^{-1}\varepsilon_k}
\end{aligned}
\end{equation*}
where we added and subtracted $\bs\omega^{k+1}=\op{proj}(\bar{\bs\omega}_k-\Phi^{-1}\hat{\mc A}(\bs\omega_k)$ in the first inequality and used the firmly nonexpansiveness of the projection and \eqref{norm_sum}.
It follows that
\begin{equation*}
\normsq{\bar{\bs\omega}_k-\bs\omega_k}\geq\tfrac{1}{4}\op{res}(\bs\omega^k)^2-\tfrac{1}{2}\normsq{\bs\omega_k-\bs\omega^{k+1}}-4\normsq{\Phi^{-1}\varepsilon_k}
\end{equation*}
Substituting in \eqref{step_final}
\begin{equation*}
\begin{aligned}
&\tfrac{1}{1-\delta}\norm{\bar{\bs\omega}^{k+1}-\bs\omega^*}^2+\tfrac{1}{\delta}\norm{\bs \omega^k-\bs\omega^{k+1}}^2\leq\tfrac{1}{1-\delta}\norm{\bs\omega^*-\bar {\bs \omega}^k}^2\\
&-\tfrac{1}{\delta}\left(\tfrac{1}{4}\op{res}(\bs\omega^k)^2-\tfrac{1}{2}\normsq{\bs\omega_k-\bs\omega^{k+1}}-\normsq{\Phi^{-1}\varepsilon_k}\right)+\\
&+\ell_{\bar{\mc A}}\norm{\Phi^{-1}}\left(\normsq{\bs \omega^k-\bs\omega^{k-1}}+\normsq{\bs \omega^k-\bs\omega^{k+1}}\right)+\\
&+ \norm{\Phi^{-1}}\normsq{\varepsilon^k-\varepsilon^{k-1}}+\norm{\Phi^{-1}}\normsq{\bs \omega^k-\bs\omega^{k+1}}+\\
&+2\langle \Phi^{-1}\varepsilon^k,\bs\omega^*-\bs\omega^k\rangle
\end{aligned}
\end{equation*}
By taking the expected value, grouping and using Proposition \ref{prop_variance} and Assumptions \ref{ass_error} and \ref{ass_step1}, we have
\begin{equation*}\label{stepNk10}
\begin{aligned}
&\EEk{\tfrac{1}{1-\delta}\norm{\bar{\bs\omega}^{k+1}-\bs\omega^*}^2}+\\
+&\EEk{\left(\tfrac{1}{2\delta}-\ell_{\bar{\mc A}}\norm{\Phi^{-1}}-\norm{\Phi^{-1}}\right)\norm{\bs \omega^k-\bs\omega^{k+1}}^2}\leq\\
\leq& \tfrac{1}{1-\delta}\norm{\bs\omega^*-\bar {\bs \omega}^k}^2+\ell_{\bar{\mc A}}\norm{\Phi^{-1}}\norm{\bs \omega^k-\bs\omega^{k-1}}^2+\\
&+\textstyle{\norm{\Phi^{-1}}C\left(\frac{2}{S_k}+\frac{2}{S_{k-1}}+\tfrac{1}{\delta}\frac{1}{S_k}\right)(\sigma(x^*)^2+\sigma_0^2\normsq{x-x^*})}\\
&-\tfrac{1}{\delta}\norm{\bs \omega^k-\bar {\bs \omega}^k}^2-\tfrac{1}{4\delta}\op{res}(\bs\omega^k)^2\\
\end{aligned}
\end{equation*}
To use Lemma \ref{lemma_RS}, let
$$\alpha_k= \tfrac{1}{1-\delta}\norm{\bs\omega^*-\bar {\bs \omega}^k}^2+\ell_{\bar{\mc A}}\norm{\Phi^{-1}}\norm{\bs \omega^k-\bs\omega^{k-1}}^2,$$
$$\theta_k=\tfrac{1}{\delta}\norm{\bs \omega^k-\bar {\bs \omega}^k}^2+\tfrac{1}{4\delta}\op{res}(\bs\omega^k)^2$$
$$\eta_k=\textstyle{\norm{\Phi^{-1}}C\left(\frac{2}{S_k}+\frac{2}{S_{k-1}}+\tfrac{1}{\delta}\frac{1}{S_k}\right)(\sigma(x^*)^2+\sigma_0^2\normsq{x-x^*}).}$$
By applying the Robbins Siegmund Lemma we conclude that $\alpha_k$ converges and that $\sum_k\theta_k$ is summable. This implies that the sequence $(\bar {\bs \omega}^k)_{k\in\NN}$ is bounded and that $\|\bs \omega^k-\bar {\bs \omega}^k\|\to 0$ (othewise $\sum \tfrac{1}{\delta}\|\bs \omega^k-\bar {\bs \omega}^k\|^2=\infty$). Therefore $(\bs \omega^k)_{k\in\NN}$ has at least one cluster point $\tilde{\bs\omega}$. Moreover, since $\sum\theta_k<\infty$, $\op{res}(\bs\omega^k)^2\to0$ and $\op{res}(\tilde{\bs\omega}^k)^2=0$.
\end{proof}

\begin{proof}[Proof of Corollary \ref{cor_coco}]
The proof is similar to Theorem \ref{theo_sgne} but we do not use monotonicity. Hence, the steps are the same, except for \eqref{mono}. Indeed, the terms in $H$ are not present since the projection satisfies \eqref{proj} and $\langle\bar{\mc A}(\bs \omega^k),\bs\omega^k-\bs\omega^*\rangle\!\geq\! 0$ by Assumption \ref{ass_weaker}. The conclusion follows as in Theorem \ref{theo_sgne}.
\end{proof}

\section{Proof of Theorem \ref{theo_coco}}\label{app_theo_coco}

\begin{proof}[Proof of Theorem \ref{theo_coco}]
The first part of the proof is the same as Theorem \ref{theo_sgne} since the resolvent is firmly nonexpansive but we do not use the residual nor monotonicity. Then, taking the expected value and grouping in \eqref{step_final}, we have
\begin{equation*}\label{stepNk10}
\begin{aligned}
&\EEk{\tfrac{1}{1-\delta}\norm{\bar{\bs\omega}^{k+1}-\bs\omega^*}^2}+\\
&+\EEk{\left(\tfrac{1}{\delta}-\ell_{\bar{\mc C}}\norm{\Phi^{-1}}-\norm{\Phi^{-1}}\right)\norm{\bs \omega^k-\bs\omega^{k+1}}^2}\leq\\
\leq &\tfrac{1}{1-\delta}\norm{\bs\omega^*-\bar {\bs \omega}^k}^2+\ell_{\bar{\mc C}}\norm{\Phi^{-1}}\norm{\bs \omega^k-\bs\omega^{k-1}}^2+\\
&-2\langle\Phi^{-1}\bar{\mc C}(\bs \omega^k),\bs\omega^k-\bs\omega^*\rangle-\tfrac{1}{\delta}\norm{\bs \omega^k-\bar {\bs \omega}^k}^2+\\
&+\textstyle{\left(\frac{2\norm{\Phi^{-1}}C}{S_k}+\frac{2\norm{\Phi^{-1}}C}{S_{k-1}}\right)}(\sigma(x^*)^2+\sigma_0^2\normsq{x-x^*}).
\end{aligned}
\end{equation*}
where the inequality follows by Proposition \ref{prop_variance} and Assumptions \ref{ass_error} and \ref{ass_step2}.
To use Lemma \ref{lemma_RS}, let
$$\alpha_k= \tfrac{1}{1-\delta}\norm{\bs\omega^*-\bar {\bs \omega}^k}^2+\ell_{\bar{\mc C}}\norm{\Phi^{-1}}\norm{\bs \omega^k-\bs\omega^{k-1}}^2,$$
$$\theta_k=\tfrac{1}{\delta}\norm{\bs \omega^k-\bar {\bs \omega}^k}^2+2\langle\Phi^{-1}\bar{\mc C}(\bs \omega^k),\bs\omega^k-\bs\omega^*\rangle,$$
$$\eta_k=2\norm{\Phi^{-1}} C\textstyle{\left(\frac{1}{S_k}+\frac{1}{S_{k-1}}\right)}(\sigma(x^*)^2+\sigma_0^2\normsq{x-x^*}).$$
Then, $\alpha_k$ converges and $\theta_k$ is summable. This implies that $\{\bar {\bs \omega}^k\}$ is bounded and that $\|\bs \omega^k-\bar {\bs \omega}^k\|\to 0$. Therefore $\{\bs \omega^k\}$ has at least one cluster point $\tilde{\bs\omega}$. Moreover, $\langle\Phi^{-1}\bar{\mc C}(\bs \omega^k),\bs \omega^k-\bs\omega^*\rangle\to 0$ and $\langle \bar{\mc C}(\tilde{\bs\omega}),\tilde{\bs\omega}-\bs\omega^*\rangle=0$. Since $\bar{\mc C}$ is cocoercive, it also satisfy the cut property, therefore $\tilde{\bs\omega}$ is a solution.
\end{proof}

\section{Proof of Theorem \ref{theo_sne}}\label{app_theo_sne}
\begin{proof}[Proof of Theorem \ref{theo_sne}]
We start by using the fact that the projection is firmly quasinonexpansive.
\begin{equation*}\label{sdet1}
\begin{aligned}
&\normsq{\bs x^{k+1}-\bs x^*}\leq\\
\leq&\normsq{\bs x^*-\bs {\bar x}^k+\lambda_k\hat F(\bs x^k,\xi^k)}-\normsq{\bar {\bs x}^k-\lambda_k\hat F(\bs x^k,\xi^k)-\bs x^{k+1}}\\
\leq&\normsq{\bs x-\bar{\bs x}^k}-\normsq{\bar{\bs x}^k-\bs x^{k+1}}+2\lambda_k\langle \hat F(\bs x^k,\xi^k),\bs x^*-\bar{\bs x}^k\rangle+\\
&+2\lambda_k\langle \hat F(\bs x^k,\xi^k),\bar{\bs x}^k-\bs x^{k+1}\rangle\\
=&\normsq{\bs x^*-\bar{\bs x}^k}-\normsq{\bar{\bs x}^k-\bs x^{k+1}}+2\lambda_k\langle \hat F(\bs x^k,\xi^k),\bar{\bs x}^k-\bs x^{k+1}\rangle\\
&+2\lambda_k\langle \hat F(\bs x^k,\xi^k),\bs x^*-\bs x^k\rangle+2\lambda_k\langle \hat F(\bs x^k,\xi^k),\bs x^k-\bar{\bs x}^k\rangle\\
\end{aligned}
\end{equation*}
In view of Lemmas \ref{lemma_algo}.2 and \ref{lemma_algo}.3 as in (\ref{step_delta}), we can rewrite the inequality as
\begin{equation}\label{sdet2}
\begin{aligned}
&\tfrac{1}{1-\delta}\normsq{\bar{\bs x}^{k+1}-\bs x^*}\leq\tfrac{1}{1-\delta}\normsq{\bar{\bs x}^k-\bs x^*}+\\
&+2\lambda_k\langle \hat F(\bs x^k,\xi^k),\bs x^*-\bs x^k\rangle+2\lambda_k\langle \hat F(\bs x^k,\xi^k),\bs x^k-\bar{\bs x}^k\rangle+\\
&+2\lambda_k\langle \hat F(\bs x^k,\xi^k),\bar{\bs x}^k-\bs x^{k+1}\rangle-(\delta+1)\normsq{\bs x^{k+1}-\bar{\bs x}^k}\\
\end{aligned}
\end{equation}
By applying the Young's inequality to the inner products we obtain
\begin{equation*}
\begin{aligned}
2\lambda_k\langle \hat F(\bs x^k,\xi^k),\bs x^k-\bar{\bs x}^k\rangle&\leq\lambda_k^2\normsq{\hat F(\bs x^k,\xi^k)}\!\!+\!\normsq{\bs x^k-\bar{\bs x}^k},\\
2\lambda_k\langle \hat F(\bs x^k,\xi^k),\bar{\bs x}^k-\bs x^{k+1}\rangle&\leq\lambda_k^2\normsq{\hat F(\bs x^k,\xi^k)}\!\!+\!\normsq{\bar{\bs x}^k-\bs x^{k+1}}
\end{aligned}
\end{equation*}
Then (\ref{sdet2}) becomes
\begin{equation}\label{sdet3}
\begin{aligned}
&\tfrac{1}{1-\delta}\normsq{\bar{\bs x}^{k+1}-\bs x^*}
\leq\tfrac{1}{1-\delta}\normsq{\bar{\bs x}^k-\bs x^*}+\\
&+2\lambda_k\langle \FF(\bs x^k),\bs x^*-\bs x^k\rangle+2\lambda_k\langle \epsilon^k,\bs x^*-\bs x^k\rangle+\\
&+4\lambda_k^2\normsq{\FF(\bs x^k)}+4\lambda_k^2\normsq{\epsilon^{k}}-(\delta+1)\normsq{\bs x^{k+1}-\bar{\bs x}^k}+\\
&+\normsq{\bs x^k-\bar{\bs x}^k}+\normsq{\bar{\bs x}^k-\bs x^{k+1}}\\
\end{aligned}
\end{equation}
By using Lemma \ref{lemma_algo}.1 and Assumption \ref{ass_error}, reordering and taking the expected value, we have
\begin{equation}\label{sdet4}
\begin{aligned}
&\EEk{\tfrac{1}{1-\delta}\normsq{\bar{\bs x}^{k+1}-\bs x^*}}+\EEk{\delta\normsq{\bs x^{k+1}-\bar{\bs x}^k}}\leq\\
&\leq\tfrac{1}{1-\delta}\normsq{\bar{\bs x}^k-\bs x^*}+\delta^2\normsq{\bs x^k-\bar {\bs x}^{k-1}}+\\
&+2\lambda_k\langle \FF(\bs x^k),\bs x^*-\bs x^k\rangle+4\lambda_k^2\normsq{\FF(\bs x^k)}+4\lambda_k^2\EEk{\normsq{\epsilon^k}}\\
\end{aligned}
\end{equation}
Thank to Lemma \ref{lemma_RS}, we conclude that $(\bar{\bs x}^k)_{k\in\NN}$ and $(\bs x^k)_{k\in\NN}$ are bounded sequence and that they have a cluster point, that is, $\bar{\bs x}^k\to\bar x$ and $\bs x^k\to x$. Since $\bar{\bs x}^{k} =(1-\delta) \bs x^k+\delta\bar{\bs x}^{k-1}$, taking the limit, we obtain that $\bar x=x$. Moreover, since $\langle \FF(\bs x^k),\bs x^*-\bs x^k\rangle\leq0$, again by Lemma \ref{lemma_RS}, we obtain that $\langle \FF(\bs x),\bs x-\bs x^*\rangle=0$ which, for the cut property, implies that $\bs x$ is a solution.
\end{proof}

\section{Proofs of Section \ref{sec_cor}}\label{app_sec_cor}

\begin{proof}[Proof of Corollary \ref{cor_weak}] We use the weak sharpness property in \eqref{sdet4} to obtain
\begin{equation*}
\begin{aligned}
&\EEk{\tfrac{1}{1-\delta}\normsq{\bar{\bs x}^{k+1}-\bs x^*}}+\EEk{\delta\normsq{\bs x^{k+1}-\bar{\bs x}^k}}\leq\\
&\leq\tfrac{1}{1-\delta}\normsq{\bar{\bs x}^k-\bs x^*}+\delta^2\normsq{x^k-\bar {\bs x}^{k-1}}+4\lambda_k^2\EEk{\normsq{\epsilon^k}}\\
&-2\lambda_kc\min_{\bs x^*\in \op{SOL}(\FF,\Omega)}\norm{\bs x-\bs x^*}+4\lambda_k^2\normsq{\FF(\bs x^k)}\\
\end{aligned}
\end{equation*}
Applying Lemma \ref{lemma_RS}, $(\bar{\bs x}^k)_{k\in\NN}$ and $(\bs x^k)_{k\in\NN}$ are bounded sequences and they have a cluster point $\bar{\bs x}$. Moreover, $\min_{\bs x^*\in \op{SOL}(\FF,\Omega)}\norm{\bs x-\bs x^*}\to0$ and $\norm{\bar{\bs x}-\bs x^*}=0$, that is, $\bar{\bs x}$ is a solution.
\end{proof}
\begin{proof}[Proof of the statement in Remark \ref{remark_acute}] We apply Lemma \ref{lemma_RS} to \eqref{sdet4}. Therefore, $(\bar{\bs x}^k)_{k\in\NN}$ and $(\bs x^k)_{k\in\NN}$ are bounded sequences and that they have a cluster point $\bar{\bs x}$. Moreover, $\langle \FF(\bs x^k),\bs x^*-\bs x^k\rangle\to0$ and $\langle \FF(\bs x),\bs x-\bs x^*\rangle=0$ but this contradicts the acute angle property. Therefore $\bar{\bs x}$ must be a solution.
\end{proof}

\balance
\bibliographystyle{IEEEtran}
\bibliography{Biblio}

%
%
%
%
%
%

\ifCLASSOPTIONcaptionsoff
  \newpage
\fi

%
%

\begin{IEEEbiography}[{\includegraphics[scale=.37]{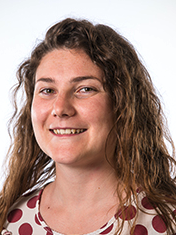}}]{Barbara Franci} is a PostDoc at the Delft Center for Systems and Control, Delft University of Technology, Delft, The Netherlands. 

She received the Bachelor's and Master's degree in Pure Mathematics from University of Siena, Siena, Italy, respectively in 2012 and 2014. Then, she received her PhD from Politecnico of Turin and University of Turin, Turin, Italy, in 2018. In September - December 2016 she visited the Department of Mechanical Engineering, University of California, Santa Barbara, USA.
She was awarded in 2017 with the Quality Award by the Academic Board of Politecnico di Torino.
\end{IEEEbiography}

\begin{IEEEbiography}[{\includegraphics[scale=.37]{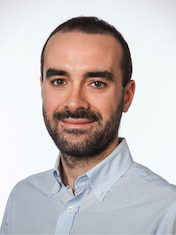}}]{Sergio Grammatico} is an Associate Professor at the Delft Center for Systems and Control, Delft University of Technology, The Netherlands. 

Born in Italy, in 1987, he received the Bachelor's degree (summa cum laude) in Computer Engineering, the Master's degree (summa cum laude) in Automatic Control Engineering, and the Ph.D. degree in Automatic Control, all from the University of Pisa, Italy, in February 2008, October 2009, and March 2013 respectively. He also received a Master's degree (summa cum laude) in Engineering Science from the Sant'Anna School of Advanced Studies, Pisa, Italy, in November 2011. 
In February-April 2010 and in November-December 2011, he visited the Department of Mathematics, University of Hawaii at Manoa, USA; in January-July 2012, he visited the Department of Electrical and Computer Engineering, University of California at Santa Barbara, USA. 

In 2013-2015, he was a post-doctoral Research Fellow in the Automatic Control Laboratory, ETH Zurich, Switzerland. In 2015-2018, he was an Assistant Professor, first in the Department of Electrical Engineering, Control Systems, TU Eindhoven, The Netherlands, then at the Delft Center for Systems and Control, TU Delft, The Netherlands. 
He was awarded a 2005 F. Severi B.Sc. Scholarship by the Italian High-Mathematics National Institute, and a 2008 M.Sc. Fellowship by the Sant’Anna School of Advanced Studies. He was awarded 2013 and 2014 “TAC Outstanding Reviewer” by the Editorial Board of the IEEE Trans. on Automatic Control, IEEE Control Systems Society. He was recipient of the Best Paper Award at the 2016 ISDG International Conference on Network Games, Control and Optimization.
\end{IEEEbiography}





\end{document}